\newcommand{\liangbing}[1]{{\color{red} Liangbing says: #1}}
\newtheorem{theorem}{Theorem}[section]
\newtheorem{notation}[theorem]{Notation}
\newtheorem{proposition}[theorem]{Proposition}
\newtheorem{corollary}[theorem]{Corollary}
\newtheorem{lemma}[theorem]{Lemma}
\newtheorem{remark}[theorem]{Remark}
\newtheorem*{acknowledgement}{Acknowledgement}
\theoremstyle{definition}
\newtheorem{definition}[theorem]{Definition}
\numberwithin{equation}{section}
\begin{document}

\title[Log Sobolev on homogeneous spaces]{Logarithmic Sobolev inequalities on homogeneous spaces}

\author[Maria Gordina]{Maria Gordina{$^{\dag}$}}
\thanks{\footnotemark {$\dag$} Research was supported in part by NSF grant DMS-2246549. The author acknowledges the support by the Hausdorff Center of Mathematics (Bonn, Germany) and the IHES (France), where parts of the work were completed.}
\address{$^{\dag}$ Department of Mathematics\\
University of Connecticut\\
Storrs, CT 06269,  U.S.A.}
\email{maria.gordina@uconn.edu}

\author[Liangbing Luo]{Liangbing Luo}
\address{Department of Mathematics\\
Lehigh University\\
Bethlehem, PA 18015,  U.S.A.}
\email{lil522@lehigh.edu}


\maketitle

\begin{abstract}
We consider sub-Riemannian manifolds which are homogeneous spaces equipped with a natural sub-Riemannian structure induced by a transitive action by a Lie group. In such a setting, the corresponding  sub-Laplacian is not an elliptic but a hypoelliptic operator. We study logarithmic Sobolev inequalities with respect to the hypoelliptic heat kernel measure on such homogeneous spaces. We show that the logarithmic Sobolev constant can be chosen to depend only on the Lie group acting transitively on such a homogeneous space but the constant is independent of the action of its isotropy group. This approach allows us to track the dependence of the logarithmic Sobolev constant on the geometry of the underlying space, in particular we are able to show that the logarithmic Sobolev constants is independent of the dimension of the underlying spaces in several examples. We illustrate the results by considering the Grushin plane, non-isotropic Heisenberg groups, Heisenberg-like groups, Hopf fibration, $\operatorname{SO}(3)$, $\operatorname{SO}(4)$, and compact Heisenberg manifolds.
\end{abstract}

\tableofcontents

\section{Introduction}

The logarithmic Sobolev inequality  has been first introduced and studied by L. Gross in \cite{Gross1975c} on a Euclidean space with respect to the Gaussian measure, and since then it found many applications. In particular, many existing results concern the question on how the constant in the logarithmic Sobolev inequality depends on the geometry of the underlying space, mostly in the Riemannian manifold setting, see for example \cite[Section 5.7]{BakryGentilLedouxBook}. The logarithmic Sobolev constant in that case  depends on the Ricci lower bound while it is independent of the dimension. This functional inequality is closely related to many important properties of the corresponding Markov semigroup such as hypercontractivity. Moreover, the fact that the logarithmic Sobolev constant often does not depend on the dimension makes it applicable in  infinite-dimensional settings.

Such results in the Riemannian setting rely on ellipticity of the Laplace-Beltrami operator as well as on geometric methods such as a curvature-dimension inequality, or different versions of  $\Gamma$ calculus. It is natural to consider logarithmic Sobolev inequalities on sub-Riemannian manifolds, which are curved spaces with more degeneracies than Riemannian manifolds. However, there are some fundamental difficulties.  The corresponding Laplacians are not elliptic operators but hypoelliptic which makes analysis more challenging. In addition, the Riemannian curvature-dimension condition is not available in general. While recently such geometric methods have been developed for some sub-Riemannian manifolds starting with \cite{BaudoinGarofalo2017}, they are not easily applicable to general sub-Riemannian manifolds. 

Logarithmic Sobolev inequalities in the sub-Riemannian setting have been studied for a number of examples such as isotropic and non-isotropic Heisenberg groups. There are different approaches to study the inequality, we refer only to the most relevant publications 
\cite{
BakryBaudoinBonnefontChafai2008, 
BaudoinBonnefont2012,
BonnefontChafaiHerry2020,
DagherZegarlinski2022a,
Eldredge2010, 
FrankLieb2012,
GordinaLuo2022, 
HebischZegarlinski2010, 
LiHong-Quan2006,  
Zhang-Y2021note}. 

In the current paper, we study logarithmic Sobolev inequalities on a larger class of sub-Riemannian manifolds than Lie groups. Namely, we consider homogeneous spaces with a natural sub-Riemannian structure inherited from that of their corresponding transitive acting Lie group. Note that such spaces include the class of sub-Riemannian homogeneous spaces considered in \cite{Strichartz1986}, and we comment on this terminology in more detail in Section~\ref{sec.SubRiemannianStructureonHomogoneousSpaces}. We then study logarithmic Sobolev inequalities with respect to a hypoelliptic heat kernel measure on such spaces. Also we discuss how the logarithmic Sobolev constant depends on the geometry of the underlying space. In Theorem~\ref{thm.LSIonHomogeneousSpace} we show that the logarithmic Sobolev constant only depends on the Lie group acting transitively on such a homogeneous space but it is independent of the action of its isotropy group. That is, if the same Lie group acts transitively on two homogeneous spaces, then these spaces might satisfy a logarithmic Sobolev inequality with the same constant. For some examples, this method allows us to show  that the logarithmic Sobolev constant does not depend on the dimension of the underlying spaces.

The motivation for our approach comes from the work of L.~Gross in the Riemannian setting. In \cite{Gross1992}, the connection between logarithmic Sobolev inequalities with respect to heat kernel measures on a connected Lie group and quotient spaces was studied. This is the starting point of our study in the sub-Riemannian setting. 

Our approach relies on the symmetry of the underlying space as expressed by the group action, which enables us to reduce the study of the logarithmic Sobolev inequality to that of a quotient space. This approach has been used for the heat kernel analysis on homogeneous spaces. For example, in \cite{DriverGrossSaloff-Coste2010}, B.~Driver, L.~Gross and L.~Saloff-Coste used it to prove that the Taylor map on complex manifolds is unitary. In \cite{Luo2023}, this approach can be applied to study Poincar\'e inequalities on homogeneous spaces. For homogeneous spaces which are Lie groups, tensorization and projection was used in \cite{BaudoinGordinaSarkar2023} to study other functional inequalities. 

Our paper is organized as follows. We first review basics of sub-Riemannian geometry and describe our setting in Section~\ref{sec.Preliminaries}. Then  in Section~\ref{sec.HomogeneousSpaces} we deduce a logarithmic Sobolev inequality with respect to the heat kernel measure on a homogeneous space equipped with a natural sub-Riemannian structure. Finally we present some examples which our main result applies to in Section~\ref{sec.Examples}, \ref{sec.ExamplesNilmanifold} and \ref{sec.ExampleSU(2)Type},  including the Grushin plane, non-isotropic Heisenberg groups, Heisenberg-like groups, Hopf fibration, $\operatorname{SO}(3)$, $\operatorname{SO}(4)$, compact Heisenberg manifolds.

\section{Preliminaries} \label{sec.Preliminaries}

In this paper, we consider some new examples of sub-Riemannian manifolds. They are connected homogeneous spaces that can be equipped with a natural sub-Riemannian structure inherited from a class of connected Lie groups. We first review some standard definitions in sub-Riemannian geometry, and then, we describe how a natural left-invariant sub-Riemannian structure on such a connected Lie group is constructed. After that, we will explain how this class of Lie groups can give rise to some new examples of sub-Riemannian manifolds, which are homogeneous spaces with a natural sub-Riemannian structure.

\subsection{Sub-Riemannian manifolds}

Let $M$ be an $n$-dimensional connected smooth manifold and we denote by $TM$ its tangent bundle. 
Suppose $\mathcal{H}$ is a smooth sub-bundle of $TM$.

\begin{definition} [H\"ormander's condition] 
If each fiber $\mathcal{H}_p$ of the sub-bundle $\mathcal{H}$ at every $p\in M$ has dimension $m$ for some $m\leqslant n$, then we call $\mathcal{H}$ a \emph{horizontal distribution} on $M$. We say that the distribution $\mathcal{H}$ satisfies \emph{H\"ormander's condition} if any local frame of $\mathcal{H}$ together with their finitely iterated Lie brackets span the tangent bundle $TM$. 
\end{definition}

\begin{definition}
Suppose $M$ together with its horizontal distribution $\mathcal{H}$ satisfies the H\"ormander's condition. If each fiber $\mathcal{H}_p$ for every $p\in M$ has dimension $m$ and is equipped with an inner product $\langle\cdot,\cdot\rangle_{\mathcal{H}}$ which varies smoothly between fibers, then the inner product $\langle\cdot,\cdot\rangle_{\mathcal{H}}$ is called a \emph{sub-Riemannian metric} and the triple $\left(M,\mathcal{H},\langle\cdot,\cdot\rangle_{\mathcal{H}}\right)$ is called a \emph{sub-Riemannian manifold of rank $m$}. 
\end{definition}
Note that when $m=n$, then $M$ is a Riemannian manifold.

\begin{definition} 
Sections of $\mathcal{H}$ are called \emph{horizontal vector fields}, and curves on $M$ whose velocity vectors are horizontal are called \emph{horizontal curves}. The \emph{length} of a horizontal curve $\gamma: [a, b] \longrightarrow M$ is defined to be
\[
l_{\mathcal{H}}\left( \gamma \right)=\int_{a}^{b} \sqrt{\langle\gamma^{\prime}(t),\gamma^{\prime}(t)\rangle_{\mathcal{H}}} dt.
\]
If $\gamma$ is not horizontal we define $l_{\mathcal{H}}\left( \gamma \right)=\infty$.

The \emph{Carnot-Carath\'{e}odory distance} between $p_{1}, p_{2} \in M$ is defined as

\begin{align}\label{df.CCdistance}
d_{CC}(p_{1}, p_{2}):=\inf \left\{ l_{\mathcal{H}}\left( \gamma \right): \gamma\left( a \right)=p_{1}, \gamma\left( b \right)=p_{2} \right\}.
\end{align}
\end{definition}
The Chow-Rashevsky theorem asserts  that H\"{o}rmander's condition implies that any two points in $M$ can be joined by a horizontal path, therefore  $d_{CC}(p_{1}, p_{2})$ is finite for any $p_{1}, p_{2} \in M$. For more details, we refer to \cite{AgrachevBarilariBoscainBook2020, MontgomeryBook2002} et al. Moreover, $M$ is a metric space with respect to the Carnot-Carath\'{e}odory distance (see \cite{Caratheodory1909}) and the topology on $M$ induced by $d_{CC}$ agrees with the original manifold topology of $M$ by \cite[Theorem 2.1.3]{MontgomeryBook2002}.

It is not clear whether $\left(M, d_{CC}\right)$ is a complete metric space in general. If a sub-Riemannian manifold $M$ is compact as a metric space, then it is complete since it is true for metric spaces. A characterization of $\left(M, d_{CC}\right)$ being complete is given in \cite[Theorem 7.3]{Strichartz1986}.

\begin{definition} \label{df.HorizontalGradient}
For any $f\in C^{\infty}(M)$, its \emph{horizontal gradient} $\nabla_{\mathcal{H}}f$ is a horizontal vector field such that for any $X\in\mathcal{H}$,
\begin{align*}
\langle \nabla_{\mathcal{H}}f,X\rangle_{\mathcal{H}}=Xf.
\end{align*}
\end{definition}

If for every $p\in M$, there is a neighborhood $U$ of $p$ and a collection of smooth vector fields $\{X_1,\cdots,X_m\}$ defined on $U$ such that they are orthonormal with respect to the sub-Riemannian metric $\langle\cdot,\cdot\rangle_{\mathcal{H}}$, then the horizontal gradient $\nabla_{\mathcal{H}}f$ has the form
\begin{align*}
\nabla_{\mathcal{H}}f=\sum_{i=1}^m(X_if)X_i.
\end{align*}

\begin{definition}
A second order differential operator $\Delta_{\mathcal{H}}$ defined on $C^{\infty}(M)$ is called a \emph{sub-Laplacian} if for every $p\in M$, there is a neighborhood $U$ of $p$ and a collection of smooth vector fields $\{X_0,X_1,\cdots,X_m\}$ defined on $U$ such that $\{X_1,\cdots,X_m\}$ are orthonormal with respect to the sub-Riemannian metric $\langle\cdot,\cdot\rangle_{\mathcal{H}}$ and
\begin{align*}
\Delta_{\mathcal{H}}:=\sum_{i=1}^m(X_i)^2+X_0.
\end{align*}
\end{definition}

By the classical result in \cite{Hormander1967a}, H\"{o}rmander's condition implies that any sub-Laplacian is hypoelliptic. Furthermore, any sub-Laplacian is a diffusion operator which is locally subelliptic, e.~g. \cite{JerisonSanchez-Calle1987, FeffermanSanchez-Calle1986}

Here we would like to comment on the choice of a reference measure in sub-Riemannian geometry. As there is no canonical reference measure such as the Riemannian volume for Riemannian manifolds, the analysis related to the sub-Laplacian might depend on such a choice. Standard choices of measures in sub-Riemannian geometry include Popp's measure (see \cite[Section 10.6]{MontgomeryBook2002} for details) and the Hausdorff measure (see \cite[Section 2.8]{MontgomeryBook2002} for details). We discuss this issue in Section~\ref{s.MeasuresHS} and in several concrete cases in Section~\ref{sec.Examples}. 
While it is an interesting problem to study how analytic properties of the heat semigroup with the sub-Laplacian as its infinitesimal generator depend on the reference measure, this is not the main focus of this paper.

\subsection{Dirichlet forms on sub-Riemannian manifolds} \label{sec.DirichletFormSubRiemannian}

Let $M$ be an $n$-dimensional sub-Riemannian manifold and $\mu$ a smooth non-vanishing measure on $M$ such that
\begin{align} \label{eqn.MeasureSubRiemannian}
d\mu= \rho dx_1\wedge \cdots \wedge dx_n
\end{align}
where $\rho \in C^{\infty}(M)$ with $\rho>0$ and $x_1,\cdots,x_n$ are local coordinates on $M$. Let 
\begin{align*}
\mathcal{E}^0_{\mu}(f, h):=\int_{M} \left\langle \nabla_{\mathcal{H}}f, \nabla_{\mathcal{H}}h \right\rangle_{\mathcal{H}} d\mu
\end{align*}
for any $f, h \in C^{\infty}_c(M)$. We denote $\mathcal{E}^0_{\mu}(f):=\mathcal{E}^0_{\mu}(f, f)$. 
The bilinear form  $\mathcal{E}^0_{\mu}$ can be extended to a Dirichlet form by 
\cite[p.346-347]{Varopoulos1988a}, which we include below for completeness.

\begin{theorem} [\cite{Varopoulos1988a}, pp.346-347]\label{thm.DirichletFormSubRiemannian}
Suppose the measure $\mu$ is given by \eqref{eqn.MeasureSubRiemannian}, then the bilinear form $\mathcal{E}^0_{\mu}$ is closable on $L^2(M, d\mu)$ with respect to the norm $\Vert \cdot\Vert_{\mathcal{E}_{\mu}}:=\Vert \cdot\Vert_{L^2(M, d\mu)}+\left(\mathcal{E}^0_{\mu}(\cdot)\right)^{\frac{1}{2}}$. Its closure $\mathcal{E}_{\mu}$ together with its domain $\mathcal{D}\left(\mathcal{E}_{\mu}\right) \subseteq L^2(M, d\mu)$ is a Dirichlet form on $L^2(M, d\mu)$.
\end{theorem}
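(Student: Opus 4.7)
The plan is to identify $\mathcal{E}^0_\mu$ as the quadratic form associated with a symmetric nonpositive operator on $L^2(M, d\mu)$, deduce closability from abstract symmetric operator theory, and then verify the Markov property via the chain rule for the horizontal gradient.

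First I would construct a symmetric sub-Laplacian adapted to $\mu$. For any smooth vector field $X$, define the $\mu$-divergence $\mathrm{div}_\mu(X)$ by $\mathcal{L}_X \mu = \mathrm{div}_\mu(X)\,\mu$; since $\mu = \rho\, dx_1 \wedge \cdots \wedge dx_n$ with $\rho \in C^\infty(M)$ strictly positive, $\mathrm{div}_\mu(X)$ is smooth. Integration by parts gives, for $f, h \in C_c^\infty(M)$ supported in a coordinate chart with a local orthonormal frame $\{X_1,\dots,X_m\}$ of $\mathcal{H}$,
\[
\int_M (X_i f)\, h\, d\mu = -\int_M f\, (X_i h)\, d\mu - \int_M f h\, \mathrm{div}_\mu(X_i)\, d\mu.
\]
Summing in $i$ yields $\mathcal{E}^0_\mu(f,h) = -\int_M f\, L h\, d\mu$ where $L = \sum_i X_i^2 + \sum_i \mathrm{div}_\mu(X_i) X_i$. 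Although the expression is local and frame-dependent, the weak identity $\mathcal{E}^0_\mu(f,h) = -\int_M f\, Lh\, d\mu$ globalizes via a partition of unity subordinate to frame domains, defining $L$ unambiguously on $C_c^\infty(M)$ as a densely defined symmetric operator.

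Closability of $\mathcal{E}^0_\mu$ is then an immediate consequence of the fact that any nonnegative symmetric quadratic form densely defined on a Hilbert space whose associated operator is symmetric admits a closure (equivalently, by the Friedrichs extension). Concretely, if $f_n \in C_c^\infty(M)$ with $f_n \to 0$ in $L^2(M,d\mu)$ and $\mathcal{E}^0_\mu(f_n - f_m) \to 0$, then for every $h \in C_c^\infty(M)$,
\[
\mathcal{E}^0_\mu(f_n, h) = -\int_M f_n\, L h\, d\mu \longrightarrow 0,
\]
and a standard Cauchy/polarization argument forces $\mathcal{E}^0_\mu(f_n) \to 0$, yielding closability with respect to the norm $\|\cdot\|_{\mathcal{E}_\mu}$.

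For the Markov property of the closure $\mathcal{E}_\mu$, we need that for every normal contraction $\phi$ (with $\phi(0)=0$ and $|\phi(s)-\phi(t)| \leq |s-t|$) and every $f \in \mathcal{D}(\mathcal{E}_\mu)$, one has $\phi\circ f \in \mathcal{D}(\mathcal{E}_\mu)$ with $\mathcal{E}_\mu(\phi\circ f) \leq \mathcal{E}_\mu(f)$. On smooth functions the chain rule gives $\nabla_\mathcal{H}(\phi \circ f) = \phi'(f) \nabla_\mathcal{H} f$, whence $|\nabla_\mathcal{H}(\phi\circ f)|_\mathcal{H} \leq |\nabla_\mathcal{H} f|_\mathcal{H}$ pointwise. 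For a general normal contraction $\phi$ one approximates by mollified contractions $\phi_\varepsilon \in C^\infty$ with $|\phi_\varepsilon'| \leq 1$, applies the smooth estimate, and passes to the limit using the lower semicontinuity of $\mathcal{E}_\mu$ along with dominated convergence. The Markov property then extends from $C_c^\infty(M)$ to all of $\mathcal{D}(\mathcal{E}_\mu)$ by the density of $C_c^\infty(M)$ in the closure.

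The main obstacle is the final approximation step: extending the chain rule estimate from $C_c^\infty(M)$ to general elements of $\mathcal{D}(\mathcal{E}_\mu)$ under non-smooth normal contractions. In the Riemannian setting this is handled by classical Sobolev truncation, but in the sub-Riemannian setting one must work purely with the intrinsic horizontal gradient and rely on mollification of $\phi$ together with lower semicontinuity of $\mathcal{E}_\mu$ — this is precisely the point where Varopoulos' smoothness and positivity assumptions on $\rho$ enter, ensuring that $L$ has smooth coefficients and no pathologies obstruct the approximation.
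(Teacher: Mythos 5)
Your argument is correct, but note that the paper does not actually prove this statement: it is quoted verbatim from Varopoulos (pp.~346--347), and your proof is essentially the standard one underlying that reference --- integration by parts against $d\mu=\rho\,dx$ produces a symmetric operator $L=\sum_i X_i^2+\sum_i\operatorname{div}_\mu(X_i)X_i$ on $C_c^\infty(M)$ with $\mathcal{E}^0_\mu(f,h)=-\int_M f\,Lh\,d\mu$, closability follows from this representation, and the Markov property follows from the chain rule applied to the smoothed contractions $\phi_\varepsilon$ together with \cite[Theorem 3.1.1]{FukushimaOshimaTakedaBook2011} (the same device the paper itself uses later in the proof of Theorem~\ref{thm.DirichletFormRelation}). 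One small correction of emphasis: the smoothness and strict positivity of $\rho$ are used where you form $\operatorname{div}_\mu(X_i)=\operatorname{div}(X_i)+X_i\rho/\rho$, guaranteeing that $L$ maps $C_c^\infty(M)$ into $L^2(M,d\mu)$ so the duality pairing in your closability step makes sense, rather than in the normal-contraction approximation, which is measure-independent.
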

From the construction $\mathcal{E}^0_{\mu}$ and $\mathcal{E}_{\mu}$, we see that $C^{\infty}_c(M)$ is dense in $\mathcal{D}\left(\mathcal{E}_{\mu}\right)$ under the norm $\Vert \cdot\Vert_{\mathcal{E}_{\mu}}$.

\subsection{Logarithmic Sobolev inequalities on sub-Riemannian manifolds} 

\begin{notation}\label{nota.LSI}
We say that $M$ satisfies a logarithmic Sobolev inequality with the constant $C\left(M,\mathcal{H},\mu\right)$ if
\begin{align} \label{LSI}
\int_{M}f^2\log f^2d\mu-\left(\int_{M}f^2 d\mu\right)\log\left(\int_{M}f^2d\mu\right) \leqslant C\left(M,\mathcal{H}, \mu\right) \mathcal{E}_{\mu}(f)
\end{align}
for any $f\in \mathcal{D}\left(\mathcal{E}_{\mu}\right)$. In such a case we also say that $LSI_C(M,\mathcal{H},\mathcal{D}\left(\mathcal{E}_{\mu}\right),\mu)$ holds. 
\end{notation}

To show that a logarithmic Sobolev inequality holds for all functions from $\mathcal{D}\left(\mathcal{E}_{\mu}\right)$, it is enough to show such an inequality for functions from  $C^{\infty}_c(M)$. This is because $C^{\infty}_c(M)$ is dense in $\mathcal{D}\left(\mathcal{E}_{\mu}\right)$ with respect to the norm $\Vert \cdot\Vert_{\mathcal{E}_{\mu}}$, which comes from  the closability of $\mathcal{E}^0_{\mu}$. For completeness, we include the proof here.

\begin{proposition} \label{prop.LimitInLSI}
If \eqref{LSI} holds for functions from $C^{\infty}_c(M)$, then $LSI_C\left(M,\right.
\\
\left.\mathcal{H},\mathcal{D}\left(\mathcal{E}_{\mu}\right),\mu\right)$ holds too.
\end{proposition}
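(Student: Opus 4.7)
The plan is to reduce the claim to the density of $C_c^\infty(M)$ in $\mathcal{D}(\mathcal{E}_\mu)$ under $\Vert\cdot\Vert_{\mathcal{E}_\mu}$, which was recorded just after Theorem~\ref{thm.DirichletFormSubRiemannian}, and then pass to the limit in the already-established LSI for smooth compactly supported test functions. Given $f\in\mathcal{D}(\mathcal{E}_\mu)$, choose $f_n\in C_c^\infty(M)$ with $\Vert f_n-f\Vert_{\mathcal{E}_\mu}\to 0$. This immediately yields $f_n\to f$ in $L^2(M,d\mu)$, and the reverse triangle inequality for the seminorm $\mathcal{E}_\mu^{1/2}$ gives $\mathcal{E}_\mu(f_n)\to\mathcal{E}_\mu(f)$. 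After extracting a subsequence we may also assume $f_n\to f$ pointwise $\mu$-almost everywhere. The right-hand side of \eqref{LSI} therefore passes cleanly to the limit, and the remaining task is to show
\begin{equation*}
\operatorname{Ent}_\mu(f^2)\leqslant \liminf_{n\to\infty}\operatorname{Ent}_\mu(f_n^2),
\qquad
\operatorname{Ent}_\mu(g^2):=\int_M g^2\log g^2\,d\mu-\Vert g\Vert_{L^2}^2\log\Vert g\Vert_{L^2}^2.
\end{equation*}

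The main obstacle is that $x\mapsto x\log x$ is unbounded, so $L^2$-convergence alone does not control the entropy by dominated convergence. The standard remedy is to exploit nonnegativity via the convex function $\phi(x)=x\log x-x+1$, which satisfies $\phi(x)\geqslant 0$ for $x\geqslant 0$ (with $\phi(1)=0$). Assuming $\Vert f\Vert_{L^2}>0$ (the case $f=0$ is trivial, since then $\mathcal{E}_\mu(f)=0$ and with the convention $0\log 0=0$ both sides of \eqref{LSI} vanish), set
\begin{equation*}
h_n:=\frac{f_n^2}{\Vert f_n\Vert_{L^2}^2},\qquad h:=\frac{f^2}{\Vert f\Vert_{L^2}^2},
\end{equation*}
which are well-defined probability densities for large $n$ because $\Vert f_n\Vert_{L^2}\to\Vert f\Vert_{L^2}>0$. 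Then $h_n\to h$ almost everywhere, $\phi(h_n)\geqslant 0$, and Fatou's lemma gives
\begin{equation*}
\int_M\phi(h)\,d\mu\leqslant\liminf_{n\to\infty}\int_M\phi(h_n)\,d\mu.
\end{equation*}
Since $\int_M h_n\,d\mu=\int_M h\,d\mu=1$, the constant and linear terms in $\phi$ cancel, so this inequality is simply $\int_M h\log h\,d\mu\leqslant\liminf_{n\to\infty}\int_M h_n\log h_n\,d\mu$.

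Finally, a direct computation shows $\operatorname{Ent}_\mu(g^2)=\Vert g\Vert_{L^2}^2\int_M (g^2/\Vert g\Vert_{L^2}^2)\log(g^2/\Vert g\Vert_{L^2}^2)\,d\mu$, so multiplying the Fatou bound by $\Vert f\Vert_{L^2}^2$ and using $\Vert f_n\Vert_{L^2}^2\to\Vert f\Vert_{L^2}^2$ yields $\operatorname{Ent}_\mu(f^2)\leqslant\liminf_{n\to\infty}\operatorname{Ent}_\mu(f_n^2)$. Applying \eqref{LSI} to each $f_n\in C_c^\infty(M)$ and using $\mathcal{E}_\mu(f_n)\to\mathcal{E}_\mu(f)$ now gives
\begin{equation*}
\operatorname{Ent}_\mu(f^2)\leqslant\liminf_{n\to\infty}C(M,\mathcal{H},\mu)\,\mathcal{E}_\mu(f_n)=C(M,\mathcal{H},\mu)\,\mathcal{E}_\mu(f),
\end{equation*}
which is \eqref{LSI} for the arbitrary $f\in\mathcal{D}(\mathcal{E}_\mu)$. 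The only subtlety worth flagging is the lower semicontinuity step; everything else is routine bookkeeping.
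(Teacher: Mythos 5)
Your proposal is correct and follows the same overall strategy as the paper's proof: approximate $f\in\mathcal{D}(\mathcal{E}_{\mu})$ by $f_k\in C^{\infty}_c(M)$ in the $\Vert\cdot\Vert_{\mathcal{E}_{\mu}}$-norm (using closability), pass to an a.e.-convergent subsequence, and take limits in \eqref{LSI}. The difference is one of completeness rather than of route. The paper's proof simply asserts that "taking the limit" in the inequality for $f_k$ yields the result because $\mathcal{E}_{\mu}$ is closed; closedness handles the right-hand side, but it says nothing about why the entropy term converges, and $x\mapsto x\log x$ is not controlled by $L^2$-convergence alone. You correctly identify this as the real issue and supply the missing step: normalizing to probability densities $h_k=f_k^2/\Vert f_k\Vert_{L^2}^2$, applying Fatou's lemma to the nonnegative convex function $\phi(x)=x\log x-x+1$, and using $\int h_k\,d\mu=\int h\,d\mu=1$ to cancel the affine part, which gives exactly the lower semicontinuity $\operatorname{Ent}_{\mu}(f^2)\leqslant\liminf_k\operatorname{Ent}_{\mu}(f_k^2)$ needed to close the argument. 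One small caveat: the cancellation of the constant term in $\phi$ uses $\int_M 1\,d\mu=\mu(M)<\infty$, so your argument as written requires $\mu$ to be a finite measure. This is harmless for the paper's purposes, since the proposition is applied to hypoelliptic heat kernel measures, which are probability measures, but since Section~\ref{sec.DirichletFormSubRiemannian} allows a general smooth non-vanishing $\mu$ as in \eqref{eqn.MeasureSubRiemannian}, it would be worth stating the finiteness hypothesis explicitly (for an infinite $\mu$ the functional $\int_M\phi(h_k)\,d\mu$ is identically $+\infty$ off the support of $f_k$ and the Fatou step degenerates).
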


\begin{proof}
The closability of $\mathcal{E}^0_{\mu}$ defined on $C^{\infty}_c(M)$ implies that $C^{\infty}_c(M)$ is dense in $\mathcal{D}\left(\mathcal{E}_{\mu}\right)$ under the norm $\Vert \cdot\Vert_{\mathcal{E}_{\mu}}$. In this way, for any $f\in\mathcal{D}\left(\mathcal{E}_{\mu}\right)$, there exists a sequence $\{f_k\}_{k=1}^{\infty
} \subseteq C^{\infty}_c(M)$ such that $\Vert f_k\Vert_{L^2(M, d\mu)}\to \Vert f\Vert_{L^2(M, d\mu)}$ and $\mathcal{E}_{\mu}(f_k)\to \mathcal{E}_{\mu}(f)$ as $k\to\infty$. Then we can take a subsequence (still denoted by $\{f_k\}_{k=1}^{\infty
}$) such that $f_k\to f$ for $\mu$-a.e. as $k\to\infty$. Taking the limit as $k\to\infty$ in the following logarithmic Sobolev inequality for $f_k$
\begin{align*} 
\int_{M}(f_k)^2\log (f_k)^2d\mu-\left(\int_{M}(f_k)^2 d\mu\right)\log\left(\int_{M}(f_k)^2d\mu\right) \leqslant C\left(M,\mathcal{H}, \mu\right) \mathcal{E}_{\mu}(f_k),
\notag
\end{align*}
we obtain the desired result since the form $\mathcal{E}_{\mu}$ on $L^2(M, d\mu)$ is closed.
\end{proof}

\subsection{Left-invariant sub-Riemannian structure on a Lie group $G$} \label{sec.LieGroupSubRiemannian}

Let $G$ be an $(n+m)$-dimensional connected Lie group. We identify its Lie algebra $\mathfrak{g}$ with the tangent space $T_eG$ at the identity $e$. 

Throughout this paper, we assume that H\"ormander's condition is satisfied on $G$. That is, there exist a family of linearly independent vectors $\{X_1,\cdots,X_n\}\subseteq \mathfrak{g}$ such that their Lie brackets span the whole Lie algebra $\mathfrak{g}$. We will call $\mathcal{H}:=\operatorname{Span}\{X_1,\cdots,X_n\}$ the \emph{horizontal space}. Under the assumption of H\"ormander's condition, $G$ can be equipped with a natural left-invariant sub-Riemannian structure, which we will explain in details.

We first recall some facts about left-invariant vector fields on $G$. Recall that the exponential map $\operatorname{exp}:\mathfrak{g}\rightarrow G$ is a local diffeomorphism, see e.~g. \cite[pp. 49-50]{KnappBook1996}. We can also identify $\mathfrak{g}$ with the collection of left-invariant vector fields as follows.

\begin{notation}
For any $X\in\mathfrak{g}$, we denote  by $\widetilde{X}$ the left-invariant vector field such that $\widetilde{X}(e)=X$, that is, for any $f\in C^{\infty}(G)$, we have
\begin{align*}
\left(\widetilde{X}f\right)(g):=\left.\frac{d}{dt}\right\vert_{t=0}f\left(g\operatorname{exp}(tX)\right).
\end{align*}
\end{notation}
Next, we describe the natural left-invariant sub-Riemannian structure on $G$. 

\begin{definition}
We say that the distribution $\mathcal{H}$ is \emph{left-invariant} if
$\mathcal{H}_e$ is a linear subspace of $\mathfrak{g}$ and, at any $g\in G$, $\mathcal{H}_g$ is the left translation of $\mathcal{H}_e$. We say that the sub-Riemannian metric $\langle\cdot,\cdot\rangle_{\mathcal{H}}^G$ is \emph{left-invariant} if at any $g\in G$, $\langle\cdot,\cdot\rangle_{\mathcal{H}_g}^G$ is the left translation of $\langle\cdot,\cdot\rangle_{\mathcal{H}_e}^G$. We say that $\left(G,\mathcal{H},\langle\cdot,\cdot\rangle_{\mathcal{H}}\right)$ is a \emph{left-invariant sub-Riemannian structure} if both $\mathcal{H}$ and $\langle\cdot,\cdot\rangle_{\mathcal{H}}^G$ are left-invariant.
\end{definition}

Suppose there is an inner product $\langle\cdot,\cdot\rangle_{\mathcal{H}}$ on the horizontal space $\mathcal{H}$. Then we use the left-translation to define the sub-bundle $\mathcal{H}^{G}$ with the induced left-invariant sub-Riemannian metric $\langle \cdot, \cdot \rangle^G_{\mathcal{H}}$. That is, for any $g\in G$, the \emph{horizontal distribution} is
\begin{align*}
\mathcal{H}_g=\{\widetilde{X}(g):X\in \mathcal{H}\}
\end{align*}
and the left-invariant inner product $\left\langle\cdot,\cdot\right\rangle^{G}_{\mathcal{H}}$ is chosen in such a way that
\begin{align*}
\langle \widetilde{X}(g),\widetilde{Y}(g)\rangle^G_{\mathcal{H}_g}:=\langle X,Y\rangle_{\mathcal{H}}
\end{align*}
for any $X,Y\in\mathcal{H}\subseteq \mathfrak{g}$. Similarly, we can define the norm $\vert \cdot \vert_{\mathcal{H}}$ induced by $\langle \cdot, \cdot \rangle_{\mathcal{H}}$ on $\mathcal{H}$ and then use the left translation to define the left-invariant norm denoted by $\vert\cdot\vert_{\mathcal{H}_g}$ on $\mathcal{H}^{G}_g$ for any $g\in G$. Thus, we have obtained a natural sub-Riemannian structure $\left(G,\mathcal{H}^{G},\langle\cdot,\cdot\rangle^{G}_{\mathcal{H}}\right)$ on $G$.

Moreover, if $\{X_1,\cdots,X_n\}$ forms an orthonormal basis for $\mathcal{H}$, then left-invariant vector fields $\{\widetilde{X_1},\cdots,\widetilde{X_n}\}$ induced by $\{X_1,\cdots,X_n\}$ will be an orthonormal frame for the sub-bundle $\mathcal{H}^{G}$. 

In this setting, for any $f\in C^{\infty}(G)$, the \emph{horizontal gradient} $\nabla_{\mathcal{H}}^{G}f$ has the form
\begin{align*}
\nabla_{\mathcal{H}}^{G}f= \sum_{i=1}^{n}
(\widetilde{X_i}f)\widetilde{X_i}.
\end{align*}

In addition, the Carnot-Carath\'eodory distance $d_{CC}^G$ is well-defined on $G$ and is a left-invariant metric on $G$, that is, for any $g_{1},g_{2},g\in G$
\begin{align*}
& d_{CC}^{G}(g_{1}, g_{2})=d_{CC}^{G}((g_{2})^{-1}g_{1}, e),
\\
& d_{CC}^{G}(g^{-1},e)=d_{CC}^{G}(g,e).
\end{align*}
Also, $\left(G, d_{CC}^{G}\right)$ is a complete metric space by \cite[pp. 682]{AgrachevBarilariBoscainBook2020}.

\subsection{Sub-Laplacian and hypoelliptic heat kernel measure on Lie groups}

H\"{o}rmander's condition implies that the \emph{sub-Laplacian}
\begin{equation}\label{e.SubLaplacian}
\Delta_{\mathcal{H}}^{G}:=\sum_{i=1}^{n} \left(\widetilde{X_{i}}\right)^2
\end{equation}
is a hypoelliptic operator by the classical result in \cite{Hormander1967a}. In particular, the sub-Laplacian only depends on the sub-Riemannian metric $\langle \cdot,\cdot\rangle_{\mathcal{H}}^G$ but it is independent of the choice of orthonormal frame by \cite[Theorem 3.8]{GordinaLaetsch2016a}.

Next, we define the hypoelliptic heat kernel measure on $G$. First we choose a right-invariant Haar measure $\mu_R^G$ on $G$. The sub-Laplacian $\Delta_{\mathcal{H}}^{G}$ is essentially self-adjoint on $C_{c}^{\infty}\left( G \right)$ in $L^{2}\left( G, d\mu_R^G \right)$ by \cite[pp. 950]{DriverGrossSaloff-Coste2009a}. The corresponding semigroup denoted by $P_t^G$  admits a probability transition kernel $\mu_t^{G}\left( g, dh \right)$ such that $\mu_t^{G}\left( g, A \right)\geqslant 0$ for all Borel sets $A$ and
\[
\left( P_t^Gf \right)\left( g \right)=\int_{G} f\left( h \right) \mu_t^{G}\left( g, dh \right)
\]
for all $f \in L^{2}\left( G, dg \right)$.

As explained in \cite[p. 952]{DriverGrossSaloff-Coste2009a} the transition kernel measure $\mu_t^{\omega}\left( g, dh \right)$ admits a continuous density, $p_t^{G}\left( g, h \right)$, with respect to the right Haar measure $\mu_R^G$ and

\begin{align}\label{eqn.HeatKernelMeasureDF}
\mu_t^{G}\left( g, dh \right)=p_{t}^{G}\left( g, h \right)d\mu_R^G(h).
\end{align}
Note that the sub-Laplacian $\Delta_{\mathcal{H}}^{G}$ commutes with left translations which together with the right invariance of the right Haar measure imply that

\begin{align} \label{eqn.LeftInvariance}
p_{t}^{G}\left( g, h \right)=p_{t}^{G}\left( e, g^{-1}h \right)m(g),
\end{align}
where $m$ is the modular function defined by 
\[
\int_Gf(hg)d\mu_R^G(g)=m(h)\int_Gf(g)d\mu_R^G(g),
\] 
therefore it suffices to look at the function $p_{t}^{G}\left( e, g\right)$. From now on we use $p_{t}^{G}\left( g \right)$ to denote this function and we will refer to it as the \emph{heat kernel}.

\begin{remark}
When $G$ is unimodular, that is, $m(h)=1$ for any $h\in G$, the right Haar measure $\mu_R^G$ and the left Haar measure $\mu_L^G$ coincide, that is, the reference measure $\mu_R^G=\mu_L^G$ is bi-invariant. In particular, nilpotent groups and groups of compact type are unimodular. 
\end{remark}

\begin{definition} \label{df.HeatKernelMeasure}
We call a family of measures $\{\mu_t^{G}\}_{t>0}$ on $G$ with
\[
d\mu_t^{G}\left(  g \right)=\mu_t^{G}\left(  dg \right)=\mu_t^{G}\left( e, dg \right)=p_{t}^{G}\left( g \right)d\mu_R^G(g)
\]
the \emph{heat kernel measure}.
\end{definition}

From Definition~\ref{df.HeatKernelMeasure}, it seems that the heat kernel measure depends on the choice of the reference measure on $G$, which in our case is the right Haar measure $\mu_R^G$ on $G$. However, there is an equivalent way to describe the heat kernel measure that does not need to fix your choice of the reference measure. Namely, we have
\begin{align*}
\mu_t^G(A)=P_t^G\mathbbm{1}_A
\end{align*}
for any $A\in\mathcal{B}(G)$.

\subsection{Dirichlet form associated to the heat kernel measure on Lie groups} \label{sec.DirichletFormonG}

We now consider a Dirichlet form on a Lie group $G$ with respect to the hypoelliptic heat kernel measure $\mu_t^G$ on $G$. In particular, we can apply the approach in  Section~\ref{sec.DirichletFormSubRiemannian} to this setting. 

We define
\begin{align}
\mathcal{E}^0_{\mu_t^G}(f, h): & =\int_{G} \left\langle \nabla_{\mathcal{H}}f,\nabla_{\mathcal{H}}h\right\rangle_{\mathcal{H}} d\mu_t^G
\\
&
=\int_{G} \left\langle \nabla_{\mathcal{H}}f,\nabla_{\mathcal{H}}h\right\rangle_{\mathcal{H}} p_t^G(g)d\mu_R^G(g) \notag
\end{align}
for any $f, h \in C^{\infty}_c(G)$.

By the existence, smoothness and positivity (see \cite[Theorem 3.4 (i)]{DriverGrossSaloff-Coste2009a}) of the heat kernel $p_t^G$ on $G$, we see that the heat kernel measure $\mu_t^G$ satisfies \eqref{eqn.MeasureSubRiemannian}. By Theorem~\ref{thm.DirichletFormSubRiemannian}, the bilinear form $\mathcal{E}^0_{\mu_t^G}$ is closable on $L^2(G, d\mu_t^G)$. The closure denoted by $\mathcal{E}_{\mu_t^G}$ of $\mathcal{E}^0_{\mu_t^G}$ together with its domain $\mathcal{D}\left(\mathcal{E}_{\mu_t^G}\right) \subseteq L^2(G, d\mu_t^G)$ is a regular Dirichlet form on $L^2(G, d\mu_t^G)$.  We can choose $C^{\infty}_c(M)$ as the core of $\mathcal{E}_{\mu_t^G}$.

There is a precise description of the domain $\mathcal{D}\left(\mathcal{E}_{\mu_t^G}\right) \subseteq L^2(G, d\mu_t^G)$ of this Dirichlet form $\mathcal{E}_{\mu_t^G}$ in \cite[p.1886]{Lust-Piquard2010}. We include it here for completness. 

\begin{lemma} [p.~1886 in \cite{Lust-Piquard2010}]
The domain $\mathcal{D}\left(\mathcal{E}_{\mu_t^G}\right)$ of $\mathcal{E}_{\mu_t^G}$ has the form
\begin{align} \label{eqn.DomainWeightedDirichletForm}
\mathcal{D}\left(\mathcal{E}_{\mu_t^G}\right)=\left\{f\in L^2\left(G, d\mu_t^{G}\right):X_if\in L^2\left(G, d\mu_t^{G}\right), i=1,\cdots, n\right\}.
\end{align}
\end{lemma}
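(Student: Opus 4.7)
The plan is to prove the two set inclusions separately, with $\widetilde{X_i} f$ understood in the distributional sense whenever $f\in L^2(G,d\mu_t^G)$. Since $p_t^G$ is smooth and strictly positive by \cite[Theorem 3.4]{DriverGrossSaloff-Coste2009a}, any $L^2(G,d\mu_t^G)$-convergent sequence is also $L^2_{\mathrm{loc}}(G,d\mu_R^G)$-convergent on relatively compact sets, so one can freely pair with test functions in $C^{\infty}_c(G)$ with respect to either reference measure.

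For the forward inclusion, I would take $f\in\mathcal{D}\left(\mathcal{E}_{\mu_t^G}\right)$ and pick $\{f_k\}\subset C^{\infty}_c(G)$ with $f_k\to f$ in $L^2(\mu_t^G)$ and $\widetilde{X_i}f_k$ Cauchy in $L^2(\mu_t^G)$ for each $i=1,\ldots,n$, with limit $g_i\in L^2(\mu_t^G)$. Testing against an arbitrary $\phi\in C^{\infty}_c(G)$ and integrating by parts using the formula for a left-invariant vector field against the right Haar measure (which produces a correction involving the modular function $m$ but no boundary contribution, thanks to the compact support of $\phi$), I pass to the limit to identify $g_i$ with the distributional derivative $\widetilde{X_i}f$; hence $\widetilde{X_i}f\in L^2(\mu_t^G)$ and $f$ lies in the right-hand side.

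For the reverse inclusion I would approximate $f$ in the $\Vert\cdot\Vert_{\mathcal{E}_{\mu_t^G}}$-norm by elements of $C^{\infty}_c(G)$ via the classical truncate-cut-mollify procedure. First, replace $f$ by $\Phi_M(f)$ for a smooth bounded $\Phi_M$ with $\Phi_M(s)\to s$ and $|\Phi_M'|\leqslant 1$ as $M\to\infty$; the distributional chain rule gives $\widetilde{X_i}\Phi_M(f)=\Phi_M'(f)\widetilde{X_i}f$, and dominated convergence handles both $\Phi_M(f)\to f$ and $\widetilde{X_i}\Phi_M(f)\to\widetilde{X_i}f$ in $L^2(\mu_t^G)$. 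Second, multiply by a spatial cutoff $\chi_R(g)=\eta\!\left(d_{CC}^G(g,e)/R\right)$ with a fixed $\eta\in C^{\infty}_c([0,\infty))$ equal to $1$ near $0$; the sub-Riemannian eikonal bound $|\widetilde{X_i}d_{CC}^G(\cdot,e)|\leqslant 1$ almost everywhere gives $\Vert\widetilde{X_i}\chi_R\Vert_{\infty}\leqslant C/R$, so the Leibniz term $(\widetilde{X_i}\chi_R)f$ vanishes in $L^2(\mu_t^G)$ as $R\to\infty$ while $\chi_R\widetilde{X_i}f\to\widetilde{X_i}f$ by dominated convergence. Third, mollify the resulting bounded, compactly supported function by group convolution with a smooth approximate identity concentrated near $e\in G$; since $\widetilde{X_i}$ is left-invariant it commutes with this convolution, so the mollifications lie in $C^{\infty}_c(G)$ and converge to the previous step together with their horizontal gradients.

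The main obstacle is the cut step: it rests on the sub-Riemannian eikonal inequality $|\widetilde{X_i}d_{CC}^G(\cdot,e)|\leqslant 1$ almost everywhere together with compactness of the sublevel sets $\{d_{CC}^G(\cdot,e)\leqslant R\}$ in the manifold topology. Both follow from facts recorded in Section~\ref{sec.LieGroupSubRiemannian}: completeness of $(G,d_{CC}^G)$ and the agreement of the Carnot-Carath\'eodory topology with the original manifold topology. Once these are in place, the three-step argument combines with the closability from Theorem~\ref{thm.DirichletFormSubRiemannian} to yield $\{f\in L^2(\mu_t^G):\widetilde{X_i}f\in L^2(\mu_t^G),\,i=1,\ldots,n\}\subseteq\mathcal{D}\left(\mathcal{E}_{\mu_t^G}\right)$, completing the proof.
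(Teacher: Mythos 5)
The paper itself does not prove this lemma: it is quoted from \cite[p.~1886]{Lust-Piquard2010}, where it is established for stratified groups. Your argument is therefore a genuinely independent, self-contained route, namely a Meyers--Serrin/Friedrichs density argument for the weighted space $L^2\left(G, d\mu_t^{G}\right)$. The forward inclusion is fine: smoothness and positivity of $p_t^G$ let you pass from $L^2(\mu_t^G)$-convergence to $L^1_{\mathrm{loc}}(\mu_R^G)$-convergence on the support of a test function, and the distributional identification of the limit of $\widetilde{X_i}f_k$ goes through. One small correction there: left-invariant vector fields are divergence-free with respect to the \emph{right} Haar measure, because $g\mapsto g\exp(tX)$ is a right translation; so the integration by parts you invoke carries no modular correction at all (the correction appears only against the left Haar measure). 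The truncate--cut--mollify scheme for the reverse inclusion is the right idea, and the mollification step works provided you convolve on the correct side: it is the left convolution $(\psi\ast u)(g)=\int\psi(h)\,u(h^{-1}g)\,dh$ that commutes with left-invariant vector fields, not the right one. Since after cutting off everything is supported in a fixed compact set on which $p_t^G$ is bounded above and below, the unweighted convergence of the mollifications upgrades to convergence in $\Vert\cdot\Vert_{\mathcal{E}_{\mu_t^G}}$, and a diagonal argument over the three parameters finishes.

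The one substantive gap is in the cut step. You assert that the eikonal bound $\left\vert\nabla_{\mathcal{H}}d_{CC}^G(\cdot,e)\right\vert\leqslant 1$ a.e.\ follows from the facts recorded in Section~\ref{sec.LieGroupSubRiemannian}. It does not. Completeness of $\left(G,d_{CC}^G\right)$ together with local compactness does give compactness of the sublevel sets (Hopf--Rinow for length spaces), but the a.e.\ horizontal differentiability of $d_{CC}^G(\cdot,e)$ and the bound on its horizontal gradient are a separate, nontrivial input: the Carnot--Carath\'eodory distance is in general only H\"older with respect to the underlying smooth structure (ball--box estimates), so the Euclidean Rademacher theorem does not apply in coordinates, and one needs either a Pansu-type differentiability result or, more simply, a smooth regularized exhaustion function $\rho_R$ with compact sublevel sets and $\left\vert\nabla_{\mathcal{H}}\rho_R\right\vert\leqslant C/R$, obtained for instance by locally mollifying $d_{CC}^G$. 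This is a known and citable fact, so the issue is one of justification rather than a fatal error, but as written the key estimate of the cut step is not supported by anything recorded in the paper.
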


This Dirichlet form $\mathcal{E}_{\mu_t^G}$ is closely related to the Ornstein–Uhlenbeck process and the Ornstein–Uhlenbeck semigroup on $G$, for more details we refer to \cite{BaudoinHairerTeichmann2008, Lust-Piquard2010}.

\section{Logarithmic Sobolev inequalities on homogeneous spaces} \label{sec.HomogeneousSpaces}

\subsection{Main result}

In this section, we prove a logarithmic Sobolev inequality on the homogeneous space equipped with a natural sub-Riemannian structure induced by a transitive action by a Lie group. We start with basic definitions and present our main result, Theorem~\ref{thm.MainTheorem}.

\begin{definition}
A smooth manifold endowed with a transitive smooth action by a Lie group G is called a \emph{homogeneous
G-space} or a \emph{homogeneous space}. The \emph{isotropy group} of $p \in  M$ is defined as
$G_{p} := \left\{ g \in G: g \cdot p = p \right\}$. 
\end{definition}

\begin{theorem} \label{thm.MainTheorem}
Let $M$ be a homogeneous space with a connected Lie group $G$ acting transitively on it. Suppose that $G$ is equipped with a sub-Riemannian structure $\left(G,\mathcal{H}^{G},\langle\cdot,\cdot\rangle^{G}_{\mathcal{H}}\right)$ and the logarithmic Sobolev inequality \eqref{LSI} holds for  $f\in C^{\infty}_c(G)$ with the constant $C\left(G, \mathcal{H}^G, \mu_t^G \right)$. Then 
\begin{enumerate}
    \item There is a natural sub-Riemannian structure $\left(M,\mathcal{H}^{M}, \langle\cdot, \cdot\rangle^{M}_{\mathcal{H}}\right)$ on $M$ induced by the transitive action by $G$.
    \item There exists the hypoelliptic \emph{heat kernel measure} $\mu_t^M$ on $M$ such that the  heat equation holds.
    \item There exists a Dirichlet form $\mathcal{E}_{M}$ associated with the hypoelliptic heat kernel measure $\mu_t^M$ with the domain $\mathcal{D}\left(\mathcal{E}_{M}\right) \subseteq L^2\left(M,d\mu_t^M\right)$.
    \item The hypoelliptic logarithmic Sobolev inequality $LSI_C\left( M, \mathcal{H}^{M}, \mu_t^{M}\right)$ holds. Moreover, the constant $C\left(M, \mathcal{H}^{M}, \mu_t^{M}\right)$ can be chosen to be
    \begin{align*}
C\left(M,\mathcal{H}^{M},\mu_t^{M}\right)=C\left(G, \mathcal{H}^G,\mu^G_t\right).
    \end{align*}
\end{enumerate}
\end{theorem}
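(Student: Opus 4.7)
The strategy is to lift and project through the quotient map $\pi: G \to M$, $\pi(g) := g \cdot o$, where $o \in M$ is a fixed base point with isotropy group $K := G_o$; for $f \in C^\infty(M)$ I write its lift as $\tilde{f} := f \circ \pi \in C^\infty(G)$. For part (1), I would define $\mathcal{H}^M_p := d\pi_g(\mathcal{H}^G_g)$ for any $g \in \pi^{-1}(p)$ and verify independence of this choice using left-invariance of $\mathcal{H}^G$ together with the identity $\pi \circ R_k = \pi$ for $k \in K$; the inner product $\langle \cdot, \cdot \rangle^M_{\mathcal{H}}$ is defined as the pushforward of $\langle \cdot, \cdot \rangle^G_{\mathcal{H}}$, with well-definedness checked the same way. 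H\"ormander's condition descends to $M$ because $d\pi$ is fiberwise surjective and horizontal left-invariant vector fields on $G$ are $\pi$-related to horizontal vector fields on $M$, so iterated brackets of horizontal fields generate $TM$ from those generating $TG$.

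For parts (2) and (3), the central computation is the intertwining relation
\begin{align*}
\Delta^G_{\mathcal{H}}(\tilde{f}) = (\Delta^M_{\mathcal{H}} f) \circ \pi, \qquad f \in C^\infty(M),
\end{align*}
which follows because each horizontal vector field $\widetilde{X_i}$ on $G$ is $\pi$-related to its pushforward. This yields the semigroup intertwining $P_t^G(\tilde{f}) = (P_t^M f) \circ \pi$, from which I would set $\mu_t^M := \pi_* \mu_t^G$ and identify it as the transition kernel of $P_t^M$; the hypoelliptic heat equation on $M$ is then satisfied on test functions via the intertwining. Smoothness and positivity of the heat kernel $p_t^G$ push through $\pi$ to give $\mu_t^M$ a smooth positive density on $M$, so Theorem~\ref{thm.DirichletFormSubRiemannian} applies and produces the Dirichlet form $\mathcal{E}_{\mu_t^M}$ on $L^2(M, d\mu_t^M)$ with core $C^\infty_c(M)$.

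For part (4), the heart of the argument is the transfer of both sides of the LSI under $\pi$. The pushforward identity $\mu_t^M = \pi_* \mu_t^G$ yields, for every Borel $F \geq 0$,
\begin{align*}
\int_G F(\tilde{f}) \, d\mu_t^G = \int_M F(f) \, d\mu_t^M,
\end{align*}
while the construction of the sub-Riemannian metric on $M$ gives the pointwise equality
\begin{align*}
|\nabla^G_{\mathcal{H}} \tilde{f}|^2_{\mathcal{H}_g} = |\nabla^M_{\mathcal{H}} f|^2_{\mathcal{H}_{\pi(g)}}, \qquad g \in G.
\end{align*}
Applied with $F(s) = s^2 \log s^2$ and $F(s) = s^2$, these two identities rewrite each of the three terms of the LSI on $M$ applied to $f$ as the corresponding term of the LSI on $G$ applied to $\tilde{f}$, and the constant therefore transfers with equality rather than inequality.

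The main obstacle is that $\tilde{f}$ is constant on the fibers of $\pi$ and hence fails to lie in $C^\infty_c(G)$ whenever $K$ is non-compact, so the LSI hypothesis on $G$ does not directly cover $\tilde{f}$. I would handle this by first upgrading the assumed LSI on $G$ from $C^\infty_c(G)$ to $\mathcal{D}(\mathcal{E}_{\mu_t^G})$ via Proposition~\ref{prop.LimitInLSI}, and then verifying $\tilde{f} \in \mathcal{D}(\mathcal{E}_{\mu_t^G})$ through the explicit characterization \eqref{eqn.DomainWeightedDirichletForm}: since $f \in C^\infty_c(M)$ is bounded, so is $\tilde{f}$, and the pushforward identity gives $\|\tilde{f}\|_{L^2(\mu_t^G)} = \|f\|_{L^2(\mu_t^M)} < \infty$; analogously $\widetilde{X_i} \tilde{f} = (X_i^M f) \circ \pi$ is bounded and in $L^2(\mu_t^G)$ for each $i$. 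Applying the extended LSI on $G$ to $\tilde{f}$ and invoking the two identities gives the LSI on $M$ for all $f \in C^\infty_c(M)$, and a final appeal to Proposition~\ref{prop.LimitInLSI} on $M$ extends it to $\mathcal{D}(\mathcal{E}_{\mu_t^M})$ with the same constant.
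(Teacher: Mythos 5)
Your proposal follows essentially the same route as the paper: part (1) is Proposition~\ref{prop.SubRiemannianOnHomogeneousSpaces}, parts (2)--(3) rest on the intertwining identities \eqref{eqn.LaplacianRelation} and \eqref{eqn.SemigroupRelation} together with Definition~\ref{df.HeatKernelMeasureonQuotientSpace}, and your part (4) --- including the key observation that $f\circ\pi\notin C^\infty_c(G)$ when the isotropy group is non-compact, repaired by first extending the LSI on $G$ to $\mathcal{D}(\mathcal{E}_{\mu_t^G})$ via Proposition~\ref{prop.LimitInLSI} and then checking $f\circ\pi$ lies in that domain --- is exactly the argument of Theorem~\ref{thm.LSIonHomogeneousSpace} combined with Lemma~\ref{lem.DomainRelation}. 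Two caveats. First, the metric on $M$ is not literally the pushforward of $\langle\cdot,\cdot\rangle^{G}_{\mathcal{H}}$: the map $d\pi_g$ can degenerate on $\mathcal{H}^G_g$ (as for the Grushin plane), so the paper instead declares the linearly independent images of an orthonormal frame of $\mathcal{H}^G$ to be orthonormal; with either convention the identity \eqref{eqn.GradientRelation} that your part (4) relies on still holds, but you should state the construction the way the paper does in degenerate cases. Second, building $\mathcal{E}_{M}$ by applying Theorem~\ref{thm.DirichletFormSubRiemannian} directly on $M$ requires $\mu_t^M$ to have a smooth positive density, i.e.\ that the fiber integral $\int_H p_t^G(hg)\,d\mu_R^H(h)$ is finite and smooth in $Hg$ --- a point you assert but do not verify; the paper sidesteps this entirely by deducing closability of $\mathcal{E}^0_{M}$ from the closedness of $\mathcal{E}_{G}$ through the isometry $\Vert f\Vert_{\mathcal{E}^0_{M}}=\Vert f\circ\pi\Vert_{\mathcal{E}_{G}}$, which is the cleaner route.
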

The key ingredient in the proof of this theorem is the characterization of homogeneous spaces given in Theorem~\ref{t.HomSpaceCharacterization}, which allows us to describe a homogeneous space as a quotient space and transform the study of our problem on a quotient space. In this way, we can describe the sub-Riemannian structure, reference measures, heat semigroup, hypoelliptic heat kernel measure and Dirichlet form associated to the hypoelliptic heat kernel measure on the homogeneous space precisely. In particular, we study  the heat kernel analysis on the homogeneous space $M$ in terms of the  group $G$ acting transitively $M$. This is the subject of several sections of the paper.

\subsection{Characterization of homogeneous spaces}

The fact that the action of $G$ on $M$ is transitive means that this structure \emph{looks the same} everywhere on the manifold. Any homogeneous $G$-space can be characterized by the following theorem.

\begin{theorem}[Homogeneous space characterization theorem, Theorem 21.~18 in \cite{LeeBook2003SmoothManifold}]\label{t.HomSpaceCharacterization}
Let $G$ be a Lie group, let $M$ be a homogeneous $G$-space, and let $p$ be any point of $M$. Then the isotropy group $G_p$ is a closed subgroup of $G$, and the map $F:G_p\backslash G \rightarrow M$ defined by $F(gG_p)=g\cdot p$ is an equivariant diffeomorphism.
\end{theorem}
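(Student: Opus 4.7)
The plan is to prove the theorem in four stages, drawing on two standard results from smooth manifold theory which I will freely cite: Cartan's closed subgroup theorem (any closed subgroup of a Lie group is an embedded Lie subgroup) and the quotient manifold theorem (for a smooth, free, and proper action, the orbit space is a smooth manifold and the projection is a smooth submersion).

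First I would consider the orbit map $\phi_p: G \to M$ defined by $\phi_p(g) = g\cdot p$. Smoothness of the action forces $\phi_p$ to be continuous, so $G_p = \phi_p^{-1}(\{p\})$ is a closed subset of $G$ because singletons are closed in the Hausdorff manifold $M$. The subgroup property follows directly from the action axioms: for $g,h\in G_p$, we have $(gh^{-1})\cdot p=g\cdot(h^{-1}\cdot p)=g\cdot p=p$. Applying Cartan's theorem, $G_p$ is then an embedded Lie subgroup, so the right $G_p$-action on $G$ by multiplication is smooth, free, and proper. The quotient manifold theorem equips the coset space $G_p\backslash G$ (with elements $gG_p$ in the convention of the statement) with a unique smooth structure of dimension $\dim G-\dim G_p$, and makes the projection $\pi: G\to G_p\backslash G$ a smooth surjective submersion.

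Next I would verify that $F$ is a well-defined, equivariant, smooth bijection. The chain of equivalences $g\cdot p=h\cdot p\iff h^{-1}g\in G_p\iff gG_p=hG_p$ handles well-definedness and injectivity at once; surjectivity is immediate from transitivity of the action. Equivariance under the induced left $G$-action $g\cdot(hG_p)=(gh)G_p$ reads $F(g\cdot(hG_p))=(gh)\cdot p=g\cdot F(hG_p)$. For smoothness, the factorization $F\circ\pi=\phi_p$ together with $\pi$ being a surjective submersion and $\phi_p$ being smooth yields smoothness of $F$ by the universal property of smooth quotients.

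The delicate and only nontrivial step, which is the main obstacle, is showing that $F^{-1}$ is smooth. The strategy is to exploit equivariance: for each $g\in G$, left translation on $G_p\backslash G$ induces a diffeomorphism $L_g$ satisfying $F\circ L_g=\tau_g\circ F$, where $\tau_g$ denotes the action of $g$ on $M$. Differentiating this identity at an arbitrary point shows that $dF$ has the same rank everywhere on $G_p\backslash G$. Surjectivity of $F$ together with the constant-rank theorem (which locally presents $F$ near any point as a submersion onto a slice followed by an inclusion) forces the common rank to equal $\dim M$, while injectivity forces it to equal $\dim(G_p\backslash G)$; hence $\dim M=\dim G-\dim G_p$ and $F$ is a bijective smooth map of constant rank between equidimensional manifolds, which by the global rank theorem is a diffeomorphism. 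The subtlety is handling the dimensional match and the constant-rank conclusion simultaneously, but the combination of equivariance and transitivity makes the argument routine.
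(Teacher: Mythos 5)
The paper does not prove this statement; it quotes it verbatim as Theorem~21.18 of the cited reference \cite{LeeBook2003SmoothManifold}, so there is no internal proof to compare against. Your argument is correct and is essentially the proof given in that source: closedness of $G_p$ as the preimage of a point under the orbit map, Cartan's closed subgroup theorem together with the quotient manifold theorem to construct the smooth coset space, smoothness of $F$ via the universal property of the surjective submersion $\pi$, and the equivariance/constant-rank argument combined with the global rank theorem to upgrade the smooth equivariant bijection to a diffeomorphism.
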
 
Note that all isotropy groups are conjugate. For this reason, even for different $p\in M$, the corresponding quotient manifold $G_p\backslash G$ is \emph{unique} up to a diffeomorphism. In this way, when we are given a smooth manifold $M$ together with a transitive action by a
Lie group $G$, it suffices to fix a point $p\in M$ and its corresponding isotropy group $G_p$ denoted by $H$. Then we can always use the preceding theorem to identify $M$ equivariantly
with a coset space of the form $H\backslash G$, where $H$ is a closed subgroup of $G$. Such a procedure enables us to use all of the machinery that is available  for analyzing quotient spaces. For the rest of this paper, we will identify such a homogeneous $G$-space $M$ with $H\backslash G$ and focus on the study of the quotient space $H\backslash G$.

Let $G$ be a connected Lie group and $H$ be a closed subgroup of $G$. Recall that by \cite[Theorem 20.12]{LeeBook2003SmoothManifold} $H$ is an embedded submanifold. Note that if $H$ is a discrete subgroup of $G$, then  $H$ is a closed Lie subgroup of dimension zero by \cite[Proposition 21.28]{LeeBook2003SmoothManifold}. Without loss of generality, we assume $H$ is an $k$-dimensional closed Lie subgroup of $H$ for some $k\geqslant 0$. We choose the right Haar measure $\mu^H_R$ on $H$ as the reference measure.  By \cite[Example 7.22 (c)]{LeeBook2003SmoothManifold}, the left action of $H$ on $G$
\begin{align}
& H \times G \rightarrow G \notag
\\
&
\left(h,g\right) \mapsto h \circ g \notag
\end{align}
is always smooth and free, but generally not transitive. When $H$ is discrete, this action is always proper by \cite[p. 557]{LeeBook2003SmoothManifold}. It is also interesting to see whether such an action is isometric in the sub-Riemannian sense. That is, we want to see whether under the action $H$ an orthonormal frame is mapped to an orthonormal frame of $\left(G, \mathcal{H}^{G}, \langle \cdot, \cdot \rangle^{G}_{\mathcal{H}}\right)$. We will discuss such an action for different examples in Section~\ref{sec.Examples}.

The right cosets $H\backslash G$ of $H$ have an induced smooth structure and form a $(n+m-k)$-dimensional smooth manifold. Let $\pi:G \rightarrow H\backslash G$ be the quotient map. We know that the continuity of the quotient map $\pi$ implies the connectedness of $ H\backslash G$ when $G$ is connected. Moreover, when $H$ is discrete, by \cite[Theorem~21.29]{LeeBook2003SmoothManifold}, $H\backslash G$ is a smooth manifold and the quotient map $\pi:G \rightarrow H\backslash G$ is a smooth normal covering map. However, $H\backslash G$ is not always a group. Only when $H$ is a normal subgroup, or equivalently when
 $\mathfrak{h}$, the Lie algebra of $H$ is an ideal in $\mathfrak{g}$, $H\backslash G$ is a Lie group.

\begin{notation}
We denote the homogeneous space $H\backslash G$ by $M$.
\end{notation}

We refer to \cite{DriverGrossSaloff-Coste2010} for a detailed description of the homogeneous space $M$ when $G$ is a simply connected Lie group and $H$ is a connected closed Lie subgroup. Note that connectedness of  $H$ is equivalent to $M$ being simply connected by \cite[ I. Chap. 1, Theorem 4.8]{GorbatsevichOnishchikVinbergBook1997}. However, we consider a more general situation here dropping the assumption that $H$ is a connected Lie subgroup. A typical case is when $H$ is a discrete subgroup. Then the homogeneous space $M$ is no longer simply connected as we will see in Section~\ref{sec.ExampleSO(3)} and Section~\ref{sec.ExampleSO(4)}.

\subsection{Smooth structure on a homogeneous space}
We first recall how one can define smooth vector fields on $M=H\backslash G$, as found in \cite[ Notation 6.3]{DriverGrossSaloff-Coste2010}.

\begin{definition}
For any $X\in\mathfrak{g}$ and any $m=Hg\in M$, we define 
\begin{align} \label{eqn.VectorFieldonHomogeneousSpace}
\dot{X}(m):=\left.\frac{d}{dt}\right\vert_{t=0}\left(m\operatorname{exp}\left(tX\right)\right)=\left.\frac{d}{dt}\right\vert_{t=0}H\left(g\operatorname{exp}\left(tX\right)\right).
\end{align}
\end{definition}
Then \eqref{eqn.VectorFieldonHomogeneousSpace} defines a smooth vector field on $M$. However, there is another way to understand smooth vector fields on $M$, which will be used more often in this paper. 

Note that both $\pi:G \rightarrow M$ and its differential $d\pi_g:T_gG\rightarrow T_{\pi(g)}M$ are surjective maps. We can connect smooth vector fields on $M$ with left-invariant vector fields on $G$ by the differential $d\pi_g$ of $\pi$. This link explicitly was given in \cite[Lemma 6.4]{DriverGrossSaloff-Coste2010}, when $G$ is simply connected and $H$ is connected. Lemma~\ref{l.3.5} described smooth vector fields on $M$ when $H$ is only a closed subgroup. Its proof is similar to that of \cite[Lemma 6.4]{DriverGrossSaloff-Coste2010}. For completeness, we include it below.

\begin{lemma}\label{l.3.5}
For any $X\in\mathfrak{g}$ and any $g\in G$, we have
\begin{align} \label{eqn.VectorRelation}
\left(d\pi_g(\widetilde{X})\right)(\pi(g))=\dot{X} \left(\pi(g)\right).
\end{align}
Moreover, for $X\in\mathfrak{g}$, $\dot{X}(He)=0$ if and only if $X\in\mathfrak{h}$.
\end{lemma}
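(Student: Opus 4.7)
The plan is to compute both sides of \eqref{eqn.VectorRelation} by representing $\widetilde{X}(g)$ as the velocity of an explicit curve and applying the chain rule for pushforwards through $\pi$. Concretely, the left-invariant vector field $\widetilde{X}$ at $g$ is realized as the tangent vector at $t=0$ of the smooth curve $\gamma(t)=g\operatorname{exp}(tX)$. Hence
\begin{align*}
\bigl(d\pi_g(\widetilde{X})\bigr)(\pi(g))
= \left.\frac{d}{dt}\right|_{t=0}\pi\bigl(g\operatorname{exp}(tX)\bigr)
= \left.\frac{d}{dt}\right|_{t=0} H\bigl(g\operatorname{exp}(tX)\bigr)
= \dot{X}(\pi(g)),
\end{align*}
where the last equality is simply the definition \eqref{eqn.VectorFieldonHomogeneousSpace}. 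This disposes of the first claim with essentially no work beyond unpacking definitions.

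For the characterization of the kernel I would prove the two implications separately. If $X\in\mathfrak{h}$, then $\operatorname{exp}(tX)\in H$ for all $t\in\mathbb{R}$, since the one-parameter subgroup generated by $X$ stays inside the Lie subgroup $H$. Therefore the right coset $H\operatorname{exp}(tX)$ equals $H=He$ for every $t$, so $\dot{X}(He)=\frac{d}{dt}|_{t=0}H\operatorname{exp}(tX)=0$.

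For the converse I would invoke the fact that $\pi\colon G\to H\backslash G$ is a smooth submersion whose fibers are the right cosets of $H$; in particular, the fiber above $He\in M$ is $H$ itself. Evaluating the already-established identity \eqref{eqn.VectorRelation} at $g=e$ yields $\dot{X}(He)=d\pi_e(X)$, so the hypothesis $\dot{X}(He)=0$ is equivalent to $X\in\ker d\pi_e$. Because $\pi$ is a submersion, $\ker d\pi_e$ coincides with the tangent space at $e$ to the fiber $\pi^{-1}(He)=H$, which is $\mathfrak{h}$ by definition, and so $X\in\mathfrak{h}$.

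The only point that genuinely requires care—what I would flag as the main obstacle—is justifying that $\pi$ is a smooth submersion with the expected fibers when $H$ is merely assumed to be closed (not necessarily connected, and possibly discrete, as emphasized by the authors above the lemma). This relies on the quotient manifold theorem together with Cartan's closed subgroup theorem, both of which are cited from \cite{LeeBook2003SmoothManifold} just before the lemma; once these are in hand the kernel computation and the chain-rule identity are immediate.
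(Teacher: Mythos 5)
Your proof is correct and follows essentially the same route as the paper: the identity \eqref{eqn.VectorRelation} is obtained by differentiating the curve $t\mapsto\pi\left(g\operatorname{exp}(tX)\right)$ and unpacking the definition of $\dot{X}$, and the kernel statement reduces to identifying $\ker d\pi_e$ with $\mathfrak{h}$. The only difference is that the paper asserts this last equivalence in one line, whereas you justify it explicitly via the submersion property of $\pi$ and the fiber $\pi^{-1}(He)=H$; that is a welcome elaboration rather than a different argument.
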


\begin{proof}
For any $X\in\mathfrak{g}$ and any $g\in G$, we have
\begin{align*}
\left(d\pi_g(\widetilde{X})\right)(\pi(g)) & =\left.\frac{d}{dt}\right\vert_{t=0} \pi\left(g\operatorname{exp}(tX)\right)
\\
&
=\left.\frac{d}{dt}\right\vert_{t=0} \pi\left(g\right)\operatorname{exp}(tX)
\\
&
=\dot{X} \left(\pi(g)\right).
\end{align*}
By \eqref{eqn.VectorRelation}, we have $0=\dot{X} \left(He\right)=\left(d\pi_e(\widetilde{X})\right)(He)=\left(d\pi_e(X)\right)(He)$ if and only if $X\in\mathfrak{h}$. 
\end{proof}

\subsection{Sub-Riemannian structure on homogeneous spaces} \label{sec.SubRiemannianStructureonHomogoneousSpaces}
We start by introducing a natural sub-Riemannian structure on homogeneous spaces induced by the transitive action by $G$. Proposition~\ref{prop.SubRiemannianOnHomogeneousSpaces} proves part (1) of Theorem~\ref{thm.MainTheorem}.

\begin{proposition} \label{prop.SubRiemannianOnHomogeneousSpaces}
Suppose $G$ is equipped with a left-invariant sub-Riemannian structure $\left(G, \mathcal{H}, \langle\cdot,\cdot\rangle_{\mathcal{H}}\right)$, then the homogeneous space $M$ has a natural sub-Riemannian structure $\left(M,\mathcal{H}^{M},\langle\cdot,\cdot\rangle^{M}_{\mathcal{H}}\right)$ induced by the transitive action by $G$.
\end{proposition}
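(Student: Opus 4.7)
The plan is to push the sub-Riemannian structure on $G$ down to $M = H\backslash G$ through the quotient map $\pi: G \to M$, with left-invariance of both $\mathcal{H}^G$ and $\langle\cdot,\cdot\rangle^G_{\mathcal{H}}$, combined with the identity $\pi \circ L_h = \pi$ for every $h \in H$, supplying the compatibility needed for well-definedness.

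First, I would define the horizontal distribution on $M$ by $\mathcal{H}^{M}_{m} := d\pi_g(\mathcal{H}^{G}_g)$ for any lift $g \in \pi^{-1}(m)$. Differentiating $\pi \circ L_h = \pi$ yields $d\pi_{hg} \circ dL_h|_g = d\pi_g$, and left-invariance of $\mathcal{H}^G$ gives $dL_h(\mathcal{H}^G_g) = \mathcal{H}^G_{hg}$, so $d\pi_g(\mathcal{H}^G_g) = d\pi_{hg}(\mathcal{H}^G_{hg})$ for every $h \in H$. Thus $\mathcal{H}^{M}_{m}$ does not depend on the choice of lift.

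Next, to define the sub-Riemannian metric, I would set $K_g := \mathcal{H}^G_g \cap \ker d\pi_g$, let $K_g^{\perp}$ denote its orthogonal complement in $\mathcal{H}^G_g$ with respect to $\langle\cdot,\cdot\rangle^G_{\mathcal{H}_g}$, observe that $d\pi_g|_{K_g^{\perp}}: K_g^{\perp} \to \mathcal{H}^{M}_{m}$ is a linear isomorphism, and declare it to be an isometry, so that
\[
\langle v, w\rangle^{M}_{\mathcal{H}_m} := \bigl\langle (d\pi_g|_{K_g^\perp})^{-1}(v),\ (d\pi_g|_{K_g^\perp})^{-1}(w)\bigr\rangle^{G}_{\mathcal{H}_g}.
\]
For independence of $g$, the key point is that $dL_h$ is an isometry of $\mathcal{H}^G$ (by left-invariance of the metric) that maps $K_g$ to $K_{hg}$ (since it preserves both $\mathcal{H}^G$ and the fiber tangent bundle $\ker d\pi$), hence maps $K_g^{\perp}$ to $K_{hg}^{\perp}$ isometrically; combined with $d\pi_{hg} \circ dL_h = d\pi_g$, this shows that the unique lift of $v \in \mathcal{H}^{M}_{m}$ in $K_{hg}^{\perp}$ is the $dL_h$-image of its lift in $K_g^{\perp}$, so the two definitions of $\langle v, w\rangle^{M}_{\mathcal{H}_m}$ agree.

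To verify H\"ormander's condition on $M$, I would apply Lemma~\ref{l.3.5}, which shows that the left-invariant horizontal vector fields $\widetilde{X_i}$ on $G$ are $\pi$-related to the smooth vector fields $\dot{X_i}$ on $M$. Since $\pi$-relatedness is preserved under Lie brackets, the iterated brackets of the $\dot{X_i}$ are the $\pi$-images of the iterated brackets of the $\widetilde{X_i}$; the latter span $T_gG$ by H\"ormander's condition on $G$, so the former span $T_{\pi(g)}M$. Smoothness of the distribution and of the metric then follows from the smoothness of $\pi$ and the smooth dependence of $K_g^{\perp}$ on $g$.

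The main obstacle I anticipate is ensuring that $\mathcal{H}^{M}$ has locally constant rank, i.e.\ that $\dim K_g$ is locally constant on $M$. Left-invariance gives $K_{hg} = dL_h(K_g)$, so the dimension is automatically constant along fibers of $\pi$, but one must still argue that the fiberwise quantity $\dim(\mathcal{H} \cap \operatorname{Ad}(g^{-1})\mathfrak{h})$ does not jump as $g$ ranges over $G$. This is the point at which the compatibility between $\mathcal{H}$ and the Lie subalgebra $\mathfrak{h}$ enters, and is typically either imposed as a hypothesis on the pair $(G,H)$ or verified case by case in the examples of Section~\ref{sec.Examples}.
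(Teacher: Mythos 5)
Your argument is correct, and the skeleton coincides with the paper's: the distribution $\mathcal{H}^M$ is the $d\pi$-image of $\mathcal{H}^G$, and H\"ormander's condition descends because $\pi$-relatedness (Lemma~\ref{l.3.5}) is preserved under iterated Lie brackets together with surjectivity of $d\pi_g$ --- this is exactly the paper's identity \eqref{eqn.HormanderonQuotientSpace}. Where you genuinely diverge is the metric. The paper explicitly declines to use a push-forward construction and instead fixes an orthonormal frame $\{\widetilde{X_1},\dots,\widetilde{X_n}\}$ upstairs and \emph{declares} the linearly independent members of $\{d\pi_g(\widetilde{X_i})\}$ to be an orthonormal frame downstairs; you instead take the orthogonal complement $K_g^{\perp}$ of $K_g=\mathcal{H}^G_g\cap\ker d\pi_g$ and declare $d\pi_g|_{K_g^{\perp}}$ an isometry (the standard submersion quotient metric), checking independence of the lift via $d\pi_{hg}\circ dL_h=d\pi_g$ --- a well-definedness verification the paper omits entirely. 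The two metrics coincide precisely when each $d\pi_g(\widetilde{X_i})$ is either zero or the nonzero images are linearly independent, which is what happens in every example of Sections~\ref{sec.Examples}--\ref{sec.ExampleSU(2)Type}; when nonzero images are linearly dependent they can differ (and the paper's prescription then depends on the choice of frame and of independent subset, while yours is canonical). Your construction also still yields the key identity \eqref{eqn.GradientRelation}, since vectors in $K_g$ annihilate $f\circ\pi$, so nothing downstream breaks. Finally, both you and the paper leave the locally-constant-rank issue unresolved --- the paper simply remarks that ``$M$ may not always be constant rank'' --- so flagging it as a hypothesis to be checked in examples is consistent with what the authors actually do.
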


\begin{proof}
It suffices to show that H\"ormander's condition is satisfied on $M$. For any $\widetilde{X},\widetilde{Y}\in\mathcal{H}^G$ and any $g\in G$, we have
\begin{align} \label{eqn.HormanderonQuotientSpace}
\left[\left(d\pi_{g}(\widetilde{X})\right)(\pi(g)),\left(d\pi_{g}(\widetilde{Y})\right)(\pi(g))\right]=\left(d\pi_{g}\left(\left[\widetilde{X},\widetilde{Y}\right]\right)\right)(\pi(g)).
\end{align} 
This implies that H\"{o}rmander's condition is satisfied on $M$. 

Now, we describe how $M$ can be equipped with a natural sub-Riemannian structure. We choose the horizontal distribution $\mathcal{H}^{M}$ to be
\[
\mathcal{H}_{\pi(g)}^{M}= \operatorname{Span}\left\{\left(d\pi_{g}(\widetilde{X})\right)\left(\pi( g)\right): \widetilde{X}\in \mathcal{H}\, \text{such that} \, \left(d\pi_{g}(\widetilde{X})\right)\left(\pi(g)\right)\neq 0\right\}
\]  
for any $g\in G$. Note that $M$ may not always be constant rank. Then the sub-Riemannian metric $\langle\cdot,\cdot\rangle^{M}_{\mathcal{H}}$ on $\mathcal{H}^{M}$ may not simply be the push-forward metric of $\langle\cdot,\cdot\rangle^{G}_{\mathcal{H}}$ from $\mathcal{H}^G$ to $\mathcal{H}^{M}$. But it can be determined by the choice of an orthonormal frame induced from an orthonormal frame of $\left(G,\mathcal{H}^G,\langle\cdot,\cdot\rangle_{\mathcal{H}}^{G}\right)$. Let $\left\{\widetilde{X_1},\cdots,\widetilde{X_n}\right\}$ be an orthonormal frame of $\left(G,\mathcal{H}^G,\langle\cdot,\cdot\rangle_{\mathcal{H}}^{G}\right)$. At each $\pi(g)$ for any $g\in G$, $\langle\cdot,\cdot\rangle^{M}_{\mathcal{H}_{\pi(g)}}$ can be chosen in such a way that $\left\{\right.d\pi_g(\widetilde{X_i}):i=1,\cdots,n \, \text{such that} \left.\left(d\pi_{g}(\widetilde{X})\right)\left(\pi(g)\right)\text{are linearly independent}\right\}$ forms an orthonormal frame for it. Therefore $M$ has a natural sub-Riemannian structure $\left(M,\mathcal{H}^{M},\langle\cdot,\cdot\rangle^{M}_{\mathcal{H}}\right)$.
\end{proof}

\begin{remark}
The induced sub-Riemannian metric $\langle\cdot,\cdot\rangle^{M}_{\mathcal{H}}$ on $M$ is not always $G$-invariant. For example, when $G$ is the three-dimensional isotropic Heisenberg group and $M$ is the Grushin plane as we describe in Section~\ref{sec.Grushin}, the sub-Riemannian metric $\langle\cdot,\cdot\rangle^{M}_{\mathcal{H}}$ is not $G$-invariant.
\end{remark}

Note that the notion of \emph{sub-Riemannian homogeneous spaces} used in \cite[p.250]{Strichartz1986} is different from the setting considered in this paper. A sub-Riemannian homogeneous space is an example of homogeneous spaces equipped with natural sub-Riemannian structures induced by a transitive action by a Lie group. The transitive action of $G$ on a \emph{sub-Riemannian homogeneous space} is assumed to be an \emph{infinitesimal isometry} as defined in \cite[Section 8]{Strichartz1986}, while we do not require the action of $G$ to be such in our setting.

The Carnot-Carath\'{e}odory distance $d_{CC}^{M}$ induces a metric space structure on $M$ with the topology equivalent to the manifold topology. Generally, it is not clear under which assumptions $\left(M, d_{CC}^{M}\right)$ is a complete metric space. For example, the Grushin plane  considered in Section~\ref{sec.Grushin} is not complete or geodesically complete (see \cite[p.2]{PrandiRizziSeri2018} or \cite[p.12]{GalloneMichelangeliPozzoli2019}). Characterization of completeness of $\left(M, d_{CC}^{M}\right)$ is given in \cite[Proposition 1.1]{HughenThesis1995}. Moreover, when $M$ is also a Lie group, then $\left(M, d_{CC}^{M}\right)$ is complete. When $M$ is sub-Riemannian homogeneous space as defined in \cite{Strichartz1986}, then $\left(M, d_{CC}^{M}\right)$ is complete by \cite[p.250]{Strichartz1986}.

The \emph{horizontal gradient} $\nabla_{\mathcal{H}}^{M}f$ on $M$ is defined by
\begin{align*}
\left(\nabla_{\mathcal{H}}^{M}f\right)(\pi(g))=\left(d\pi_g\left(\nabla_{\mathcal{H}}^{G}(f\circ \pi)\right)\right)(\pi(g))
\end{align*}
for any $g\in G$. Then $\nabla_{\mathcal{H}}^{M}f$ on $M$ has the form 
\begin{align*}
\nabla_{\mathcal{H}}^{M}f= \sum_{i=1}^{n}
\left(\left(d\pi_{g}(\widetilde{X_i})\right)f\right)\left(d\pi_{g}(\widetilde{X_i})\right)
\end{align*}
for any $f\in C^{\infty}(M)$. In addition, we have
\begin{align} \label{eqn.GradientRelation}
\left\vert \nabla_{\mathcal{H}}^{G}(f\circ \pi)\right\vert_{\mathcal{H}}=\left\vert \nabla^{M}_{\mathcal{H}}f\right\vert_{\mathcal{H}} \circ \pi  
\end{align}
for any $f\in C^{\infty}\left(M\right)$. Finally we define the \emph{sub-Laplacian} on $M$ by
\begin{align} \label{df.SubLaplacianonQuotientSpace}
\Delta_{\mathcal{H}}^{M}=\sum_{i=1}^{n} \left(d\pi_{g}\left(\widetilde{X_i}\right)\right)^2,
\end{align}
and observe that $\Delta_{\mathcal{H}}^{M}$ is a hypoelliptic operator. Furthermore, we have
\begin{align} \label{eqn.LaplacianRelation}
\Delta^{G}_{\mathcal{H}}\left(f \circ \pi\right)=\left(\Delta^{M}_{\mathcal{H}} f\right) \circ \pi
\end{align}
for any $f\in C^{\infty}\left(M\right)$.

\subsection{Measures on homogeneous spaces}\label{s.MeasuresHS}

We would like to discuss two natural choices of a reference measure on $M$, namely, the pushforward measure of the right Haar measure $\mu_R^G$ to $M$ by the quotient map $\pi$ and a measure quasi-invariant with respect to the action by $G$ which satisfies the disintegration Theorem~\ref{thm.Disintegration}. These measures exist for any locally compact group $G$ together with a closed subgroup $H$, and in particular our setting when $G$ is a connected Lie group.

\subsubsection{Pushforward of the right Haar measure}

We first discuss the pushforward measure of the right Haar measure $\mu_R^G$ on $G$ to $M$ by the quotient map $\pi$. Recall that  $\mu_R^G$ is a Radon measure, and $\pi$ is a measurable map. Denote by $\pi_{\sharp}\mu_R^G$ the pushforward measure of $\mu_R^G$ to $H\backslash G$ by $\pi$

\begin{align*}
\left(\pi_{\sharp}\mu_R^G\right)(A):=\mu_R^G\left(\pi^{-1}(A)\right)
\end{align*}
for any $B\in \mathcal{B}(M)$.

\begin{lemma}
For any $A\in \mathcal{B}(M)$ and any $g\in G$, we have
\begin{align} \label{eqn.ImageRelation}
\pi^{-1}(Ag)=\left(\pi^{-1}(A)\right) g.
\end{align}
\end{lemma}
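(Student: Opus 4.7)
The plan is to verify the set equality \eqref{eqn.ImageRelation} directly by chasing an arbitrary element $g' \in G$ through the definitions of the quotient map $\pi$, the right action of $G$ on $M = H\backslash G$, and the right action of $G$ on itself. Since all three actions are compatible in the sense that $\pi(g'g) = \pi(g')g$ by definition of the right action on cosets, the equality should essentially be a tautology once one unpacks notation.

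More precisely, I would show the two inclusions in turn. For ``$\subseteq$'', suppose $g' \in \pi^{-1}(Ag)$. Then $Hg' = \pi(g') \in Ag$, so there exists $Hg'' \in A$ with $Hg' = (Hg'')g = Hg''g$. This forces $g'g^{-1} \in Hg''$, hence $\pi(g'g^{-1}) = Hg'' \in A$, which means $g'g^{-1} \in \pi^{-1}(A)$ and therefore $g' \in \pi^{-1}(A)\,g$. For ``$\supseteq$'', suppose $g' = g''g$ with $g'' \in \pi^{-1}(A)$, so that $Hg'' \in A$. Then $\pi(g') = Hg''g \in Ag$, giving $g' \in \pi^{-1}(Ag)$.

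There is no real obstacle here: the statement is a bookkeeping identity about cosets and is the kind of compatibility lemma that will be used repeatedly when relating the right Haar measure on $G$ to its pushforward on $M$ (for instance to establish right-invariance of $\pi_\sharp \mu_R^G$ under the $G$-action on $M$). The only point to be careful about is not to confuse $Ag$, meaning the image of $A \subseteq M$ under the right action of $g$ on $M$, with a pointwise product in $G$; the argument above makes it clear that the two agree once one identifies elements of $M$ with cosets in $G$.
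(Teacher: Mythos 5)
Your proof is correct and follows essentially the same element-chasing argument as the paper; the only cosmetic difference is that the paper first reduces to the pointwise identity $\pi^{-1}(mg)=\pi^{-1}(m)g$ and picks a coset representative, whereas you chase elements directly at the level of the set $A$. Both routes amount to the same coset bookkeeping, and your verification of each inclusion is sound.
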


\begin{proof}
It suffices to show 
\begin{align} \label{eqn.PointwiseRelation}
\pi^{-1}(mg)=\pi^{-1}(m)g
\end{align}
for any $m\in M$ and any $g\in G$. Note that both sides of \eqref{eqn.PointwiseRelation} are Borel sets in $G$. For each $m\in M$, let us pick a representative $g_0\in G$ for the right coset of $H$ corresponding to $m$. 

For any $\tilde{g} \in \pi^{-1}(mg)$, we have $\pi(\tilde{g})=mg=(Hg_0)g=H(g_0g)$. This implies that there exists an $\tilde{h}\in H$ such that $\tilde{g}=\tilde{h}g_0g$. Then we have $\pi(\tilde{g}g^{-1})=\pi(\tilde{h}g_0gg^{-1})=\pi(\tilde{h}g_0)=Hg_0=m$, which implies $\tilde{g}\in \pi^{-1}(m)g$. Thus, $\pi^{-1}(mg)\subseteq \pi^{-1}(m)g$.

Now for any $\tilde{g}\in \pi^{-1}(m)g$, we have $\tilde{g}g^{-1}\in \pi^{-1}(m)$ and thus $\pi(\tilde{g}g^{-1})=m$. This means that there exists an $h^{\prime} \in H$ such that $\tilde{g}g^{-1}=h^{\prime}g_0$, so $\tilde{g}=h^{\prime}g_0g$. We see that $\pi(\tilde{g})=H(g_0g)=(Hg_0)g=mg$, so $\tilde{g}\in \pi^{-1}(mg)$. Thus, $\pi^{-1}(m)g \subseteq \pi^{-1}(mg)$. Therefore, \eqref{eqn.PointwiseRelation} holds and \eqref{eqn.ImageRelation} follows.
\end{proof}

\begin{theorem}
The pushforward measure $\pi_{\sharp}\mu_R^G$ on $M$ is $G$-invariant. That is, for any $A\in \mathcal{B}(M)$ and any $g\in G$, we have
\begin{align}
(\pi_{\sharp}\mu_R^G)(Ag)=(\pi_{\sharp}\mu_R^G)(A).
\end{align}
\end{theorem}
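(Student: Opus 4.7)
The plan is to chain together three facts: the definition of pushforward measure, the lemma proved immediately before the statement, and the defining property of the right Haar measure on $G$.

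First, I would unpack the left-hand side using the definition of pushforward:
\begin{equation*}
(\pi_{\sharp}\mu_R^G)(Ag) = \mu_R^G(\pi^{-1}(Ag)).
\end{equation*}
Next, I would apply the identity $\pi^{-1}(Ag) = \pi^{-1}(A)g$ established in the preceding lemma (equation \eqref{eqn.ImageRelation}), to obtain
\begin{equation*}
\mu_R^G(\pi^{-1}(Ag)) = \mu_R^G\bigl(\pi^{-1}(A)\,g\bigr).
\end{equation*}
Finally, since $\pi^{-1}(A)$ is a Borel subset of $G$ and $\mu_R^G$ is right-invariant by construction, this equals $\mu_R^G(\pi^{-1}(A)) = (\pi_{\sharp}\mu_R^G)(A)$, yielding the claim.

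There is no real obstacle here: the preceding lemma does the geometric work of converting the right $G$-action on cosets in $M$ into right multiplication in $G$, and right-invariance of $\mu_R^G$ then closes the argument. The only thing worth verifying carefully is measurability, namely that $\pi^{-1}(A) \in \mathcal{B}(G)$ whenever $A \in \mathcal{B}(M)$, which follows from continuity of the quotient map $\pi$. So the proof reduces to a three-step chain of equalities and does not require any further machinery.
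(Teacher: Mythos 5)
Your proposal is correct and follows exactly the same three-step chain as the paper's own proof: unpack the pushforward, apply the identity $\pi^{-1}(Ag)=\pi^{-1}(A)g$ from the preceding lemma, and invoke the right invariance of $\mu_R^G$. The added remark on measurability of $\pi^{-1}(A)$ is a reasonable (if routine) point that the paper leaves implicit.
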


\begin{proof}
For any $A\in \mathcal{B}(M)$ and any $g\in G$, we see that
\begin{align*}
(\pi_{\sharp}\mu_R^G)(Ag) & =\mu_R^G\left(\pi^{-1}(Ag)\right)
\\
&
=\mu_R^G\left(\left(\pi^{-1}(A)\right) g\right)
\\
&
=\mu_R^G\left(\pi^{-1}(A)\right)=(\pi_{\sharp}\mu_R^G)(A),
\end{align*}
where the second equality is by \eqref{eqn.ImageRelation}. The third equality follows from the right invariance of $\mu_R^G$. Thus, $\pi_{\sharp}\mu_R^G$ is $G$-invariant.
\end{proof}

Moreover, we see that $\Delta^{M}_{\mathcal{H}}$ is symmetric on $L^2\left(M, d\left(\pi_{\sharp}\mu_R^G\right)\right)$.  That is, for any $f,f^{\prime}\in C^{\infty}_c(M)$, we have
\begin{align*}
\int_Mf\Delta^{M}_{\mathcal{H}}f^{\prime}d\left(\pi_{\sharp}\mu_R^G\right) & =\int_G(f\circ \pi) \Delta^{G}_{\mathcal{H}}(f^{\prime}\circ \pi)d\mu_R^G
\\
&
=\int_G \Delta^{G}_{\mathcal{H}}(f\circ \pi)(f^{\prime}\circ \pi)d\mu_R^G
\\
&
=\int_M f^{\prime} \Delta^{M}_{\mathcal{H}}fd\left(\pi_{\sharp}\mu_R^G\right).
\end{align*}
The first and fourth equality follow from \eqref{eqn.LaplacianRelation} and the change of variable formula for pushforward measures \cite[Theorem 1, Chapter V \S6]{BourbakiBookIntegrationI2004} under the map $\pi(g)\longmapsto m\in M$. The second equality follows from  the fact that $\Delta^{G}_{\mathcal{H}}$ is symmetric on $L^2\left(G, d\mu_R^G\right)$. Clearly the quotient map $\pi$ induces an isometry on the corresponding $L^{2}$ spaces as follows. For any $f\in L^2\left(M, d\left(\pi_{\sharp}\mu^G_R\right)\right)$, we have

\begin{align} \label{eqn.L^2NormReferenceMeasureRelation}
\Vert f\Vert_{L^2\left(M, d\left(\pi_{\sharp}\mu^G_R\right)\right)}=\Vert f\circ \pi\Vert_{L^2\left(G, d\mu^{G}_R\right)}.
\end{align}

In Section~\ref{sec.HeatSemigrouponHomogeneousSpaces}, we choose $\pi_{\sharp}\mu^G_R$ as our reference measure on $M$ when we study the heat semigroup on $M$.

\subsubsection{Quasi-invariant measures}

We now describe another choice of a reference measure on $M$. By \cite[Theorem 2 in Chapter VII \S2]{BourbakiBookIntegrationII2004}, there exists a measure $\mu^M$ on $M$, not necessarily unique as we will see in Theorem~\ref{thm.Disintegration}, which is quasi-invariant under the action of $G$.

An important property of the measure $\mu^M$ is that it satisfies a \emph{disintegration theorem} as below, which we include for completeness. In the case when $H$ is a maximal torus of a connected compact Lie group $G$, this becomes the classical Weyl Integration Formula. 
\begin{theorem}[Theorem 2 and (7) of Chapter VII
\S2 in \cite{BourbakiBookIntegrationII2004}] \label{thm.Disintegration}

Let $G$ be a locally compact group and $H$ a closed subgroup. Let $q:M \rightarrow G$ be the cross section map. Then there exists a continuous function $\rho:G\rightarrow \mathbb{R}$ such that
\begin{align*}
\rho(hg)=\frac{m_H(h)}{m_G(h)}\rho(g)   
\end{align*}
for any $g\in G$ and any $h\in H$.
Moreover, there exists a quasi-invariant measure $\mu^M$ under the action of $G$ such that for any $f\in C(G)$ we have
\begin{align*}
\int_Gf(g)\rho(g)d\mu_R^G(g)=\int_M\int_Hf\left(h\circ_G q(m)\right)d\mu_R^H(h)d\mu^M(m).
\end{align*}
\end{theorem}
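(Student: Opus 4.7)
The plan is to construct the pair $(\rho, \mu^M)$ in two stages: first exhibit a continuous strictly positive \emph{Bruhat function} $\rho$ on $G$ whose twisting rule under left multiplication by $H$ is exactly the prescribed ratio of modular functions; then use $\rho$ to push the right Haar measure $\mu_R^G$ down to $M$ via the advertised disintegration formula, and verify that this prescription determines a Radon measure quasi-invariant under the right $G$-action.

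For the first stage I would work locally. Because $H$ is a closed subgroup of the locally compact group $G$, the quotient map $\pi : G \to M$ admits continuous local sections, so there is a locally finite open cover $\{U_\alpha\}$ of $M$ together with continuous sections $s_\alpha : U_\alpha \to G$. Choose a subordinate continuous partition of unity $\{\varphi_\alpha\}$ on $M$ and define
\[\rho(g) := \sum_\alpha \varphi_\alpha(\pi(g))\, \frac{m_H(h_\alpha(g))}{m_G(h_\alpha(g))},\]
where $h_\alpha(g) \in H$ is the unique element with $g = h_\alpha(g)\, s_\alpha(\pi(g))$ when $\pi(g) \in U_\alpha$. The cocycle identity $h_\alpha(hg) = h\, h_\alpha(g)$ combined with the homomorphism property of $m_H$ and $m_G$ yields $\rho(hg) = (m_H(h)/m_G(h))\,\rho(g)$ by direct substitution.

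For the second stage I would define a positive linear functional on $C_c(M)$. The averaging map $f \mapsto \bigl(\pi(g) \mapsto \int_H f(hg)\,d\mu_R^H(h)\bigr)$ carries $C_c(G)$ onto $C_c(M)$, and one sets $\mu^M(\phi) := \int_G f(g)\rho(g)\,d\mu_R^G(g)$ for any $f \in C_c(G)$ whose $H$-average represents $\phi$. The crux is well-definedness: if $\int_H f(hg)\,d\mu_R^H(h) \equiv 0$, one must show $\int_G f(g)\rho(g)\,d\mu_R^G(g) = 0$. This follows from Fubini, the substitution $g \mapsto h^{-1}g$ in $\mu_R^G$ (which produces a factor $m_G(h)$), and the twisting rule for $\rho$ (which produces the compensating factor $m_H(h)/m_G(h)$); together these collapse the inner $H$-integral so the double integral vanishes. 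Riesz representation then packages $\mu^M$ as a Radon measure on $M$, and unwinding the definition gives the stated disintegration identity.

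The main obstacle is orchestrating the three Haar-measure transformations — right invariance of $\mu_R^G$, the modular factor under left translation, and the cocycle for $\rho$ — so that they cancel cleanly; the transformation law for $\rho$ is engineered precisely for this cancellation, but verifying it rigorously requires careful bookkeeping of modular functions. Once $\mu^M$ is in hand, quasi-invariance under the right $G$-action drops out of the same formula: the translation $g \mapsto g g_0$ changes $\rho$ by a strictly positive continuous factor, which serves as the Radon-Nikodym density of the translated measure with respect to $\mu^M$. The non-uniqueness of $\mu^M$ noted in the theorem reflects the freedom to multiply $\rho$ by any continuous strictly positive $H$-invariant function on $G$, equivalently, a continuous strictly positive function on $M$ pulled back through $\pi$.
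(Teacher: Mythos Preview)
The paper does not prove this statement; it is quoted verbatim from Bourbaki as background and is introduced with the words ``which we include for completeness.'' There is therefore no proof in the paper to compare against.

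On the merits of your sketch: the second stage --- defining $\mu^M$ as a positive linear functional on $C_c(M)$ via the surjective averaging map $C_c(G)\to C_c(M)$, checking well-definedness by the modular-function bookkeeping you describe, and reading off quasi-invariance from the transformation of $\rho$ under right translation --- is exactly the standard argument and is correct.

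The first stage, however, has a genuine gap at the stated level of generality. For an arbitrary closed subgroup $H$ of a locally compact group $G$, the quotient map $\pi:G\to H\backslash G$ need \emph{not} admit continuous local sections, and continuous partitions of unity subordinate to an arbitrary open cover require paracompactness of $M$. Both issues disappear when $G$ is a Lie group (the setting the paper actually cares about), since then $\pi$ is a smooth submersion and $M$ is a manifold; but the theorem is stated for locally compact $G$. The classical construction of $\rho$ in Bourbaki and in Folland's \emph{A Course in Abstract Harmonic Analysis} sidesteps sections entirely: one starts from a nonnegative $\psi\in C_c(G)$ whose $H$-average is strictly positive on a prescribed compact set, patches such functions over a locally finite cover of $M$ by compacta to obtain a strictly positive $\Psi\in C(G)$ with $\int_H \Psi(hg)\,d\mu_R^H(h)<\infty$ for all $g$, and then sets
\[
\rho(g)\;=\;\frac{\Psi(g)\,}{\displaystyle\int_H \Psi(hg)\,\frac{m_G(h)}{m_H(h)}\,d\mu_R^H(h)}\,,
\]
from which the cocycle relation $\rho(hg)=\dfrac{m_H(h)}{m_G(h)}\,\rho(g)$ is immediate. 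If you restrict your claim to the Lie-group setting your construction is fine; otherwise replace the section-based definition of $\rho$ by this averaging construction.
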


\begin{remark}
Note that the existence and explicit form of $\mu^M$ both depend on the function $\rho$ which is not unique, so $\mu^M$ is not unique. However, all such measures $\mu^M$ have the same null sets by \cite[Theorem 1 of Chapter VII \S2 ]{BourbakiBookIntegrationII2004}. 
\end{remark}

\begin{remark}
Note that \cite{BourbakiBookIntegrationII2004} uses a left Haar measure on $G$ instead of $\mu_R^G$.  
\end{remark}
When the modular function of $G$ and that of $H$ coincide on $H$, $\mu^M$ is $G$-invariant by \cite[p. 462]{DriverGrossSaloff-Coste2009a} or \cite[Corollary 2, Chapter VII \S2]{BourbakiBookIntegrationII2004}. In particular, when $G$ is unimodular, by \cite[Proposition 10, Chapter VII \S2]{BourbakiBookIntegrationII2004} any closed subgroup $H$ is unimodular too. This is the case which applies to the examples in our paper. In \cite{DriverGrossSaloff-Coste2010}, the $G$-invariant measure $\mu^M$ is chosen as the reference measure on $M$. The disintegration theorem above has a simpler form as below and we refer to \cite[Proposition 4 (b), Chapter VII \S1]{BourbakiBookIntegrationII2004} or in \cite[Theorem 2.51]{FollandHABook} for more details.

\begin{proposition}[Disintegration Theorem] \label{prop.Disintegration}
Let $G$ be a locally compact group and $H$ is a closed subgroup such that the modular function of $G$ and that of $H$ coincide on $H$. Then there exists a $G$-invariant measure $\mu^M$ such that for any $f\in C(G)$, we have
\begin{align*}
\int_Gf(g)d\mu_R^G(g)=\int_M\int_Hf\left(h\circ_G q(m)\right)d\mu_R^H(h)d\mu^M(m).
\end{align*}
\end{proposition}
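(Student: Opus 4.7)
The proposition is a direct corollary of Theorem~\ref{thm.Disintegration} in the special case when the modular functions of $G$ and $H$ agree on $H$, so my plan is to carefully specialize that general disintegration result and then verify the improvement from quasi-invariance to genuine $G$-invariance.

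First I would invoke Theorem~\ref{thm.Disintegration} and examine the consistency requirement it imposes on $\rho$, namely
\[
\rho(hg) = \frac{m_H(h)}{m_G(h)}\,\rho(g), \qquad h \in H,\ g \in G.
\]
Under the hypothesis $m_H|_H = m_G|_H$, the prefactor equals $1$, so the condition reduces to $\rho(hg) = \rho(g)$. This says that $\rho$ is constant along right cosets of $H$ in $G$, and hence descends to a continuous function on $M = H\backslash G$. In particular, the constant function $\rho \equiv 1$ is admissible. Substituting this choice into the formula of Theorem~\ref{thm.Disintegration} immediately yields the identity claimed in the proposition.

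Next I would verify the $G$-invariance of $\mu^M$. The measure produced by Theorem~\ref{thm.Disintegration} is only quasi-invariant, with Radon--Nikodym cocycle determined by $\rho$; since $\rho \equiv 1$ the cocycle is trivial, and quasi-invariance upgrades to invariance. This is precisely the statement of \cite[Corollary 2, Chapter VII \S2]{BourbakiBookIntegrationII2004}, already quoted in the paragraph preceding the proposition. As a direct sanity check one can also apply the disintegration formula to the right translate $f(\,\cdot\, g_0)$ for fixed $g_0 \in G$: on the left, right-invariance of $\mu_R^G$ gives back $\int_G f\,d\mu_R^G$; on the right, the substitution $q(m)g_0 = h_0 \circ_G q(m\cdot g_0)$ for some $h_0 \in H$ depending measurably on $m$, combined with the right-invariance of $\mu_R^H$ applied to the inner integral, reduces the expression to the same formula with $\mu^M$ replaced by its $g_0$-translate. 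Uniqueness of the $H$-average then forces the translate of $\mu^M$ to equal $\mu^M$.

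There is no genuine obstacle here: the entire content of the proposition is already packaged inside Theorem~\ref{thm.Disintegration}, and the work consists in recognizing that the coinciding-modular-functions hypothesis is exactly what trivializes the quasi-invariance cocycle. The only mild care needed is the measurable selection implicit in writing $q(m)g_0 = h_0 \circ_G q(m g_0)$, which is handled by the existence of a Borel cross section $q$ assumed in Theorem~\ref{thm.Disintegration}.
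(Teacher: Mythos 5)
Your derivation is correct. The paper itself offers no proof of Proposition~\ref{prop.Disintegration} — it simply points to \cite[Proposition 4 (b), Chapter VII \S1]{BourbakiBookIntegrationII2004} and \cite[Theorem 2.51]{FollandHABook}, and to \cite[Corollary 2, Chapter VII \S2]{BourbakiBookIntegrationII2004} for the invariance — so what you have written is precisely the argument those citations package: under $m_G|_H=m_H$ the functional equation for $\rho$ in Theorem~\ref{thm.Disintegration} admits the solution $\rho\equiv 1$, the disintegration formula then reads as stated, and the quasi-invariance cocycle trivializes. Your supplementary check of invariance by right-translating $f$ is also sound; the one point worth making explicit is that passing from ``the translate of $\mu^M$ integrates all $H$-averages $f_H$ the same way as $\mu^M$'' to ``the translate equals $\mu^M$'' uses the surjectivity of the averaging map $C_c(G)\to C_c(M)$, $f\mapsto f_H$, which the paper records separately (via \cite[Proposition 2.50]{FollandHABook}); this, rather than any ``uniqueness of the $H$-average,'' is the precise ingredient needed.
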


\subsubsection{Connection between the measures on homogeneous spaces}

Now we would like to comment on the connection between the pushforward measure $\pi_{\sharp}(\mu_R^G)$ and a quasi-invariant measure $\mu^M$.

Generally, from \cite[Remark (1), Section 3, Chapter VII \S2]{BourbakiBookIntegrationII2004}, $\mu^M$ is a pseudo-image of $\rho \mu_R^G$ by $\pi$ as defined in \cite[Definition 1, Section 2, Chapter VI \S3]{BourbakiBookIntegrationI2004}.  When $H$ is compact, we can normalize $\mu_R^H$ and then $\mu^M$  coincides with $\pi_{\sharp}(\rho\mu_R^G)$ by \cite[Remark (2), Section 3, Chapter VII \S2]{BourbakiBookIntegrationII2004}.  From the above description, we see that $\pi_{\sharp}\mu_R^G$ and $\mu^M$ do not always coincide.

\subsection{Functions on homogeneous spaces}

There is a one-to-one correspondence between continuous functions with compact support on $G$ and $M$ as follows. For any $f\in C(G)$, we define 
\begin{align} \label{eqn.FunctionGtoH}
f_H(m):=\int_{H}f\left(h\circ_G q(m)\right)d\mu_R^H.
\end{align}
where $q: M \rightarrow G$ is the cross section map. We see that $f_H$ is a well-defined continuous function on $M$ and it is invariant under the left action of $H$. By \cite[p. 460]{DriverGrossSaloff-Coste2010}, $f_H\in C_c\left(M\right)$ if $f\in C_c(G)$. Moreover, the image of $C_c(G)$ under the map $f\mapsto f_H$ is $C_c\left(M\right)$, that is,  the map is surjective.

As for the opposite direction, \cite[Proposition 2.50]{FollandHABook} states that for any $f\in C_c(M)$, there exists $\widetilde{f}\in C_c(G)$ such that $\widetilde{f}_H=f$ and $q\left(\operatorname{supp}(f)\right)=\operatorname{supp}\left(\widetilde{f}\right)$.

There is a subtlety we would like to mention. Observe that given any $f\in C(M)$, we have $f\circ \pi\in C(G)$. However,
given any $f\in C_c(M)$, it is not always that $f\circ \pi\in C_c(G)$ and we only have that $f\circ \pi\in C_b(G)$.

\subsection{Heat semigroup and hypoelliptic heat kernel measure on homogeneous spaces} \label{sec.HeatSemigrouponHomogeneousSpaces}
We start with a discussion on the heat semigroup on $M$. In \cite{Hunt1956a}, Hunt discussed semigroups on homogeneous spaces. In addition to a number of other assumptions, in his setting $M$ is equipped with a $G$-invariant Riemannian structure. However, we look at a more general setting here.

\begin{definition}
We define the \emph{heat semigroup} $\{P_t^M\}_{t\geqslant 0}$ on $M$ by
\begin{align} \label{eqn.SemigroupRelation}
\left(P_t^Mf\right)\circ \pi=P_t^G(f\circ \pi) 
\end{align}
for any $f\in \mathcal{B}_b\left(M\right) \cap L^2\left(M, d\left(\pi_{\sharp}\mu^G_R\right)\right)$.
\end{definition}
Note that $\{P_t^M\}_{t\geqslant 0}$ is a family of linear operators on $L^2\left(M, d\left(\pi_{\sharp}\mu^G_R\right)\right)$.

\begin{lemma}
The family of linear operators $\{P_t^M\}_{t\geqslant 0}$ is a strongly continuous positivity preserving Markov semigroup on $L^2\left(M, d\left(\pi_{\sharp}\mu^G_R\right)\right)$.
\end{lemma}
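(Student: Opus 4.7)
The plan is to transfer each property of $P_t^G$ to $P_t^M$ using the defining relation \eqref{eqn.SemigroupRelation}, the isometry \eqref{eqn.L^2NormReferenceMeasureRelation}, and the surjectivity of $\pi$. First I would verify that $P_t^M$ is well-defined on $L^2\left(M, d\left(\pi_{\sharp}\mu^G_R\right)\right)$. Given $f \in L^2\left(M, d\left(\pi_{\sharp}\mu^G_R\right)\right)$, by \eqref{eqn.L^2NormReferenceMeasureRelation} we have $f \circ \pi \in L^2\left(G, d\mu^G_R\right)$, and the key point is that $P_t^G(f \circ \pi)$ descends through $\pi$ to a (unique, modulo null sets) function on $M$. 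Since $\pi(hg) = \pi(g)$ for every $h \in H$, the pullback $f \circ \pi$ is left-$H$-invariant; and since every $\widetilde{X_i}$ is left-invariant, each $\widetilde{X_i}$ commutes with the left translation operators $L_h u(g) := u(hg)$ for $h \in G$, and hence so do $\Delta_{\mathcal{H}}^G$ and $P_t^G$. Therefore $L_h P_t^G(f \circ \pi) = P_t^G\bigl(L_h(f \circ \pi)\bigr) = P_t^G(f \circ \pi)$ for every $h \in H$, so $P_t^G(f \circ \pi)$ is constant on the left cosets $Hg$ and \eqref{eqn.SemigroupRelation} unambiguously determines $P_t^M f$ on $M$. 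This is the main obstacle: the descent step hinges crucially on left-invariance of the sub-Laplacian, i.e., on $P_t^G$ preserving the subspace of left-$H$-invariant $L^2$ functions.

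Once well-definedness is established, the remaining properties are inherited essentially for free. Contractivity on $L^2\left(M, d\left(\pi_{\sharp}\mu^G_R\right)\right)$ follows from \eqref{eqn.L^2NormReferenceMeasureRelation} and the fact that $P_t^G$ is an $L^2$-contraction on $L^2\left(G, d\mu_R^G\right)$:
\begin{align*}
\|P_t^M f\|_{L^2(M, d(\pi_{\sharp}\mu_R^G))} = \|P_t^G(f \circ \pi)\|_{L^2(G, d\mu_R^G)} \leqslant \|f \circ \pi\|_{L^2(G, d\mu_R^G)} = \|f\|_{L^2(M, d(\pi_{\sharp}\mu_R^G))}.
\end{align*}
The semigroup property follows by applying the pullback identity twice,
\begin{align*}
(P_{t+s}^M f) \circ \pi = P_{t+s}^G(f \circ \pi) = P_t^G P_s^G(f \circ \pi) = P_t^G\bigl((P_s^M f) \circ \pi\bigr) = (P_t^M P_s^M f) \circ \pi,
\end{align*}
combined with surjectivity of $\pi$. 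Strong continuity at $t = 0^{+}$ is transferred analogously: $\|P_t^M f - f\|_{L^2(M)} = \|P_t^G(f \circ \pi) - f \circ \pi\|_{L^2(G)} \to 0$ as $t \to 0^{+}$ by strong continuity of the heat semigroup on $G$.

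For positivity preservation, if $f \geqslant 0$ holds $\pi_{\sharp}\mu_R^G$-a.e., then $f \circ \pi \geqslant 0$ holds $\mu_R^G$-a.e., and since $P_t^G$ has the non-negative transition kernel density $p_t^G$ by \eqref{eqn.HeatKernelMeasureDF}, we obtain $P_t^G(f \circ \pi) \geqslant 0$ $\mu_R^G$-a.e., whence $P_t^M f \geqslant 0$ $\pi_{\sharp}\mu_R^G$-a.e. Finally, for the Markov property, the conservativity $P_t^G 1 = 1$ of the heat semigroup on $G$ (equivalently, $\mu_t^G(g, G) = 1$) gives $(P_t^M 1) \circ \pi = P_t^G 1 = 1$ and hence $P_t^M 1 = 1$; combining with positivity preservation yields $0 \leqslant P_t^M f \leqslant 1$ whenever $0 \leqslant f \leqslant 1$, which is the Markov property. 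Every step of the argument after well-definedness is a mechanical pullback along \eqref{eqn.SemigroupRelation} using \eqref{eqn.L^2NormReferenceMeasureRelation}.
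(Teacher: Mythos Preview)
Your proposal is correct and follows essentially the same pullback strategy as the paper: each property of $P_t^G$ is transferred to $P_t^M$ via the defining relation \eqref{eqn.SemigroupRelation} and the isometry \eqref{eqn.L^2NormReferenceMeasureRelation}. Two minor differences worth noting: you include an explicit well-definedness argument (that $P_t^G(f\circ\pi)$ is left-$H$-invariant and hence descends to $M$), which the paper takes for granted; conversely, the paper's proof additionally verifies symmetry of $P_t^M$ on $L^2\bigl(M, d(\pi_\sharp\mu_R^G)\bigr)$, which is not part of the stated lemma but is used afterwards.
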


\begin{proof}
We know that $\{P_t^G\}_{t\geqslant 0}$ is a strongly continuous, positivity preserving Markov semigroup on $L^2\left(G, d\mu^G_R\right)$. Relying on this fact and using the definition of $\{P_t^M\}_{t\geqslant 0}$ together with \eqref{eqn.L^2NormReferenceMeasureRelation}, we just need to apply the change of variable formula for pushforward measures \cite[Theorem 1, Chapter V \S6]{BourbakiBookIntegrationI2004} to the quotient map $\pi(g)\longmapsto m\in M$ to check that $\{P_t^M\}_{t\geqslant 0}$ also satisfies these properties.

By \eqref{eqn.SemigroupRelation}, we see that $\{P_t^M\}_{t\geqslant 0}$ is positivity preserving since $\{P_t^G\}_{t\geqslant 0}$ is positivity preserving. For any $f, h\in L^2\left(M, d\left(\pi_{\sharp}\mu^G_R\right)\right)$ we see that
\begin{align*}
&\int_M P_t^Mf \cdot hd\left(\pi_{\sharp}\mu^G_R\right) =\int_{G} \left(\left(P_t^Mf\right)\circ \pi\right) \cdot (h\circ \pi)d\mu^G_R
\\
&
=\int_{G} \left(P_t^G(f\circ \pi)\right)  \cdot (h\circ \pi)d\mu^G_R
=\int_{G} \left(P_t^G(h\circ \pi)\right)  \cdot (f\circ \pi)d\mu^G_R
\\
&
=\int_{G} \left(\left(P_t^Mh\right)\circ \pi\right) \cdot (f\circ \pi)d\mu^G_R
=\int_M P_t^Mh \cdot fd\left(\pi_{\sharp}\mu^G_R\right),
\end{align*} 
where the third equality follows from the symmetry of $\{P_t^G\}_{t\geqslant 0}$, therefore $\{P_t^M\}_{t\geqslant 0}$ is symmetric as well. 

For any $s, t>0$ we have
\begin{align*}
\left(P_t^MP_s^Mf\right)\circ \pi & =P_t^G\left(\left(P_s^Mf\right)\circ \pi\right)=P_t^G\left(P_s^G(f\circ \pi)\right)
\\
&
=P_{t+s}^G(f\circ \pi)=\left(P_{t+s}^Mf\right)\circ \pi,
\end{align*}
where the third equality is by the semigroup property of $\{P_t^G\}_{t\geqslant 0}$, so $\{P_t^M\}_{t\geqslant 0}$ has the semigroup property as well. Finally, we can use the isometry 

\begin{align*}
& \Vert P_t^Mf\Vert^2_{L^2\left(M, d\left(\pi_{\sharp}\mu^G_R\right)\right)} =\Vert \left(P_t^Mf\right) \circ \pi\Vert^2_{L^2\left(G, d\mu^{G}_R\right)}
\\
&
=\Vert P_t^G(f\circ \pi)\Vert^2_{L^2\left(G, d\mu^{G}_R\right)}
\leqslant \Vert f\circ \pi\Vert^2_{L^2\left(G, d\mu^{G}_R\right)}
\\
&
=\Vert f\Vert^2_{L^2\left(M, d\left(\pi_{\sharp}\mu^G_R\right)\right)},
\end{align*}
to see that $\{P_t^M\}_{t\geqslant 0}$ satisfies the contraction property. 

For any $f\in L^2\left(M,  d\left(\pi_{\sharp}\mu^G_R\right)\right)$ with $0\leqslant f\left( x \right)\leqslant 1$ for $\pi_{\sharp}\mu^G_R$-a.e. $x$, we have $0\leqslant f\circ\pi \leqslant 1$ for $\mu^G_R$-a.e. and $0 \leqslant \left(P_t^Mf\right) \circ \pi =P_t^G(f\circ \pi) \leqslant 1$ for $\mu^G_R$-a.e. by the Markovian property of $\{P_t^G\}_{t\geqslant 0}$, so $0 \leqslant P_t^Mf \leqslant 1$ for $\pi_{\sharp}\mu^G_R$-a.e. and $\{P_t^M\}_{t\geqslant 0}$ is Markovian.

Moreover, for any $f\in L^2\left(M, d\left(\pi_{\sharp}\mu^G_R\right)\right)$ we have
\begin{align*}
\Vert P_t^Mf-f\Vert^2_{L^2\left(M, d\left(\pi_{\sharp}\mu^G_R\right)\right)} & =\Vert \left(P_t^Mf\right) \circ \pi-f\circ \pi\Vert^2_{L^2\left(G, d\mu^{G}_R\right)}
\\
&
=\Vert P_t^G(f\circ \pi)-f\circ \pi\Vert^2_{L^2\left(G, d\mu^{G}_R\right)}
\xrightarrow[t\to 0]{} 0
\end{align*}
by the strong continuity of $\{P_t^G\}_{t\geqslant 0}$, thus $\{P_t^M\}_{t\geqslant 0}$ is strongly continuous as well.
\end{proof}
As a result, there exists a self-adjoint infinitesimal generator of $\{P_t^M\}_{t\geqslant 0}$ on $L^2\left(M, d\left(\pi_{\sharp}\mu^G_R\right)\right)$ by \cite[Lemma 1.31]{FukushimaOshimaTakedaBook2011}. This operator is a self-adjoint extension of the sub-Laplacian $\Delta_{\mathcal{H}}^M$. For simplicity, we still denote it by $\Delta_{\mathcal{H}}^M$. The domain $\mathcal{D}\left(\Delta_{\mathcal{H}}^M\right) \subseteq L^2\left(M, d\left(\pi_{\sharp}\mu^G_R\right)\right)$ of this generator is given by
\begin{align} \label{eqn.DomainofSubLaplacianonHomogeneousSpace}
& \mathcal{D}\left(\Delta_{\mathcal{H}}^M\right)
\\
& =\left\{f\in L^2\left(M, d\left(\pi_{\sharp}\mu^G_R\right)\right): \lim_{t\to 0}\frac{P_t^Mf-f}{t} \, \text{exists in} \, L^2\left(M, d\left(\pi_{\sharp}\mu^G_R\right)\right)\right\}.
\notag
\end{align}

\begin{lemma}
For any $f\in \mathcal{D}\left(\Delta_{\mathcal{H}}^{M}\right)$, we have $f\circ \pi \in \mathcal{D}\left(\Delta_{\mathcal{H}}^G\right)$ and
\begin{align} \label{eqn.LaplacianRelationII}
\left(\Delta_{\mathcal{H}}^{M}f\right) \circ \pi=\Delta_{\mathcal{H}}^G(f\circ \pi).
\end{align}
\end{lemma}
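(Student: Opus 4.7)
The plan is to use the characterization of the domain of the generator of $\{P_t^M\}_{t\geqslant 0}$ given in \eqref{eqn.DomainofSubLaplacianonHomogeneousSpace}, together with the intertwining relation \eqref{eqn.SemigroupRelation} and the $L^2$ isometry \eqref{eqn.L^2NormReferenceMeasureRelation}. The key observation is that pulling back along $\pi$ turns the limit defining $\Delta_{\mathcal{H}}^M$ on $M$ into the limit defining $\Delta_{\mathcal{H}}^G$ on $G$, with equal $L^2$ norms on both sides.

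First, I would fix $f\in\mathcal{D}(\Delta_{\mathcal{H}}^M)$, so by \eqref{eqn.DomainofSubLaplacianonHomogeneousSpace} the limit $g:=\lim_{t\to 0}\tfrac{P_t^M f-f}{t}$ exists in $L^2(M,d(\pi_\sharp\mu_R^G))$ and equals $\Delta_{\mathcal{H}}^M f$. Using \eqref{eqn.SemigroupRelation} pointwise $(\pi_\sharp\mu_R^G)$-almost everywhere and the fact that composition with $\pi$ commutes with affine combinations, I get
\begin{equation*}
\frac{P_t^G(f\circ\pi)-f\circ\pi}{t}=\frac{(P_t^M f)\circ\pi-f\circ\pi}{t}=\left(\frac{P_t^M f-f}{t}\right)\circ\pi.
\end{equation*}
Applying the isometry \eqref{eqn.L^2NormReferenceMeasureRelation} to $\tfrac{P_t^M f-f}{t}-\Delta_{\mathcal{H}}^M f$ gives
\begin{equation*}
\left\|\frac{P_t^G(f\circ\pi)-f\circ\pi}{t}-(\Delta_{\mathcal{H}}^M f)\circ\pi\right\|_{L^2(G,d\mu_R^G)}=\left\|\frac{P_t^M f-f}{t}-\Delta_{\mathcal{H}}^M f\right\|_{L^2(M,d(\pi_\sharp\mu_R^G))}\xrightarrow[t\to 0]{}0.
\end{equation*}
By the corresponding characterization of $\mathcal{D}(\Delta_{\mathcal{H}}^G)$ as the set of $L^2$ functions for which the difference quotient $\tfrac{P_t^G u-u}{t}$ converges in $L^2(G,d\mu_R^G)$ as $t\to 0$, this immediately yields $f\circ\pi\in\mathcal{D}(\Delta_{\mathcal{H}}^G)$ together with the identity $\Delta_{\mathcal{H}}^G(f\circ\pi)=(\Delta_{\mathcal{H}}^M f)\circ\pi$, which is \eqref{eqn.LaplacianRelationII}.

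The only step requiring extra care is the validity of the intertwining relation \eqref{eqn.SemigroupRelation} for arbitrary $f\in L^2(M,d(\pi_\sharp\mu_R^G))$, since it was originally defined for $f\in\mathcal{B}_b(M)\cap L^2(M,d(\pi_\sharp\mu_R^G))$. This is handled by approximation: truncating $f$ as $f_N:=(f\wedge N)\vee(-N)$ produces a sequence in the defining class which converges to $f$ in $L^2(M,d(\pi_\sharp\mu_R^G))$, and the $L^2$ contractivity of both $P_t^G$ and $P_t^M$ together with \eqref{eqn.L^2NormReferenceMeasureRelation} let us pass to the limit on both sides of $(P_t^M f_N)\circ\pi=P_t^G(f_N\circ\pi)$. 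This is the only mildly technical point; everything else is a direct transport of the definition of the generator through the isometric pullback $\pi^\ast$.
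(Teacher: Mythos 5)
Your proof is correct and follows essentially the same route as the paper's: transfer the $L^2$ convergence of the difference quotient through the isometry \eqref{eqn.L^2NormReferenceMeasureRelation} and the intertwining relation \eqref{eqn.SemigroupRelation}, then invoke the standard difference-quotient characterization of the generator's domain. The only difference is that you explicitly justify extending \eqref{eqn.SemigroupRelation} from $\mathcal{B}_b(M)\cap L^2$ to all of $L^2$ by truncation and $L^2$-contractivity, a technical point the paper passes over silently.
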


\begin{proof}
For any $f\in \mathcal{D}\left(\Delta_{\mathcal{H}}^{M}\right) \subseteq L^2\left(M, d\left(\pi_{\sharp}\mu^G_R\right)\right)$, we see that the existence of $\left(\lim_{t\to 0}\frac{P_t^Mf-f}{t}\right)\circ \pi$ in $L^2\left(G, d\mu^G_R\right)$ is guaranteed by the existence of $\lim_{t\to 0}\frac{P_t^Mf-f}{t}$ in $L^2\left(M, d\left(\pi_{\sharp}\mu^G_R\right)\right)$ using \eqref{eqn.L^2NormReferenceMeasureRelation}. This means
\begin{align*}
\left(\Delta_{\mathcal{H}}^{M}f\right) \circ \pi & =\left(\lim_{t\to 0}\frac{P_t^Mf-f}{t}\right)\circ \pi
\\
&
=\lim_{t\to 0}\left(\frac{P_t^Mf-f}{t}\circ \pi\right)=\lim_{t\to 0}\frac{\left(P_t^Mf\right)\circ \pi-f\circ \pi}{t}
\\
&
=\lim_{t\to 0}\frac{P_t^G(f\circ \pi)-(f\circ \pi)}{t}=\Delta_{\mathcal{H}}^G(f\circ \pi)
\end{align*}
in $L^2\left(G, d\mu^G_R\right)$ where the third equality is by \eqref{eqn.SemigroupRelation}. Thus, $f\circ \pi \in \mathcal{D}\left(\Delta_{\mathcal{H}}^G\right)$ and \eqref{eqn.LaplacianRelationII} follows from the above computation.
\end{proof}

By \eqref{eqn.LaplacianRelationII} and \eqref{eqn.SemigroupRelation}, we see that the semigroup $P_t^M$ satisfies the following heat equation
\begin{align*}
& \frac{d}{dt}P_t^Mf=\frac{1}{2}P_t^M\Delta^{M}_{\mathcal{H}} f,
\\
&
\lim_{t\to 0}P_t^Mf=f
\end{align*}
for any $f\in \mathcal{D}\left(\Delta_{\mathcal{H}}^{M}\right)$. Next we describe the heat kernel measure on $M$ using this heat semigroup on $M$ similarly to \cite{DriverGrossSaloff-Coste2010}.

\begin{definition} \label{df.HeatKernelMeasureonQuotientSpace}
We call the pushforward measure $\pi_{\sharp}\mu_t^{G}$ of the heat kernel measure $\mu_t^G$ on $G$ to $M$ by $\pi$ the \emph{heat kernel measure} $\mu_t^{M}$ on $M$. That is, for any $A\in\mathcal{B}\left(M\right)$
\begin{align}
\mu_t^{M}(A):=\mu_t^{G}\left(\pi^{-1}(A)\right).
\end{align}
\end{definition}
In particular, for any $A\in\mathcal{B}\left(M\right)$ we have
\begin{align*}
\mu_t^{M}(A)=P_t^M\mathbbm{1}_{A}(He).
\end{align*}
Motivated by \cite[Theorem 6.15]{DriverGrossSaloff-Coste2010}, we can characterize $\mu_t^{M}$ when $G$ is connected and $H$ is only assumed to be a closed subgroup, and we give an intrinsic description of $\mu_t^{M}$ below. In particular, this theorem proves part (2) of Theorem~\ref{thm.MainTheorem}.

\begin{theorem} 
The family of probability measures $\{\mu_t^M:t>0\}$ is the unique family of probability measures $\{\lambda_t:t>0\}$ on $M$ such that the following heat equation holds
\begin{align*}
& \frac{d}{dt} \int_{M} fd\lambda_t=\frac{1}{2}\int_{M} \Delta_{\mathcal{H}}^Mfd\lambda_t,
\\
&
\lim_{t \to 0} \int_{M} fd\lambda_t=f(He)
\end{align*}
for any $f\in \mathcal{D}\left(\Delta_{\mathcal{H}}^M\right)$.
\end{theorem}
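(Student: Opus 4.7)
The plan is to lift both existence and uniqueness to the analogous statement on $G$, exploiting the pushforward definition $\mu_t^M = \pi_\sharp \mu_t^G$, the intertwining relation $(\Delta_{\mathcal{H}}^M f)\circ \pi = \Delta_{\mathcal{H}}^G(f\circ \pi)$ from \eqref{eqn.LaplacianRelationII}, and the semigroup relation $(P_t^M f)\circ \pi = P_t^G(f\circ \pi)$ from \eqref{eqn.SemigroupRelation}.

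For existence, I would start by applying the change of variable formula for pushforward measures to write
\[
\int_M f \, d\mu_t^M = \int_G (f \circ \pi) \, d\mu_t^G
\]
for $f \in \mathcal{D}(\Delta_{\mathcal{H}}^M)$. Since such $f$ gives $f \circ \pi \in \mathcal{D}(\Delta_{\mathcal{H}}^G)$, I can invoke the known heat equation satisfied by $\mu_t^G$ on $G$ (with initial condition $\delta_e$), differentiate under the integral, and then push back down using \eqref{eqn.LaplacianRelationII} to recover
\[
\frac{d}{dt}\int_M f \, d\mu_t^M = \frac{1}{2}\int_M \Delta_{\mathcal{H}}^M f \, d\mu_t^M,
\]
together with $\lim_{t\to 0}\int_M f\, d\mu_t^M = (f\circ \pi)(e) = f(He)$.

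For uniqueness, I would use the standard duality trick. Given another candidate family $\{\lambda_t\}_{t>0}$, fix $T>0$ and $f \in \mathcal{D}(\Delta_{\mathcal{H}}^M)$, and consider
\[
\phi(s) := \int_M P^M_{T-s}f \, d\lambda_s, \qquad s \in (0,T).
\]
Since $P^M_{T-s}f \in \mathcal{D}(\Delta_{\mathcal{H}}^M)$ and $\tfrac{d}{ds}P^M_{T-s}f = -\tfrac{1}{2}\Delta_{\mathcal{H}}^M P^M_{T-s}f$, combining the backward time derivative of $P^M_{T-s}f$ with the forward heat equation applied to $\lambda_s$ yields $\phi'(s)=0$. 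Sending $s\to 0^+$ and $s\to T^-$ and using the initial conditions then gives $\int_M f \, d\lambda_T = P^M_T f(He)$. The same identity holds for $\mu_T^M$ by the existence half (or directly from $\mu_t^M(A)=P_t^M \mathbbm 1_A(He)$), so $\int_M f\, d\lambda_T = \int_M f\, d\mu_T^M$ for every $f \in \mathcal{D}(\Delta_{\mathcal{H}}^M)$. Because $\mathcal{D}(\Delta_{\mathcal{H}}^M)$ is dense in $C_b(M)$ in a sense sufficient to separate finite Radon measures, $\lambda_T = \mu_T^M$.

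The main obstacle I expect is the justification of the differentiations and the boundary passages in the $\phi$ argument: differentiating under the integral with respect to the probability measure $\lambda_s$ requires controlling $P^M_{T-s}f$ and $\Delta_{\mathcal{H}}^M P^M_{T-s}f$ uniformly in $s$, and taking $s\to 0^+$ uses the initial condition for $\lambda_s$, which is formulated only for functions in $\mathcal{D}(\Delta_{\mathcal{H}}^M)$. This is handled by choosing a dense core (e.g.\ images under $f\mapsto f\circ\pi$ pulled back from $C_c^\infty$ test functions on $G$, or spectral truncations) on which all quantities are bounded and continuous in $s$, and then extending by density. Everything else is a routine transfer of known facts about $\mu_t^G$ and $P_t^G$ through the quotient map $\pi$.
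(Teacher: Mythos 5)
Your existence argument---pulling $f$ back to $G$ via $\pi$, invoking the heat equation for $\mu_t^G$, and pushing forward with the change of variables formula and \eqref{eqn.LaplacianRelationII}---is exactly the paper's proof. For uniqueness the paper simply appeals to ``the uniqueness of the solution to the heat equation,'' so your duality argument with $\phi(s)=\int_M P^M_{T-s}f\,d\lambda_s$ (together with your honest caveats about justifying the differentiation and the boundary limits) supplies detail the paper omits rather than diverging from it; the proposal is correct and takes essentially the same route.
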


\begin{proof}
For any $f\in \mathcal{D}\left(\Delta_{\mathcal{H}}^{M}\right)$, we have $f\circ \pi \in \mathcal{D}\left(\Delta_{\mathcal{H}}^G\right)$ and the heat equation on $G$ holds for $f\circ \pi$ as follows
\begin{align*}
& \frac{d}{dt} \int_{G} f\circ \pi d\mu_t^G=\frac{1}{2}\int_{G} \Delta_{\mathcal{H}}^G (f\circ \pi) d\mu_t^G,
\\
&
\lim_{t \to 0} \int_{G} f\circ \pi d\mu_t^G=(f\circ \pi)(e).
\end{align*}
Using again the change of variable formula for pushforward measures \cite[Theorem 1, Chapter V \S6]{BourbakiBookIntegrationI2004} under the quotient map $\pi(g)\longmapsto m\in M$, we see that
\begin{align*}
& \frac{d}{dt} \int_{M} fd\left(\pi_{\sharp}\mu_t^{G}\right)=\frac{1}{2}\int_{M} \Delta_{\mathcal{H}}^Mfd\left(\pi_{\sharp}\mu_t^{G}\right),
\\
&
\lim_{t \to 0} \int_{M} fd\left(\pi_{\sharp}\mu_t^{G}\right)=f(He)
\end{align*}
by \eqref{eqn.LaplacianRelationII}. By the uniqueness of the solution to the heat equation, we see that $\{\mu_t^M:t>0\}=\{\pi_{\sharp}\mu_t^{G}:t>0\}$ is the unique family of probability measures on $M$ such that the heat equation holds.
\end{proof}

Furthermore, for any $f\in \mathcal{D}\left(\Delta_{\mathcal{H}}^M\right)$, using the change of variable formula again and definition of $\mu_t^{M}$ in Definition~\ref{df.HeatKernelMeasureonQuotientSpace}, we have
\begin{align} \label{eqn.L^2NormRelation}
\Vert f\Vert_{L^2\left(M, d\mu_t^{M}\right)}=\Vert f\circ \pi\Vert_{L^2\left(G, d\mu_t^{G}\right)}.
\end{align}

\subsection{Dirichlet forms associated to the heat kernel measure on homogeneous spaces}
Consider the Dirichlet form with respect to the heat kernel measure $\mu_t^M$ on $M$

\begin{align}
\mathcal{E}^0_{\mu_t^M}(f, h):=\int_{M} \left\langle \nabla_{\mathcal{H}}^{M}f, \nabla_{\mathcal{H}}^{M}h\right\rangle_{\mathcal{H}} d\mu_t^M
\end{align}
for any $f, h\in C^{\infty}_c\left(M\right)$.

\begin{notation}
We denote $\mathcal{E}_{\mu_t^{G}}$ by $\mathcal{E}_{G}$ and $\mathcal{E}^0_{\mu_t^{M}}$ by $\mathcal{E}^0_{M}$ respectively.
\end{notation}

\begin{lemma} \label{lem.DomainRelation}
For any $f\in C^{\infty}_c\left(M\right)$, we have $f\circ \pi \in \mathcal{D}\left(\mathcal{E}_{G}\right)$, and for any $f, h \in C^{\infty}_c\left(M\right)$, we have
\begin{align} \label{eqn.DirichletFormRelation}
\mathcal{E}^0_{M}\left(f, h\right)=\mathcal{E}_{G}\left(f\circ\pi, h\circ\pi\right).
\end{align}
\end{lemma}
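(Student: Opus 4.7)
The plan is to verify both assertions by invoking the Lust-Piquard description \eqref{eqn.DomainWeightedDirichletForm} of $\mathcal{D}(\mathcal{E}_G)$ and combining it with the lift-project relations \eqref{eqn.VectorRelation}, \eqref{eqn.GradientRelation}, \eqref{eqn.L^2NormRelation} and the change of variables under the pushforward $\mu_t^M=\pi_{\sharp}\mu_t^G$. The main subtlety, already highlighted in the paper, is that $f\circ\pi$ for $f\in C^{\infty}_c(M)$ is only in $C^{\infty}_b(G)$ rather than $C^{\infty}_c(G)$, since $\pi^{-1}(\operatorname{supp} f)$ is $H$-saturated and hence typically non-compact. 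Thus one cannot simply plug $f\circ\pi$ into $\mathcal{E}^0_{\mu_t^G}$, and must instead argue that $f\circ\pi$ lies in the domain of the closure.

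For the membership $f\circ\pi\in\mathcal{D}(\mathcal{E}_G)$, I would check the two conditions in \eqref{eqn.DomainWeightedDirichletForm} directly. First, since $f$ is bounded on $M$ and $\mu_t^M$ is a probability measure, $f\in L^2(M,d\mu_t^M)$, and \eqref{eqn.L^2NormRelation} (applicable since $f\in\mathcal{D}(\Delta^M_{\mathcal{H}})\cap C^{\infty}_c(M)$, in particular bounded and smooth) yields $f\circ\pi\in L^2(G,d\mu_t^G)$. Second, a direct computation using \eqref{eqn.VectorRelation} and the chain rule gives, for each $i=1,\ldots,n$,
\begin{align*}
\widetilde{X_i}(f\circ\pi)(g)=\left((d\pi_g(\widetilde{X_i}))f\right)(\pi(g)),
\end{align*}
so by Cauchy-Schwarz and \eqref{eqn.GradientRelation} one has the pointwise bound
\begin{align*}
|\widetilde{X_i}(f\circ\pi)(g)|^2\leqslant |\nabla^G_{\mathcal{H}}(f\circ\pi)|^2_{\mathcal{H}}(g)=|\nabla^M_{\mathcal{H}}f|^2_{\mathcal{H}}(\pi(g)).
\end{align*}
Since $f\in C^{\infty}_c(M)$, the function $|\nabla^M_{\mathcal{H}}f|_{\mathcal{H}}$ is bounded, hence $\widetilde{X_i}(f\circ\pi)$ is bounded, and therefore in $L^2(G,d\mu_t^G)$.

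For the identity, the Lust-Piquard characterization identifies $\mathcal{E}_G$ on its domain with the gradient integral (this is the standard extension of a closable Dirichlet form of gradient type to its closure, read off from the description of $\mathcal{D}(\mathcal{E}_G)$ via $L^2$-gradients); applied to $u=f\circ\pi$ and $v=h\circ\pi$, this gives
\begin{align*}
\mathcal{E}_G(f\circ\pi,h\circ\pi)=\int_G\left\langle\nabla^G_{\mathcal{H}}(f\circ\pi),\nabla^G_{\mathcal{H}}(h\circ\pi)\right\rangle_{\mathcal{H}}d\mu_t^G.
\end{align*}
The bilinear analogue of \eqref{eqn.GradientRelation} follows by polarizing the norm identity on the pairs $(f\pm h)\circ\pi=(f\pm h)\circ\pi$ (using linearity of the gradient), yielding the pointwise relation
\begin{align*}
\left\langle\nabla^G_{\mathcal{H}}(f\circ\pi),\nabla^G_{\mathcal{H}}(h\circ\pi)\right\rangle_{\mathcal{H}}=\left\langle\nabla^M_{\mathcal{H}}f,\nabla^M_{\mathcal{H}}h\right\rangle_{\mathcal{H}}\circ\pi.
\end{align*}
Inserting this and applying the change of variables for pushforward measures under $\pi$, together with $\mu_t^M=\pi_{\sharp}\mu_t^G$ from Definition~\ref{df.HeatKernelMeasureonQuotientSpace}, concludes
\begin{align*}
\mathcal{E}_G(f\circ\pi,h\circ\pi)=\int_M\left\langle\nabla^M_{\mathcal{H}}f,\nabla^M_{\mathcal{H}}h\right\rangle_{\mathcal{H}}d\mu_t^M=\mathcal{E}^0_M(f,h).
\end{align*}

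The chief obstacle throughout is precisely the loss of compact support upon lifting via $\pi$, which prevents one from testing $\mathcal{E}^0_G$ against $f\circ\pi$ within its core; this is resolved by invoking the explicit domain description \eqref{eqn.DomainWeightedDirichletForm}, after which the argument reduces to the pointwise gradient relation and a routine application of the pushforward change of variables.
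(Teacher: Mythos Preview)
Your proposal is correct and follows essentially the same route as the paper: you use the Lust-Piquard domain description \eqref{eqn.DomainWeightedDirichletForm} to place $f\circ\pi$ in $\mathcal{D}(\mathcal{E}_G)$ via the $L^2$-isometry \eqref{eqn.L^2NormRelation} and the relation $\widetilde{X_i}(f\circ\pi)=(\dot{X}_if)\circ\pi$, and then deduce the Dirichlet-form identity from the pointwise gradient relation \eqref{eqn.GradientRelation} together with the pushforward change of variables, with polarization handling the bilinear case. The only cosmetic difference is that the paper applies the isometry \eqref{eqn.L^2NormRelation} directly to each $\dot{X}_if\in L^2(M,d\mu_t^M)$, whereas you bound $|\widetilde{X_i}(f\circ\pi)|$ pointwise by $|\nabla_{\mathcal{H}}^M f|_{\mathcal{H}}\circ\pi$ and use that $\mu_t^G$ is a probability measure; both arrive at the same conclusion.
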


\begin{proof}
For any $f\in C^{\infty}_c\left(M\right)$, we see that $f\circ \pi\in L^2\left(G, d\mu_t^{G}\right)$ by \eqref{eqn.L^2NormRelation}. Also, from the smoothness of $f$ and the fact that $f$ has compact support, we have $\dot{X}_if \in L^2\left(M, d\mu_t^{M}\right)$ and so $X_i( f\circ \pi)=\left(\dot{X}_if\right)\circ \pi \in L^2\left(G, d\mu_t^{G}\right)$ for $i=1,\cdots,n$ using the isometry \eqref{eqn.L^2NormRelation}. Thus, $f\circ \pi \in \mathcal{D}\left(\mathcal{E}_{G}\right)$. Then we can use the change of variable formula and \eqref{eqn.GradientRelation} to obtain 
\begin{align*}
\mathcal{E}^0_{M}\left(f\right)=\mathcal{E}_{G}\left(f\circ\pi\right)
\end{align*}
which implies \eqref{eqn.DirichletFormRelation} using polarization.
\end{proof}

By \eqref{eqn.L^2NormRelation} and \eqref{eqn.DirichletFormRelation}, we have 

\begin{align} \label{eqn.DirichletNormRelation}
\Vert f\Vert_{\mathcal{E}^0_{M}}=\Vert f\circ \pi\Vert_{\mathcal{E}_{G}}
\end{align}
for any $f\in C^{\infty}_c\left(M\right)$.

The first part of Theorem~\ref{thm.DirichletFormRelation} proves part (3) of Theorem~\ref{thm.MainTheorem}.

\begin{theorem} \label{thm.DirichletFormRelation}
The bilinear form $\mathcal{E}^0_{M}$ is closable on $L^2\left(M, d\nu_t^M\right)$ and its the closure denoted by $\mathcal{E}_{M}$ with its domain $\mathcal{D}\left(\mathcal{E}_{M}\right) \subseteq L^2\left(M, d\nu_t^M\right)$ is a Dirichlet form. Moreover, 
if $f\in \mathcal{D}\left(\mathcal{E}_{M}\right)$, then $f\circ \pi \in \mathcal{D}\left(\mathcal{E}_{G}\right)$ and 
\begin{align} \label{eqn.DirichletFormRelationII}
\mathcal{E}_{M}\left(f, h\right)=\mathcal{E}_{G}\left(f\circ\pi, h\circ\pi\right)
\end{align}
for any $f, h \in \mathcal{D}\left(\mathcal{E}_{M}\right)$.
\end{theorem}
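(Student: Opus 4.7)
The plan is to pass the two assertions of Theorem~\ref{thm.DirichletFormRelation} through the pullback map $f \mapsto f \circ \pi$, using the isometry \eqref{eqn.L^2NormRelation} and the intertwining \eqref{eqn.DirichletNormRelation} between the pre-Dirichlet form $\mathcal{E}^0_M$ on $C^\infty_c(M)$ and the already-established closed form $\mathcal{E}_G$ on $G$. In other words, everything we need is already true on $G$ (closability and closedness of $\mathcal{E}_G$), and the map $f \mapsto f \circ \pi$ is an isometric embedding of the relevant pre-Hilbert spaces, so we should be able to transport the structure.

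For closability of $\mathcal{E}^0_M$, I would take any sequence $\{f_k\} \subseteq C^\infty_c(M)$ with $f_k \to 0$ in $L^2(M, d\mu_t^M)$ and $\mathcal{E}^0_M(f_k - f_j) \to 0$ as $j, k \to \infty$. By \eqref{eqn.L^2NormRelation} and \eqref{eqn.DirichletNormRelation} together with Lemma~\ref{lem.DomainRelation}, the sequence $\{f_k \circ \pi\} \subseteq \mathcal{D}(\mathcal{E}_G)$ satisfies $f_k \circ \pi \to 0$ in $L^2(G, d\mu_t^G)$ and is Cauchy with respect to $\mathcal{E}_G$. Since $\mathcal{E}_G$ is closed, $\mathcal{E}_G(f_k \circ \pi) \to 0$, and hence $\mathcal{E}^0_M(f_k) = \mathcal{E}_G(f_k \circ \pi) \to 0$, which gives closability. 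The Markovian property of the closure then follows by the standard approximation using normal contractions on each $f_k$ (which commute with pullback by $\pi$), so the closure $\mathcal{E}_M$ is a Dirichlet form.

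For the intertwining \eqref{eqn.DirichletFormRelationII} on the closure, I would pick $f \in \mathcal{D}(\mathcal{E}_M)$ and use density of $C^\infty_c(M)$ in $\mathcal{D}(\mathcal{E}_M)$ (with respect to $\|\cdot\|_{\mathcal{E}_M}$) to choose an approximating sequence $\{f_k\} \subseteq C^\infty_c(M)$ with $f_k \to f$ in $L^2(M, d\mu_t^M)$ and $\mathcal{E}^0_M(f_k - f_j) \to 0$. Applying \eqref{eqn.L^2NormRelation} and \eqref{eqn.DirichletNormRelation} once more, the sequence $\{f_k \circ \pi\}$ converges to $f \circ \pi$ in $L^2(G, d\mu_t^G)$ and is Cauchy for $\mathcal{E}_G$, hence $f \circ \pi \in \mathcal{D}(\mathcal{E}_G)$ and $\mathcal{E}_G(f_k \circ \pi) \to \mathcal{E}_G(f \circ \pi)$ by closedness of $\mathcal{E}_G$. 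Passing to the limit in Lemma~\ref{lem.DomainRelation} yields
\[
\mathcal{E}_M(f) = \lim_{k \to \infty} \mathcal{E}^0_M(f_k) = \lim_{k \to \infty} \mathcal{E}_G(f_k \circ \pi) = \mathcal{E}_G(f \circ \pi),
\]
and polarization produces \eqref{eqn.DirichletFormRelationII} for all $f, h \in \mathcal{D}(\mathcal{E}_M)$.

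The main potential obstacle is subtle rather than technical: one has to be careful that the density of $C^\infty_c(M)$ in $\mathcal{D}(\mathcal{E}_M)$ is defined with respect to $\mathcal{E}_M$ before we know $\mathcal{E}_M$ exists, so the argument should be organized so that closability is established first (which only requires the pre-form $\mathcal{E}^0_M$ and the closedness of $\mathcal{E}_G$), and only afterwards is the density statement invoked to transfer the intertwining to $\mathcal{D}(\mathcal{E}_M)$. Aside from this bookkeeping, everything reduces to the isometric identities \eqref{eqn.L^2NormRelation} and \eqref{eqn.DirichletNormRelation} and the closedness of $\mathcal{E}_G$ already provided in Section~\ref{sec.DirichletFormonG}.
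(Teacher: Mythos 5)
Your proposal is correct and follows essentially the same route as the paper: closability is transported from the closedness of $\mathcal{E}_{G}$ via the isometries \eqref{eqn.L^2NormRelation} and \eqref{eqn.DirichletNormRelation}, and the identity \eqref{eqn.DirichletFormRelationII} is obtained by approximating $f\in\mathcal{D}\left(\mathcal{E}_{M}\right)$ by $C^{\infty}_c(M)$ functions, passing to the limit using closedness of $\mathcal{E}_{G}$, and polarizing. The only stylistic difference is that the paper verifies the Markovian property explicitly with the smooth truncations $\phi_{\varepsilon}$ from \cite[Exercise 1.2.1]{FukushimaOshimaTakedaBook2011} (which, as you note, commute with pullback by $\pi$), whereas you invoke the standard normal-contraction argument in one line; both are the same idea.
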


\begin{proof}
First, we show the closability of $\mathcal{E}^0_{M}$. This comes from the closedness of $\mathcal{E}_{G}$. Let $\{f_k\}_{k=1}^{\infty}$ be a sequence in $C^{\infty}_c\left(M\right)$ such that $\Vert f_k\Vert_{L^2\left(M, d\mu_t^M\right)} \xrightarrow[
k\to\infty]{} 0$ and $\mathcal{E}^0_{M}(f_k-f_l) \xrightarrow[k, l \to\infty]{}  0$. Then we have $f_k\circ \pi, f_l\circ \pi \in \mathcal{D}\left(\mathcal{E}_{G}\right)$ by Lemma~\ref{lem.DomainRelation} with
\begin{align*}
\Vert f_k\circ \pi\Vert_{L^2\left(G, d\mu_t^{G}\right)}=\Vert f_k\Vert_{L^2\left(M, d\mu_t^M\right)} \xrightarrow[k\to\infty]{} 0
\end{align*}
by \eqref{eqn.L^2NormRelation} and
\begin{align*}
\mathcal{E}_{G}(f_k\circ\pi-f_l\circ\pi)=\mathcal{E}^0_{M}(f_k-f_l) \xrightarrow[k, l \to\infty]{} 0
\end{align*}
by \eqref{eqn.DirichletFormRelation}. Using that $\mathcal{E}_{G}$ is closed, we see that
\begin{align*}
\mathcal{E}^0_{M}(f_k)=\mathcal{E}_{G}(f_k\circ\pi) \xrightarrow[k\to\infty]{} 0,
\end{align*}
and therefore $\mathcal{E}^0_{M}$ is closable on $L^2\left(M, d\mu_t^M\right)$.

Next, we show that the closure $\mathcal{E}_{M}$ of $\mathcal{E}^0_{M}$ is a Dirichlet form. It suffices to show that $\mathcal{E}^0_{M}$ is Markovian. Then, by \cite[Theorem 3.1.1]{FukushimaOshimaTakedaBook2011}, the closability of $\mathcal{E}^0_{M}$ implies that $\mathcal{E}_{M}$ is Markovian too. It remains to show that $\mathcal{E}^0_{M}$ satisfies the definition of being Markovian. For each $\varepsilon>0$, by  \cite[Exercise 1.2.1]{FukushimaOshimaTakedaBook2011} we can find an infinitely differentiable function $\phi_{\varepsilon}(t)$ such that $\phi_{\varepsilon}(t)=t$ for $t\in [0,1]$, $-\varepsilon\leqslant \phi_{\varepsilon}(t) \leqslant 1+\varepsilon$ for any $t\in \mathbb{R}$ and $0\leqslant \phi_{\varepsilon}(t^{\prime})-\phi_{\varepsilon}(t)\leqslant t^{\prime}-t$ whenever $t<t^{\prime}$. For any $f\in C^{\infty}_c(M)$, we have $\phi_{\varepsilon}(f)\in C^{\infty}_c(M)$ and $\phi_{\varepsilon}(f)\circ \pi=\phi_{\varepsilon}(f\circ \pi)\in \mathcal{D}\left(\mathcal{E}_{G}\right)$. Then, the Markovian property of $\mathcal{E}_{G}$ together with \eqref{eqn.DirichletFormRelation} implies that
\begin{align*}
\mathcal{E}^0_{M}\left(\phi_{\varepsilon}(f)\right) & =\mathcal{E}_{G}\left(\phi_{\varepsilon}(f\circ \pi)\right)
\\
&
\leqslant \mathcal{E}_{G}(f\circ \pi),
\end{align*}
so $\mathcal{E}^0_{M}$ is Markovian and thus $\mathcal{E}_{M}$ is Markovian. Thus $\mathcal{E}_{M}$ is a Dirichlet form.

Now we connect $\mathcal{D}\left(\mathcal{E}_{M}\right)$ and $\mathcal{D}\left(\mathcal{E}_{G}\right)$. For any $f\in \mathcal{D}\left(\mathcal{E}_{M}\right)$, there exists a sequence $\{f_k\}_{k=1}^{\infty}\subseteq C^{\infty}_c\left(M\right)$ such that $f_k\xrightarrow[k\to\infty]{} 
 f$ under $\Vert \cdot\Vert_{\mathcal{E}_{M}}$. Using \eqref{eqn.DirichletNormRelation} and the fact that $\mathcal{E}_{G}$ is closed, we obtain the convergence of the sequence $\{f_k\circ \pi\}_{k=1}^{\infty}$ in $\mathcal{D}\left(\mathcal{E}_{G}\right)$ under $\Vert \cdot\Vert_{\mathcal{E}_{G}}$. This means that there exists an $\tilde{f}\in \mathcal{D}\left(\mathcal{E}_{G}\right)$ such that $\Vert f_k\circ \pi\Vert_{\mathcal{E}_{G}} \xrightarrow[k\to\infty]{} \left\Vert \widetilde{f}\right\Vert_{\mathcal{E}_{G}}$. More precisely, we have
\begin{align*}
\Vert f_k\circ \pi\Vert_{L^2\left(G, d\mu_t^G\right)} \xrightarrow[k\to\infty]{} \left\Vert \widetilde{f}\right\Vert_{L^2\left(G, d\mu_t^G\right)}, \, \mathcal{E}_{G}(f_k\circ \pi) \xrightarrow[k\to\infty]{} \mathcal{E}_{G}\left(\tilde{f}\right).
\end{align*}
However, by \eqref{eqn.L^2NormRelation} we have 
\begin{align*}
\Vert f_k\circ \pi\Vert_{L^2\left(G, d\mu_t^G\right)} & =\Vert f_k\Vert_{L^2\left(M, d\mu_t^M\right)} 
\\
&
\xrightarrow[k\to\infty]{} \Vert f\Vert_{L^2\left(M, d\mu_t^M\right)}=\Vert f\circ \pi \Vert_{L^2\left(G, d\mu_t^G\right)}.
\end{align*}
By the uniqueness of the limit in $L^2\left(G, d\mu_t^G\right)$, we see that $f\circ \pi=\widetilde{f}$ in $L^2\left(G, d\mu_t^G\right)$. Since $\mathcal{D}\left(\mathcal{E}_{G}\right)$ is a subspace of $L^2\left(G, d\mu_t^G\right)$, we obtain that $f\circ \pi=\widetilde{f}$ in $\mathcal{D}\left(\mathcal{E}_{G}\right)$.

Finally, we prove \eqref{eqn.DirichletFormRelationII}. It suffices to prove it when $f=h \in \mathcal{D}\left(\mathcal{E}_{M}\right)$, from which we can obtain \eqref{eqn.DirichletFormRelationII} using polarization. Using the previous convergence of $f_k \xrightarrow[k\to\infty]{} f$ in $\mathcal{D}\left(\mathcal{E}_{M}\right)$ under $\Vert \cdot\Vert_{\mathcal{E}_{M}}$ and $f_k\circ \pi \xrightarrow[k\to\infty]{} f\circ \pi$ in $\mathcal{D}\left(\mathcal{E}_{G}\right)$ under $\Vert \cdot\Vert_{\mathcal{E}_{G}}$, we have
\begin{align*}
\mathcal{E}_{G}\left(f\circ\pi\right)=\mathcal{E}_{G}\left(\tilde{f}\right)=\lim_{k\to\infty}\mathcal{E}_{G}(f_k\circ \pi)=\lim_{k\to\infty}\mathcal{E}_{M}(f_k)=\mathcal{E}_{M}(f)
\end{align*}
implying the result.
\end{proof}

As a byproduct, we have 
\begin{align*} 
\Vert f\Vert_{\mathcal{E}_{M}}=\Vert f\circ \pi\Vert_{\mathcal{E}_{G}}
\end{align*}
for any $f\in \mathcal{D}\left(\mathcal{E}_{M}\right)$.

\subsection{Logarithmic Sobolev inequalities on homogeneous spaces} \label{sec.LSIonHomoegeneousSpaces}

In this section, we prove a logarithmic Sobolev inequality on the homogeneous space $M$.  Meanwhile, we discuss the connection between logarithmic Sobolev inequalities on $G$ and $M$. From the construction of $M$, we see that it suffices to study how the action of $H$ affects the logarithmic Sobolev constant when we want to track the dependence of the logarithmic Sobolev constant on the geometry of $M$. Theorem~\ref{thm.LSIonHomogeneousSpace} indeed shows that the logarithmic Sobolev constant can be chosen to be independent of $H$, the isotropy group of $M$. In particular, Theorem~\ref{thm.LSIonHomogeneousSpace} proves part (4) of Theorem~\ref{thm.MainTheorem}.

\begin{theorem} \label{thm.LSIonHomogeneousSpace}
Suppose that $G$ is equipped with a sub-Riemannian structure $\left(G,\mathcal{H}^{G},\langle\cdot,\cdot\rangle^{G}_{\mathcal{H}}\right)$ and the logarithmic Sobolev inequality \eqref{LSI} holds for  $f\in C^{\infty}_c(G)$ with the constant $C\left(G, \mathcal{H}^G, \mu_t^G \right)$. Then there is a natural sub-Riemannian structure $\left(M,\mathcal{H}^{M}, \langle\cdot, \cdot\rangle^{M}_{\mathcal{H}}\right)$ on $M$ induced by the transitive action by $G$ and $LSI_C\left(M,\mathcal{H}^{M}, \mathcal{D}\left(\mathcal{E}_{M}\right), \mu_t^{M}\right)$ holds. Moreover, the constant $C\left(M, \mathcal{H}^{M}, \mu_t^{M}\right)$ can be chosen to be
\begin{align*}
C\left(M,\mathcal{H}^{M},\mu_t^{M}\right)=C\left(G, \mathcal{H}^G,\mu^G_t\right).
\end{align*}
\end{theorem}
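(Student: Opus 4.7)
The plan is to push the logarithmic Sobolev inequality from $G$ down to $M$ through the quotient map $\pi: G \to M$, exploiting the three compatibility facts established earlier: the identification $\mu_t^M = \pi_\sharp \mu_t^G$ (Definition~\ref{df.HeatKernelMeasureonQuotientSpace}), the $L^2$-isometry $\|f\|_{L^2(M,d\mu_t^M)} = \|f\circ\pi\|_{L^2(G,d\mu_t^G)}$ from \eqref{eqn.L^2NormRelation}, and the Dirichlet-form identity $\mathcal{E}_M(f) = \mathcal{E}_G(f\circ\pi)$ from \eqref{eqn.DirichletFormRelationII}. Part~(1) of Theorem~\ref{thm.MainTheorem} is already established in Proposition~\ref{prop.SubRiemannianOnHomogeneousSpaces}, so only the inequality and the constant need to be addressed.

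By the version of Proposition~\ref{prop.LimitInLSI} applied to $M$, it is enough to prove \eqref{LSI} for test functions $f \in C_c^\infty(M)$, and then extend by density/closability to $\mathcal{D}(\mathcal{E}_M)$. Fix such an $f$. The change-of-variables formula for pushforward measures \cite[Theorem 1, Chapter V \S6]{BourbakiBookIntegrationI2004}, applied to the map $\pi$, rewrites both the entropy and the variance term on $M$ as the corresponding quantities for $f\circ\pi$ on $G$:
\begin{align*}
\int_M f^2 \log f^2 \, d\mu_t^M &= \int_G (f\circ\pi)^2 \log (f\circ\pi)^2 \, d\mu_t^G, \\
\int_M f^2 \, d\mu_t^M &= \int_G (f\circ\pi)^2 \, d\mu_t^G.
\end{align*}

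Next, I would apply the assumed LSI on $G$ to the function $f\circ\pi$. The subtle point, which I expect to be the main obstacle, is that even though $f \in C_c^\infty(M)$, the lift $f\circ\pi$ is generally \emph{not} compactly supported on $G$: its support is the $H$-saturation $\pi^{-1}(\operatorname{supp} f)$, which is a union of right $H$-cosets and typically not compact. Thus the hypothesis that LSI holds on $C_c^\infty(G)$ cannot be invoked directly. This is resolved by Proposition~\ref{prop.LimitInLSI} applied to $G$, which extends $LSI_C(G,\mathcal{H}^G,\mathcal{D}(\mathcal{E}_G),\mu_t^G)$ to the full Dirichlet form domain, combined with Lemma~\ref{lem.DomainRelation}, which guarantees $f\circ\pi \in \mathcal{D}(\mathcal{E}_G)$ whenever $f \in C_c^\infty(M)$. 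Consequently:
\begin{align*}
\int_G (f\circ\pi)^2 \log (f\circ\pi)^2 \, d\mu_t^G &- \Bigl(\int_G (f\circ\pi)^2 \, d\mu_t^G\Bigr) \log \Bigl(\int_G (f\circ\pi)^2 \, d\mu_t^G\Bigr) \\
&\leqslant C(G,\mathcal{H}^G,\mu_t^G)\, \mathcal{E}_G(f\circ\pi).
\end{align*}

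Finally, by \eqref{eqn.DirichletFormRelationII} (or directly from the pointwise identity \eqref{eqn.GradientRelation} on horizontal gradients combined with the change-of-variables formula for $\pi_\sharp\mu_t^G$), the right-hand side equals $C(G,\mathcal{H}^G,\mu_t^G)\,\mathcal{E}_M(f)$. Combining with the two identities at the top, this yields exactly \eqref{LSI} on $M$ with the constant $C(M,\mathcal{H}^M,\mu_t^M) = C(G,\mathcal{H}^G,\mu_t^G)$ for every $f \in C_c^\infty(M)$. A final appeal to Proposition~\ref{prop.LimitInLSI} promotes the inequality to all of $\mathcal{D}(\mathcal{E}_M)$, completing the proof. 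The only nonformal input is the density/closability extension on $G$; all remaining steps are a direct transfer through $\pi$.
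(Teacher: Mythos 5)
Your proposal is correct and follows essentially the same route as the paper: reduce to $f\in C^{\infty}_c(M)$ via Proposition~\ref{prop.LimitInLSI}, lift through $\pi$, use that $f\circ\pi\in\mathcal{D}\left(\mathcal{E}_{G}\right)$ together with the extension of the LSI on $G$ to all of $\mathcal{D}\left(\mathcal{E}_{G}\right)$ (precisely because $f\circ\pi$ need not be compactly supported), and transfer back via the change of variables and \eqref{eqn.DirichletFormRelationII}. The paper's proof handles the non-compact-support issue in exactly the way you anticipate, citing Theorem~\ref{thm.DirichletFormRelation} where you cite Lemma~\ref{lem.DomainRelation}, which is an immaterial difference.
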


\begin{proof}
By Proposition~\ref{prop.SubRiemannianOnHomogeneousSpaces}, we see that there is a natural sub-Riemannian structure $\left(M,\mathcal{H}^{M}, \langle\cdot, \cdot\rangle^{M}_{\mathcal{H}}\right)$ on $M$ induced by the transitive action by $G$.

Now we prove a logarithmic Sobolev inequality on $M$. By Proposition~\ref{prop.LimitInLSI} and since $\mathcal{E}_{M}$ is closed, it suffices to prove that \eqref{LSI} holds for $f\in C^{\infty}_c(M) \subseteq \mathcal{D}\left(\mathcal{E}_{M}\right)$. Recall that then $f\circ \pi \in \mathcal{D}\left(\mathcal{E}_{G}\right)$ by Theorem~\ref{thm.DirichletFormRelation}. By Proposition~\ref{prop.LimitInLSI} and since $\mathcal{E}_{G}$ is closed, we have that $LSI_C(G,\mathcal{H}^G, \mathcal{D}\left(\mathcal{E}_{G}\right), \mu_t^G)$ holds on $G$. Using the change of variable formula for the quotient map $\pi(g)\longmapsto m\in M$ in the following logarithmic Sobolev inequality for $f\circ \pi$
\begin{align*} 
& \int_{G}(f\circ \pi)^2\log (f\circ \pi)^2d\mu_t^{G}-\left(\int_{G}(f\circ \pi)^2 d\mu_t^{G}\right)\log\left(\int_{G}(f\circ \pi)^2d\mu_t^{G}\right) 
\\
&
\leqslant C\left(G,\mathcal{H}^G, \mu_t^G\right)\mathcal{E}_{G}(f\circ \pi), 
\end{align*}
we obtain that 
\begin{align} \label{ineq.LSIonHomogeneousSpace}
\int_{M}f^2\log f^2d\mu_t^{M}-\left(\int_{M}f^2 d\mu_t^{M}\right)\log\left(\int_{M}f^2d\mu_t^{M}\right) \leqslant C\left(G, \mathcal{H}^G,\mu_t^G\right) \mathcal{E}_{M}(f) 
\end{align}
by \eqref{eqn.DirichletFormRelationII}. Thus $LSI_C\left(M,\mathcal{H}^{M},\mathcal{D}\left(\mathcal{E}_{M}\right),\mu_t^{M}\right)$ holds. Moreover, we can choose
\begin{align*}
C\left(M, \mathcal{H}^{M},\mu_t^{M}\right)=C\left(G, \mathcal{H}^G,\mu^G_t\right)
\end{align*}
as we can see in \eqref{ineq.LSIonHomogeneousSpace}.
\end{proof}

\begin{remark}
Theorem~\ref{thm.LSIonHomogeneousSpace} tells us that if two homogeneous spaces have a transitive action by the same Lie group $G$, then they satisfy a logarithmic Sobolev inequality with the same constant. This means that the logarithmic Sobolev constant can be chosen independent of the isotropy group of the homogeneous space, though the constant might or might not be optimal for different isotropy groups. 
In addition, it is not clear whether the quotient map $\pi$ preserves the optimality of the logarithmic Sobolev constant or not. 
\end{remark}

\begin{remark}
If $G$ is a Riemannian manifold, then $M$ is a Riemannian manifold, so Theorem~\ref{thm.LSIonHomogeneousSpace} recovers \cite[Corollary 4.5]{Gross1992}.
\end{remark}

\section{Heisenberg group and step-two homogeneous spaces} \label{sec.Examples}

The first type of examples are built on the three-dimensional isotropic Heisenberg group, which is a model space in sub-Riemannian geometry. The transitive action on such homogeneous spaces is given by a three-dimensional isotropic Heisenberg group or their product groups. We first recall some basics about it.

The three-dimensional isotropic Heisenberg group $\mathbb{H}^1_{\omega_0}$ is the set $\mathbb{R}^{2}\times\mathbb{R}$ equipped with the group law given by
\begin{align}\label{GroupLaw.n=1}
& \left( x, y, z \right)\star \left( x^{\prime}, y^{\prime}, z^{\prime} \right)=\left( x+x^{\prime}, y+y^{\prime}, z+z^{\prime} + \frac{1}{2}\left( xy^{\prime}- x^{\prime}y\right)\right)
\end{align}
for any $\left( x, y, z \right), \left( x^{\prime}, y^{\prime}, z^{\prime} \right) \in \mathbb{H}^1_{\omega_0}$.
The Lie algebra $\mathfrak{h}_{\omega_0}$ can be identified with the linear space spanned by the collection of the following left-invariant vector fields
\begin{align}\label{e.CanonicalBasis}
& \widetilde{X}\left( g \right)=\frac{\partial}{\partial x}-\frac{1}{2}y\frac{\partial}{\partial z}, \notag
\\
&
\widetilde{Y}\left( g \right)=\frac{1}{\partial y}+\frac{1}{2}x\frac{\partial}{\partial z}, \notag
\\
&
\widetilde{Z}\left( g \right)=\frac{\partial}{\partial z} \notag
\end{align}
for any $g=(x,y,z)\in\mathbb{H}^{1}_{\omega_0}$.
The isotropic Heisenberg group $\mathbb{H}^1_{\omega_{0}}$ has a natural sub-Riemannian structure $\left(\mathbb{H}^1_{\omega_{0}},\mathcal{H}^{\omega_0},\langle\cdot,\cdot\rangle^{\omega_0}_{\mathcal{H}}\right)$ where the \emph{horizontal distribution} is
\[
\mathcal{H}^{\omega_0}=\mathcal{H}_{g}^{\omega_0}= \operatorname{Span}\{\widetilde{X} \left( g \right), \widetilde{Y}\left( g \right)\}
\]
and the left-invariant inner product $\langle\cdot,\cdot\rangle_{\mathcal{H}^{\omega_0}}$ is chosen in such a way that  $\{\widetilde{X}, \widetilde{Y}\}$ is an orthonormal frame for the sub-bundle $\mathcal{H}^{\omega_0}$. The bi-invariant Haar measure $dg$ on $\mathbb{H}^1_{\omega_0}$ is the Lebesgue measure $dxdydz$ and we choose it as our reference measure on $\mathbb{H}^1_{\omega_0}$. 

Let $\mu_t^{\omega_0}$ be the hypoelliptic heat kernel measure on $\mathbb{H}^1_{\omega_{0}}$ associated to the sub-Laplacian $\Delta_{\mathcal{H}}^{\omega_0}$. The logarithmic Sobolev inequality respect to the heat kernel measure is known to hold in this case.

\begin{theorem}[Corollaire 1.2 in \cite{LiHong-Quan2006}]\label{t.HQLi} On $\mathbb{H}^1_{\omega_0}$, the logarithmic Sobolev inequality \eqref{LSI} with respect to the heat kernel measure $\mu_t^{\omega_0}$ holds for $f\in C^{\infty}_c(\mathbb{H}^1_{\omega_0})$ with a constant $C(\omega_0,t)$. 
\end{theorem}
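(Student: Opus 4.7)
The plan is to follow H.-Q.~Li's approach, which combines a pointwise gradient commutation estimate for the hypoelliptic heat semigroup $P_t^{\omega_0}$ with a semigroup interpolation (entropy dissipation) argument going back to Bakry--Émery. The delicate piece specific to $\mathbb{H}^1_{\omega_0}$ is the gradient estimate; once that is in hand, the derivation of \eqref{LSI} is largely formal.

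First I would establish the key commutation inequality for some constant $K = K(\omega_0) > 0$ independent of $t$:
\begin{align*}
\left\vert \nabla_{\mathcal{H}}^{\omega_0} P_t^{\omega_0} f \right\vert_{\mathcal{H}}(g) \leqslant K\, P_t^{\omega_0}\!\left(\left\vert \nabla_{\mathcal{H}}^{\omega_0} f \right\vert_{\mathcal{H}}\right)(g)
\end{align*}
for all $f \in C^{\infty}_c(\mathbb{H}^1_{\omega_0})$ and all $g \in \mathbb{H}^1_{\omega_0}$. This is the content of H.-Q.~Li's theorem and is obtained by differentiating the explicit Gaveau--Hulanicki heat kernel on $\mathbb{H}^1_{\omega_0}$, writing $\widetilde{X} p_t^{\omega_0}$ and $\widetilde{Y} p_t^{\omega_0}$ as oscillatory integrals over the dual variable, and then using careful stationary-phase / Laplace-type asymptotics to compare $|\nabla_{\mathcal{H}} p_t^{\omega_0}|$ with $p_t^{\omega_0}$ itself along the Carnot--Carathéodory ball. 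This step is where all the work lies, and it is the main obstacle: unlike the elliptic case, there is no $\Gamma_2$-curvature-dimension inequality available to force the commutation directly, so one must exploit the explicit heat kernel of $\mathbb{H}^1_{\omega_0}$.

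Given the gradient estimate, I would run the standard entropy-dissipation interpolation. Fix $f \in C^{\infty}_c(\mathbb{H}^1_{\omega_0})$ with $f > 0$ (the general case follows by a standard regularization), and set
\begin{align*}
\Lambda(s) := P_s^{\omega_0}\!\left( \left(P_{t-s}^{\omega_0} f^2\right) \log \left(P_{t-s}^{\omega_0} f^2\right) \right)(e), \qquad 0 \leqslant s \leqslant t.
\end{align*}
Then $\Lambda(0) = \bigl(\int f^2 d\mu_t^{\omega_0}\bigr) \log \bigl(\int f^2 d\mu_t^{\omega_0}\bigr)$ and $\Lambda(t) = \int f^2 \log f^2\, d\mu_t^{\omega_0}$, so the left-hand side of \eqref{LSI} is $\Lambda(t) - \Lambda(0) = \int_0^t \Lambda'(s)\, ds$. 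Writing $u_s := P_{t-s}^{\omega_0} f^2$ and using $\Delta_{\mathcal{H}}^{\omega_0}(\varphi(u)) = \varphi'(u)\Delta_{\mathcal{H}}^{\omega_0} u + \varphi''(u) |\nabla_{\mathcal{H}}^{\omega_0} u|_{\mathcal{H}}^2$ with $\varphi(x) = x\log x$, a direct computation gives
\begin{align*}
\Lambda'(s) = P_s^{\omega_0}\!\left( \frac{\left\vert \nabla_{\mathcal{H}}^{\omega_0} u_s \right\vert_{\mathcal{H}}^2}{u_s} \right)(e).
\end{align*}

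The final step is to bound the integrand using the gradient estimate and Cauchy--Schwarz. Applying the commutation inequality to $f^2$ and using $|\nabla_{\mathcal{H}}^{\omega_0} f^2|_{\mathcal{H}} = 2|f||\nabla_{\mathcal{H}}^{\omega_0} f|_{\mathcal{H}}$, together with the pointwise bound $P_{t-s}^{\omega_0}(|f||\nabla_{\mathcal{H}}^{\omega_0} f|_{\mathcal{H}})^2 \leqslant P_{t-s}^{\omega_0}(f^2)\, P_{t-s}^{\omega_0}(|\nabla_{\mathcal{H}}^{\omega_0} f|_{\mathcal{H}}^2)$, one obtains
\begin{align*}
\frac{\left\vert \nabla_{\mathcal{H}}^{\omega_0} u_s \right\vert_{\mathcal{H}}^2}{u_s}(g) \leqslant 4 K^2\, P_{t-s}^{\omega_0}\!\left(\left\vert \nabla_{\mathcal{H}}^{\omega_0} f \right\vert_{\mathcal{H}}^2\right)(g).
\end{align*}
Applying $P_s^{\omega_0}$ and using the semigroup property $P_s^{\omega_0} P_{t-s}^{\omega_0} = P_t^{\omega_0}$, I get $\Lambda'(s) \leqslant 4K^2 \int |\nabla_{\mathcal{H}}^{\omega_0} f|_{\mathcal{H}}^2 \, d\mu_t^{\omega_0}$, and integrating over $s \in [0,t]$ yields \eqref{LSI} with constant $C(\omega_0, t) = 4K^2 t$. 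The extension from strictly positive $f \in C^{\infty}_c$ to all $f \in C^{\infty}_c(\mathbb{H}^1_{\omega_0})$ is handled by replacing $f^2$ with $f^2 + \varepsilon$ and sending $\varepsilon \to 0^+$.
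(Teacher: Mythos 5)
This statement is not proved in the paper at all: it is imported verbatim as Corollaire 1.2 of H.-Q.~Li's paper \cite{LiHong-Quan2006}, so there is no internal proof to compare against. Your outline does reproduce the strategy of the cited source: first the $L^1$ gradient commutation estimate $\vert \nabla_{\mathcal{H}}^{\omega_0} P_t^{\omega_0} f\vert_{\mathcal{H}} \leqslant K\, P_t^{\omega_0}\vert \nabla_{\mathcal{H}}^{\omega_0} f\vert_{\mathcal{H}}$ with $K$ independent of $t$ (Li's Th\'eor\`eme 1.1), then the Bakry--\'Emery/Gross interpolation along $s \mapsto P_s^{\omega_0}\bigl(u_s \log u_s\bigr)$ with $u_s = P_{t-s}^{\omega_0} f^2$, Cauchy--Schwarz, and the semigroup property. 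That second half of your argument is correct and yields $C(\omega_0,t) = 4K^2 t$ (or $2K^2 t$, depending on whether the generator is normalized as $\Delta_{\mathcal{H}}^{\omega_0}$ or $\tfrac12\Delta_{\mathcal{H}}^{\omega_0}$; the paper uses the latter convention elsewhere, so you should fix one normalization and track the factor of $2$). This is consistent with the paper's remark that $C(\omega_0,t) = C^2 t$ where $C$ is the constant in the Driver--Melcher/Li inequality.

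The genuine gap is that the entire difficulty of the theorem lives in the commutation estimate, which you state but do not prove. On $\mathbb{H}^1_{\omega_0}$ there is no curvature-dimension inequality $\Gamma_2 \geqslant \rho\, \Gamma$ that would force it, so it cannot be obtained by the soft semigroup arguments that suffice in the Riemannian case; your one-sentence description (differentiate the Gaveau--Hulanicki kernel, oscillatory integrals, stationary phase) is a pointer to Li's proof rather than a proof. Two further points you leave implicit: (i) the $t$-independence of $K$ is not automatic from proving the estimate at a single time --- it follows from the parabolic dilation invariance of $\mathbb{H}^1_{\omega_0}$, which is also the reason the final constant is exactly linear in $t$; (ii) the weaker $L^2$ version $\vert\nabla P_t f\vert^2 \leqslant K^2 P_t(\vert\nabla f\vert^2)$, which is much easier to prove (Driver--Melcher), is \emph{not} sufficient for your interpolation, since the Cauchy--Schwarz step genuinely requires the $p=1$ bound applied to $f^2$. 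As a record of how the cited result is proved, your proposal is accurate; as a self-contained proof it is incomplete at its central step.
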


\begin{remark}
In addition to the statement above H.-Q.~Li proved that

\[
C\left(\omega_{0}, t\right)=C\left(\omega_0\right)t=C^{2}t,
\]
where $C$ is the constant in the Driver-Melcher inequality \cite[Th\'{e}or\`{e}me 1.1]{LiHong-Quan2006} proved originally in \cite{DriverMelcher2005} for $p>1$. 
\end{remark}

For $n\geqslant 1$, let $G$ be the product group $\mathbb{H}^1_{\omega_{0}} \times \cdots \times \mathbb{H}^1_{\omega_{0}}$ of $n$ copies of $\mathbb{H}^1_{\omega_{0}}$. The product group is equipped with a product sub-Riemannian structure $\left(\mathbb{H}^1_{\omega_{0}} \times \cdots \times \mathbb{H}^1_{\omega_{0}}, \mathcal{H}, \langle \cdot, \cdot \rangle_{\mathcal{H}}\right)$ as in  \cite[Section 4.1]{GordinaLuo2022}. We know that $G$ is unimodular. The bi-invariant Haar measure $\mu^G=\mu^G_R=\mu^G_L$ on $\mathbb{H}^1_{\omega_{0}} \times \cdots \times \mathbb{H}^1_{\omega_{0}}$ is the Lebesgue measure $dx_1dy_1dz_1\cdots dx_ndy_ndz_n$ and we choose it as our reference measure on $\mathbb{H}^1_{\omega_{0}} \times \cdots \times \mathbb{H}^1_{\omega_{0}}$. 

The hypoelliptic heat kernel measure $\mu_t^{G}$ associated to the sub-Laplacian $\Delta_{\mathcal{H}}$ on the product group is the product measure $\mu_t^{\omega_{0}} \otimes\cdots\otimes \mu_t^{\omega_{0}}$ (see \cite[Section 4.1]{GordinaLuo2022} for details). The logarithmic Sobolev inequality with respect to the heat kernel measure can be obtained via a tensorization argument.

\begin{proposition} [Proposition 4.1 in \cite{GordinaLuo2022}] \label{prop.LSITensorization} 
On the product group $\mathbb{H}^1_{\omega_0}\times \cdots\times \mathbb{H}^1_{\omega_0}$, the logarithmic Sobolev inequality \eqref{LSI} with respect to the heat kernel measure $\mu_t^{G}$ holds with a constant $C\left(n,t\right)$, where the constant can be chosen to be $C\left(n,t\right)=C\left(\omega_0\right)t$ which is independent of $n$.
\end{proposition}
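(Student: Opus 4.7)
The plan is to prove the claim by induction on $n$, using the classical tensorization property of the logarithmic Sobolev inequality. The base case $n=1$ is exactly Theorem~\ref{t.HQLi}, which yields the constant $C(\omega_0)t$. For the inductive step, write $G = G_{n-1} \times \mathbb{H}^1_{\omega_0}$ with $G_{n-1} := (\mathbb{H}^1_{\omega_0})^{n-1}$. Two structural facts about the product make tensorization clean: the horizontal gradient splits as
\begin{align*}
\left|\nabla_{\mathcal{H}} f\right|_{\mathcal{H}}^2 = \left|\nabla_{\mathcal{H}_{G_{n-1}}} f\right|^2 + \left|\nabla_{\mathcal{H}_{\omega_0}} f\right|^2
\end{align*}
(each factor's left-invariant horizontal vector fields act only on its own coordinates), and the heat kernel measure factors as $\mu_t^G = \mu_t^{G_{n-1}} \otimes \mu_t^{\omega_0}$, as already noted in the excerpt.

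The core computation uses the sub-additivity of entropy for product measures. For $f \in C^\infty_c(G)$ with $f \geqslant 0$, define
\begin{align*}
F(g')^2 := \int_{\mathbb{H}^1_{\omega_0}} f(g',h)^2 \, d\mu_t^{\omega_0}(h).
\end{align*}
Then the entropy of $f^2$ under $\mu_t^G$ is bounded by the sum of (i) the $\mu_t^{G_{n-1}}$-integral of the slicewise entropies of $f(g',\cdot)^2$ under $\mu_t^{\omega_0}$, and (ii) the entropy of $F^2$ under $\mu_t^{G_{n-1}}$. Applying Theorem~\ref{t.HQLi} slicewise to (i) bounds it by $C(\omega_0)t \int_G |\nabla_{\mathcal{H}_{\omega_0}} f|^2 \, d\mu_t^G$. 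Applying the inductive hypothesis to (ii) bounds it by $C(\omega_0)t \int_{G_{n-1}} |\nabla_{\mathcal{H}_{G_{n-1}}} F|^2 \, d\mu_t^{G_{n-1}}$, which is further controlled by $C(\omega_0)t \int_G |\nabla_{\mathcal{H}_{G_{n-1}}} f|^2 \, d\mu_t^G$ via the pointwise Cauchy-Schwarz bound
\begin{align*}
\left|\nabla_{\mathcal{H}_{G_{n-1}}} F(g')\right|^2 \leqslant \int_{\mathbb{H}^1_{\omega_0}} \left|\nabla_{\mathcal{H}_{G_{n-1}}} f(g',h)\right|^2 \, d\mu_t^{\omega_0}(h),
\end{align*}
obtained by differentiating under the integral (the horizontal vector fields on $G_{n-1}$ commute with integration in the $\mathbb{H}^1_{\omega_0}$-variable). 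Summing the two bounds gives LSI on $G$ with constant $C(\omega_0)t$, which is independent of $n$ as claimed.

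The main obstacle I anticipate is bookkeeping of function classes and domains within the Dirichlet-form framework rather than a conceptual difficulty: one must check that $F$ lies in $\mathcal{D}(\mathcal{E}_{\mu_t^{G_{n-1}}})$, that $f(g',\cdot)$ is admissible for Theorem~\ref{t.HQLi} for $\mu_t^{G_{n-1}}$-almost every $g'$, and that the dominated-convergence and differentiation-under-the-integral steps are justified. Since $f \in C^\infty_c(G)$ and the hypoelliptic heat kernel density is smooth and strictly positive, these verifications are routine, and the extension from $C^\infty_c(G)$ to the whole domain $\mathcal{D}(\mathcal{E}_{\mu_t^G})$ is supplied by Proposition~\ref{prop.LimitInLSI}.
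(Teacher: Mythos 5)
Your argument is the standard tensorization of the logarithmic Sobolev inequality via sub-additivity of entropy over product measures, which is exactly the route the paper takes (it cites the result from \cite{GordinaLuo2022} and explicitly notes it is "obtained via a tensorization argument"). The induction, the slicewise application of Theorem~\ref{t.HQLi}, and the Cauchy--Schwarz bound on $\left\vert\nabla_{\mathcal{H}_{G_{n-1}}}F\right\vert$ are all correct, and the domain bookkeeping you flag is indeed the only remaining (routine) verification.
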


The homogeneous space characterization theorem, Theorem~\ref{t.HomSpaceCharacterization}, tells us that the homogeneous space is determined by a closed subgroup $H$ of $G$ as the isotropy subgroup. First we do not specify such a subgroup $H$ and present our result on hypoelliptic logarithmic Sobolev inequalities before proceeding to different examples of $H$.

Let $H$ be a closed subgroup of $\mathbb{H}^1_{\omega_1}\times \cdots\times \mathbb{H}^1_{\omega_n}$. By \cite[Proposition 10 in Chapter VII \S2]{BourbakiBookIntegrationII2004}, $H$ is unimodular too. We choose the bi-invariant Haar measure $\mu^H=\mu^H_R=\mu^H_L$ as a reference measure on $H$.

From the construction of the homogeneous space, we see that $M=H\backslash G$ is a step-two homogeneous space with a natural sub-Riemannian structure as described in Section~\ref{sec.SubRiemannianStructureonHomogoneousSpaces}. Furthermore, if $H$ is a normal subgroup of $\mathbb{H}^1_{\omega_1}\times \cdots\times \mathbb{H}^1_{\omega_n}$, then $M$ is indeed a step-two Carnot group (see \cite[Section 2.2]{BonfiglioliLanconelliUguzzoniBook} for precise definition). 

We give a characterization of closed subgroups of any step-two Carnot group as below. This result applies to $\mathbb{H}^1_{\omega_{0}} \times \cdots \times \mathbb{H}^1_{\omega_{0}}$ too.

\begin{lemma} \label{eqn.ClosedLieSubgroup}
Let $G$ be a step-two Carnot group with the Lie algebra $\mathfrak{g}=V_1\oplus V_2$ such that $[V_1,V_1]=V_2$. Let $\mathfrak{h}=\widetilde{V_1}\oplus \widetilde{V_2}$ where $\widetilde{V_1}$ is a subspace of $V_1$ and $\widetilde{V_2}$ is a subspace of $V_2$ such that $[\widetilde{V_1},\widetilde{V_1}] \subseteq \widetilde{V_2}$. Then $\mathfrak{h}$ is a Lie subalgebra of $\mathfrak{g}$ and $H=\operatorname{exp}\left(\mathfrak{h}\right)$ is a closed subgroup of $G$. Conversely, every closed subgroup of $G$ has such an explicit form.
\end{lemma}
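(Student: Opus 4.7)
The plan is to prove the two implications separately.

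For the forward direction, I would first verify that $\mathfrak{h}$ is a Lie subalgebra by direct bracket computation. For any $X = X_1 + X_2$, $Y = Y_1 + Y_2 \in \mathfrak{h}$ with $X_i, Y_i \in \widetilde{V_i}$, step-two nilpotency $[V_1, V_2] = [V_2, V_2] = 0$ collapses the bracket to $[X, Y] = [X_1, Y_1]$, which lies in $[\widetilde{V_1}, \widetilde{V_1}] \subseteq \widetilde{V_2} \subseteq \mathfrak{h}$ by hypothesis. Since $G$ is a simply connected nilpotent Lie group of step two, $\exp: \mathfrak{g} \to G$ is a global diffeomorphism and the Baker--Campbell--Hausdorff formula truncates to
\[
\exp(U)\exp(V) = \exp\!\left(U + V + \tfrac{1}{2}[U, V]\right), \qquad U, V \in \mathfrak{g}.
\]
Taking $U, V \in \mathfrak{h}$, the argument on the right lies in $\mathfrak{h}$, so $H = \exp(\mathfrak{h})$ is closed under multiplication, and $\exp(U)^{-1} = \exp(-U)$ handles inverses. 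Finally, $\mathfrak{h}$ is a closed linear subspace of $\mathfrak{g}$ and $\exp$ is a homeomorphism, so $H$ is closed in $G$.

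For the converse, let $H$ be a (connected) closed subgroup of $G$. The closed subgroup theorem yields that $H$ is an embedded Lie subgroup whose Lie algebra $\mathfrak{h} := \operatorname{Lie}(H)$ is a Lie subalgebra of $\mathfrak{g}$, and since $\exp$ is a diffeomorphism we have $H = \exp(\mathfrak{h})$. The key task is then to exhibit a graded decomposition of $\mathfrak{h}$. The natural choice is
\[
\widetilde{V_1} := \mathfrak{h} \cap V_1, \qquad \widetilde{V_2} := \mathfrak{h} \cap V_2,
\]
after which the compatibility $[\widetilde{V_1}, \widetilde{V_1}] \subseteq \widetilde{V_2}$ is automatic from $[V_1, V_1] \subseteq V_2$ and $\widetilde{V_1} \subseteq \mathfrak{h}$.

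The main obstacle I anticipate is precisely the gradedness step $\mathfrak{h} = (\mathfrak{h} \cap V_1) \oplus (\mathfrak{h} \cap V_2)$, which is not automatic for a general Lie subalgebra of a step-two nilpotent Lie algebra (as shown, for example, by the one-dimensional subalgebra spanned by $X + Z$ in the three-dimensional Heisenberg algebra, which is not graded). The cleanest way I see to force the splitting is to exploit invariance under the Carnot dilations $\delta_r$: if $X_1 + X_2 \in \mathfrak{h}$, then $(\delta_r)_*(X_1 + X_2) = r X_1 + r^2 X_2 \in \mathfrak{h}$ for every $r > 0$, and comparing powers of $r$ forces $X_1, X_2 \in \mathfrak{h}$ separately. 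I therefore expect the converse to implicitly restrict to closed subgroups compatible with the Carnot stratification, which is the natural framework for the homogeneous-space examples pursued in the subsequent sections.
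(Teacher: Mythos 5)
Your forward direction is correct and in fact more careful than the paper's own argument: the paper merely notes that $\mathfrak{h}$ is a subalgebra ``from the construction'' and that the inclusion $H\hookrightarrow G$ is an embedding, whereas your explicit bracket computation using step-two nilpotency, the truncated Baker--Campbell--Hausdorff formula, and the fact that $\exp$ is a global homeomorphism onto $G$ make both the subgroup property and the closedness of $H=\exp(\mathfrak{h})$ transparent.

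For the converse you have put your finger on exactly the weak point, and it is a weak point of the paper's proof rather than of your attempt alone. The paper argues: $H$ is an embedded Lie subgroup by the closed subgroup theorem, its Lie algebra $\mathfrak{h}$ is a subalgebra of $\mathfrak{g}$, and ``this enables us to write $\mathfrak{h}=\widetilde{V_1}\oplus\widetilde{V_2}$ with $\widetilde{V_i}\subseteq V_i$'' --- precisely the graded-splitting claim you observe is not automatic. Your counterexample is decisive: in $\mathbb{H}^1_{\omega_0}$ the set $\exp\left(\mathbb{R}(X+Z)\right)$ is a closed connected one-parameter subgroup whose Lie algebra $\mathbb{R}(X+Z)$ is not of the form $\widetilde{V_1}\oplus\widetilde{V_2}$ with $\widetilde{V_i}\subseteq V_i$. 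So the converse as stated is false, and no argument can close this gap without an extra hypothesis such as the dilation-invariance you propose (equivalently, that $\mathfrak{h}$ is a homogeneous subalgebra with respect to the stratification); your observation that $(\delta_r)_*\mathfrak{h}\subseteq\mathfrak{h}$ for all $r>0$ forces the splitting is the correct repair. There is a second, independent failure that you only partially address by parenthetically assuming connectedness: closed subgroups need not be connected, and for a discrete subgroup $H$ --- for instance the lattices the paper itself uses in Section 5 to build compact Heisenberg manifolds --- one has $\operatorname{Lie}(H)=\{0\}$, so $\exp\left(\operatorname{Lie}(H)\right)=\{e\}\neq H$ and the conclusion $H=\exp(\mathfrak{h})$ fails outright. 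In short, your proof of the first implication is complete, and for the second you have correctly diagnosed that the statement must be weakened (to connected, dilation-invariant closed subgroups) rather than that your argument is missing a recoverable step.
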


\begin{proof}
On the one hand, from the construction of $\mathfrak{h}$, we see that $\mathfrak{h}$ is a Lie subalgebra of $\mathfrak{g}$. In addition, the inclusion map $\iota:H \hookrightarrow G$ is an embedding. By \cite[Theorem 7.21]{LeeBook2003SmoothManifold}, $H$ is a closed subgroup.

On the other hand, for any closed subgroup $H$ of $G$, by \cite[Theorem 20.12]{LeeBook2003SmoothManifold}, $H$ is  an embedded
Lie subgroup. Let $\mathfrak{h}$ be the Lie algebra of $H$. Then $\mathfrak{h}$ is a Lie subalgebra of $\mathfrak{g}$ up to some identification (see \cite[p.47]{KnappBook1996}). This enables us to write $\mathfrak{h}$ as $\mathfrak{h}=\widetilde{V_1}\oplus \widetilde{V_2}$ where $\widetilde{V_i}$ is a subspace of $V_i$ for $i=1,2$. The fact that $[\mathfrak{h},\mathfrak{h}] \subseteq \mathfrak{h}$ implies  that $[\widetilde{V_1},\widetilde{V_1}] \subseteq \widetilde{V_2}$. Using the fact that the exponential map of a Carnot group is a global diffeomorphism (see \cite[Theorem 1.3.28]{BonfiglioliLanconelliUguzzoniBook}), we have $H=\operatorname{exp}\left(\mathfrak{h}\right)$ by \cite[Proposition 6.3]{SagleWaldeBook1973}.
\end{proof}

Theorem~\ref{thm.LSIonHomogeneousSpace} allows us to study the dependence of the hypoelliptic logarithmic Sobolev constant of $M$ on its underlying geometry and the dimension as follows.

\begin{theorem} \label{thm.LSIonHomogeneousSpaceII}
There is a natural sub-Riemannian structure $\left(M, \mathcal{H}^{M}, \langle\cdot, \cdot\rangle^{M}_{\mathcal{H}}\right)$ on $M$ induced by the transitive action by $\mathbb{H}^1_{\omega_0}\times \cdots\times \mathbb{H}^1_{\omega_0}$, and the logarithmic Sobolev inequality $LSI_C\left(M,\mathcal{H}^{M}, \mathcal{D}\left(\mathcal{E}_{M}\right), \mu_t^{M}\right)$ holds with the constant $C\left(M,\mathcal{H}^{M},\mu_t^{M}\right)$. Moreover, we can choose
\begin{align*}
C\left(M,\mathcal{H}^{M},\mu_t^{M}\right)=C(\omega_0,t)=C\left(\omega_0\right)t,
\end{align*}
which is the same as the constant for the isotropic Heisenberg group $\mathbb{H}^1_{\omega_{0}}$. Thus the logarithmic Sobolev constant $C\left(M,\mathcal{H}^{M},\mu_t^{M}\right)$ on the $M$ is independent of its dimension.
\end{theorem}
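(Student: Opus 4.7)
The plan is straightforward: chain together two results already established in the paper, namely the tensorization of the logarithmic Sobolev inequality on the product Heisenberg group (Proposition~\ref{prop.LSITensorization}) and the transfer of the logarithmic Sobolev inequality from a transitively acting Lie group to its homogeneous quotient (Theorem~\ref{thm.LSIonHomogeneousSpace}). Since the hypothesis of Theorem~\ref{thm.LSIonHomogeneousSpaceII} fixes the acting group to be $G=\mathbb{H}^1_{\omega_0}\times\cdots\times\mathbb{H}^1_{\omega_0}$ with the product sub-Riemannian structure, the dimension-independence will drop out for free once both results are combined.

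More precisely, I would first invoke Theorem~\ref{t.HQLi} of H.-Q.~Li for a single factor, which gives $LSI_{C(\omega_0,t)}(\mathbb{H}^1_{\omega_0},\mathcal{H}^{\omega_0},\mu_t^{\omega_0})$ with constant $C(\omega_0,t)=C(\omega_0)t$. Then Proposition~\ref{prop.LSITensorization}, which is a tensorization of this one-factor LSI for the product heat kernel measure $\mu_t^G=\mu_t^{\omega_0}\otimes\cdots\otimes\mu_t^{\omega_0}$, yields
\begin{equation*}
C\left(G,\mathcal{H}^G,\mu_t^G\right)=C(\omega_0)t,
\end{equation*}
which crucially does not depend on $n$. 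This is the content of the tensorization property of the logarithmic Sobolev inequality, and no new work is needed here.

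Next, with the LSI on $G$ in hand, I would apply Theorem~\ref{thm.LSIonHomogeneousSpace} directly to the pair $(G,H)$. That theorem first produces, via Proposition~\ref{prop.SubRiemannianOnHomogeneousSpaces}, the natural sub-Riemannian structure $(M,\mathcal{H}^M,\langle\cdot,\cdot\rangle_{\mathcal{H}}^M)$ induced by the transitive action of $G$ on $M=H\backslash G$; note that the existence of closed subgroups $H$ with the required structure is guaranteed by Lemma~\ref{eqn.ClosedLieSubgroup}. Theorem~\ref{thm.LSIonHomogeneousSpace} then delivers $LSI_C(M,\mathcal{H}^M,\mathcal{D}(\mathcal{E}_M),\mu_t^M)$ with
\begin{equation*}
C\left(M,\mathcal{H}^M,\mu_t^M\right)=C\left(G,\mathcal{H}^G,\mu_t^G\right)=C(\omega_0)t,
\end{equation*}
which establishes both the inequality and the stated form of the constant.

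There is effectively no obstacle in this argument: the heavy lifting was done in Theorem~\ref{thm.LSIonHomogeneousSpace}, where closability of $\mathcal{E}_M$, the identity $\mathcal{E}_M(f,h)=\mathcal{E}_G(f\circ\pi,h\circ\pi)$, and the $L^2$-isometry $\|f\|_{L^2(M,\mu_t^M)}=\|f\circ\pi\|_{L^2(G,\mu_t^G)}$ are used to pull the LSI down the quotient map $\pi:G\to M$. The only thing worth double-checking is that the hypotheses of Theorem~\ref{thm.LSIonHomogeneousSpace}, in particular that $G$ be connected and satisfy H\"ormander's condition, apply here; both are immediate since $\mathbb{H}^1_{\omega_0}$ is connected and satisfies H\"ormander's condition, and both properties are preserved under finite direct products. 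Thus the dimension-independence of the constant on $M$ is a direct consequence of the fact that the LSI constant on $G$ is dimension-independent, combined with the principle that the isotropy group $H$ has no effect on the admissible value of the constant.
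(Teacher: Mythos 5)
Your proposal is correct and matches the paper's intended argument exactly: the paper states Theorem~\ref{thm.LSIonHomogeneousSpaceII} without a separate proof precisely because it follows by combining Proposition~\ref{prop.LSITensorization} (dimension-free tensorization of H.-Q.~Li's LSI on the product group) with Theorem~\ref{thm.LSIonHomogeneousSpace} (transfer of the LSI and its constant from $G$ to $M=H\backslash G$). Your additional check that $G$ is connected and satisfies H\"ormander's condition is a sensible verification of the hypotheses but does not change the route.
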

For the rest of this section, we consider some concrete examples of such homogeneous spaces. We specify their corresponding isotropy subgroups as a closed Lie subgroup $H$ of $G$ and their identification with known quotient spaces. This enables us to prove a dimension-independent logarithmic Sobolev inequality on these spaces.

\subsection{The real line (a non-example)} \label{Section.Heisenberg-Euclidean}

The real line $\mathbb{R}^1$ can be described as a homogeneous space under the transitive action of the three-dimensional isotropic Heisenberg group $\mathbb{H}^1_{\omega_0}$.  In the rest of this section, we identify such a homogeneous space with a quotient space and present a hypoelliptic logarithmic Sobolev inequality there.

Let $G$ be the three-dimensional isotropic Heisenberg group $\mathbb{H}^1_{\omega_0}$. We take $H=\{(x,0,z):x,z
\in\mathbb{R}\}$ and then $H$ is a normal subgroup of $\mathbb{H}^1_{\omega_0}$. The bi-invariant Haar measure $\mu^H=\mu^H_R=\mu^H_L$ on $H$ is the Lebesgue measure $dxdz$ and we choose it as our reference measure on $H$. The left action of $H$ on $G$
\begin{align}
& H \times G \rightarrow G \notag
\\
&
\left((\tilde{x},0,\tilde{z}),(x,y,z)\right) \mapsto (\tilde{x},0,\tilde{z})\circ (x,y,z)=\left(x+\tilde{x},y,z+\tilde{z}+\frac{1}{2}\tilde{x}y\right) \notag
\end{align}
is proper but \emph{not} transitive. However, this action is not isometric in the sub-Riemannian sense. Under the action $H$, for any $g=(x,y,z)\in \mathbb{H}^1_{\omega_0}$ and any $h=(\tilde{x},0,\tilde{z})\in H$, the orthonormal frame $\{\widetilde{X} \left( g \right), \widetilde{Y}\left( g \right)\}$ is mapped to $\{\widetilde{X} \left( g \right), Y-\frac{\widetilde{x}}{2}\frac{\partial}{\partial z}=\frac{\partial}{\partial y}+\frac{x-\widetilde{x}}{2}\frac{\partial}{\partial z}\}$, which is not an orthornomal frame of $\left(\mathbb{H}^1_{\omega_{0}},\mathcal{H}^{\omega_0},\langle\cdot,\cdot\rangle^{\omega_0}_{\mathcal{H}}\right)$.

The homogeneous space $M$ is a Lie group and it is isomorphic to $\mathbb{R}^1$. In this case, the induced sub-Riemannian structure on $M$ is indeed a Riemannian structure. 

For simplicity, we will identify $M$ with $\mathbb{R}^1$ in the rest of this section. We can choose the Lebesgue measure on $\mathbb{R}^1$ as our reference measure. Also, we obtain the following Lie group homomorphism
\begin{align}
& \varphi:\mathbb{H}^1_{\omega_{0}} \rightarrow \mathbb{R}\cong M, \notag
\\
& \varphi(g):=\varphi(x, y, z)=u=y. \notag
\end{align}

In this case, $\Delta_{\mathcal{H}}^{M}$ is the the Laplacian on $\mathbb{R}^1$. In addition, $\mu_t^G$ is hypoelliptic heat kernel measure $\mu_t^{\omega_0}$ on $\mathbb{H}^1_{\omega_{0}}$ and $\mu_t^{M}$ is the elliptic heat kernel measure $\mu_t$ associated to the Laplacian on $\mathbb{R}^1$.

Applying Theorem~\ref{thm.LSIonHomogeneousSpaceII}, we recover the logarithmic Sobolev inequality on $\mathbb{R}^1$ with respect to the heat kernel (Gaussian) measure and we see that we can choose the constant to be $C(\omega_0,t)=C\left(\omega_0\right)t$.

\subsection{The Grushin plane} \label{sec.Grushin}

The Grushin plane is a homogeneous space under the transitive action of $\mathbb{H}^1_{\omega_{0}}$. For the rest of this section, we identify this homogeneous space with a quotient space and present a hypoelliptic logarithmic Sobolev inequality there.

We take  $G=\mathbb{H}^1_{\omega_0}$ as in Section~\ref{Section.Heisenberg-Euclidean}. Let $H=\{(0,y,0):y
\in\mathbb{R}\}$ and it is a not normal subgroup of $\mathbb{H}^1_{\omega_0}$. The bi-invariant Haar measure $\mu^H=\mu^H_R=\mu^H_L$ on $H$ is the Lebesgue measure $dy$ and we choose it as our reference measure on $H$.

The left action of $H$ on $G$
\begin{align}
& H \times G \rightarrow G \notag
\\
&
\left((0,\tilde{y},0),(x,y,z)\right) \mapsto (0,\tilde{y},0)\circ (x,y,z)=\left(x,y+\tilde{y},z-\frac{1}{2}\tilde{x}y\right) \notag
\end{align}
is proper but not transitive. This action is isometric in the sub-Riemannian sense, that is, under the action $H$, an orthonormal frame of $\left(\mathbb{H}^{1}_{\omega_0}, \mathcal{H}^{\omega_0}, \langle \cdot, \cdot \rangle^{\omega_0}_{\mathcal{H}}\right)$ is mapped to an orthonormal frame.

The homogeneous space $M$ is no 
longer a Lie group but it is topologically isomorphic to $\mathbb{R}^2$. For simplicity, we will identify $M$ with $\mathbb{R}^2$ in the rest of this section. In this case the quasi-invariant measure $\mu^M$ under the action of $G$ is the Lebesgue measure on $\mathbb{R}^2$ by \cite[p. 464]{DriverGrossSaloff-Coste2010}. But Popp's measure and Hausdorff measure are $\frac{1}{\vert u\vert}dudv$, which do not coincide with $\mu^M$. We choose the Lebesgue measure on $\mathbb{R}^2$ as our reference measure. Also, we have the following smooth map
\begin{align}
& \varphi:\mathbb{H}^1_{\omega_{0}} \rightarrow \mathbb{R}^2\cong M \notag
\\
& \varphi(g):=\varphi(x, y, z)=(u,v)=\left(x,z+\frac{1}{2}xy\right). \notag
\end{align}

What is special about the geometry of $M$ is that $M$ has a singular Riemannian structure. In this case, 
\begin{align*}
d\varphi_g(\widetilde{X})=\frac{\partial}{\partial u}, \, d\varphi_g(\widetilde{Y})=u\frac{\partial}{\partial v}.
\end{align*}
We see that $d\varphi_g(\widetilde{X})$ and $d\varphi_g(\widetilde{Y})$ are linearly independent except along the line $u=0$. We consider a singular metric on $\mathbb{R}^2$ in such a way that $\left\{d\varphi_g(\widetilde{X}), d\varphi_g(\widetilde{Y})\right\}$ is an orthonormal frame except on the line $u=0$. 

The plane $\mathbb{R}^2$ equipped with such a singular Riemannian structure is called the \emph{Grushin plane}. The operator $\Delta_{\mathcal{H}}^{M}$ has the form
\begin{align*}
\Delta_{\mathcal{H}}^{M}=\frac{\partial^2}{\partial u^2}+u^2\frac{\partial^2}{\partial v^2}
\end{align*}
and it is called the \emph{Grushin operator}. For more details, we refer to \cite[Section 10.3]{CalinChangFurutaniIwasakiBook2011}.

In this case, $\mu_t^G$ is the hypoelliptic heat kernel measure $\mu_t^{\omega_0}$ on $\mathbb{H}^1_{\omega_{0}}$ and $\mu_t^{M}$ is the heat kernel measure on the Grushin plane as defined in Definition~\ref{df.HeatKernelMeasureonQuotientSpace}.

By Theorem~\ref{thm.LSIonHomogeneousSpaceII}, we see that 

\begin{proposition}
On the Grushin plane, there is a natural sub-Riemannian (indeed singular Riemannian) structure $\left(\mathbb{R}^2,\mathcal{H}^{\mathbb{R}^2},\langle\cdot,\cdot\rangle^{\mathbb{R}^2}_{\mathcal{H}}\right)$ induced by the transitive action by $\mathbb{H}^1_{\omega_0}$. Then, $\operatorname{LSI}_C\left(\mathbb{R}^2,\mathcal{H},\mathcal{D}\left(\mathcal{E}_{M}\right),\mu_t^{M}\right)$ holds and the logarithmic Sobolev constant can be chosen to be $C(\omega_0,t)=C\left(\omega_0\right)t$, which is the same as the logarithmic Sobolev constant on $\mathbb{H}^1_{\omega_0}$.
\end{proposition}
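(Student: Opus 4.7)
The plan is to realize the Grushin plane as a homogeneous space for $\mathbb{H}^1_{\omega_0}$ and then apply Theorem~\ref{thm.LSIonHomogeneousSpaceII} with $n=1$. First, I would verify that $H=\{(0,y,0):y\in\mathbb{R}\}$ is a closed Lie subgroup of $\mathbb{H}^1_{\omega_0}$. By Lemma~\ref{eqn.ClosedLieSubgroup} applied to the Carnot algebra $\mathfrak{h}_{\omega_0}=V_1\oplus V_2$ with $V_1=\operatorname{Span}\{X,Y\}$ and $V_2=\operatorname{Span}\{Z\}$, the subspace $\widetilde{V_1}=\operatorname{Span}\{Y\}$, $\widetilde{V_2}=\{0\}$ satisfies $[\widetilde{V_1},\widetilde{V_1}]=\{0\}\subseteq\widetilde{V_2}$, so $H=\operatorname{exp}(\mathfrak{h})$ is a closed subgroup. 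Note that $H$ is not normal, since $[X,Y]=Z\notin\mathfrak{h}$, consistent with the fact that the quotient is not a Lie group.

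Next, I would identify $H\backslash \mathbb{H}^1_{\omega_0}$ with $\mathbb{R}^2$ via the map $\varphi$ given in the text. The map $\varphi(x,y,z)=(x,z+\tfrac{1}{2}xy)$ is constant on left $H$-cosets: for any $(0,\tilde{y},0)\in H$, the action $(0,\tilde{y},0)\star(x,y,z)=(x,y+\tilde{y},z-\tfrac{1}{2}\tilde{y}x)$ leaves $u=x$ and $v=z+\tfrac{1}{2}x y$ invariant, so $\varphi$ descends to a diffeomorphism $H\backslash\mathbb{H}^1_{\omega_0}\cong\mathbb{R}^2$. Using this identification, the computation $d\varphi_g(\widetilde{X})=\partial_u$, $d\varphi_g(\widetilde{Y})=u\,\partial_v$ shows that the horizontal distribution and induced metric on $M$ obtained via Proposition~\ref{prop.SubRiemannianOnHomogeneousSpaces} coincide precisely with the standard singular Riemannian structure of the Grushin plane. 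Consequently the sub-Laplacian $\Delta_{\mathcal{H}}^{M}$ defined by \eqref{df.SubLaplacianonQuotientSpace} equals the Grushin operator $\partial_u^2+u^2\partial_v^2$.

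Having this identification in place, Theorem~\ref{thm.LSIonHomogeneousSpaceII} applies directly with $n=1$ and $G=\mathbb{H}^1_{\omega_0}$: there is a natural sub-Riemannian structure $\left(\mathbb{R}^2,\mathcal{H}^{\mathbb{R}^2},\langle\cdot,\cdot\rangle^{\mathbb{R}^2}_{\mathcal{H}}\right)$ induced by the transitive action, and $\operatorname{LSI}_C\left(\mathbb{R}^2,\mathcal{H}^{\mathbb{R}^2},\mathcal{D}(\mathcal{E}_M),\mu_t^M\right)$ holds with the same constant as on $\mathbb{H}^1_{\omega_0}$, namely $C(\omega_0,t)=C(\omega_0)t$ from Theorem~\ref{t.HQLi}. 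The entire argument is a specialization of the general framework: verification of the closed-subgroup hypothesis, identification of the quotient geometry, and quoting the main theorem.

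I do not anticipate a genuine obstacle here; the only slightly delicate step is the bookkeeping to confirm that the horizontal distribution and the choice of inner product on $M$ produced by the abstract construction in Proposition~\ref{prop.SubRiemannianOnHomogeneousSpaces} match the classical Grushin plane structure. This is handled by the explicit computation of $d\varphi_g(\widetilde{X})$ and $d\varphi_g(\widetilde{Y})$ above and noting that they form an orthonormal pair away from the singular line $\{u=0\}$, which has measure zero with respect to $\mu_t^M$, hence does not affect the logarithmic Sobolev integrals.
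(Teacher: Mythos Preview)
Your proposal is correct and follows essentially the same approach as the paper: the paper sets up the Grushin plane as the quotient $H\backslash\mathbb{H}^1_{\omega_0}$ with $H=\{(0,y,0):y\in\mathbb{R}\}$, identifies the induced structure with the classical Grushin structure via the map $\varphi$ and the computation of $d\varphi_g(\widetilde{X}),\,d\varphi_g(\widetilde{Y})$, and then simply invokes Theorem~\ref{thm.LSIonHomogeneousSpaceII}. Your additional verifications (closedness of $H$ via Lemma~\ref{eqn.ClosedLieSubgroup}, constancy of $\varphi$ on cosets) are natural details that the paper states rather than proves in the surrounding text.
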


\subsection{Non-isotropic Heisenberg groups} \label{sec.ExampleNonisotropic}
A  non-isotropic Heisenberg group can be described as a homogeneous space under the transitive action of $\mathbb{H}^1_{\omega_{0}} \times \cdots \times \mathbb{H}^1_{\omega_{0}}$. For the rest of this section, we identify such a homogeneous space with a quotient space and present a hypoelliptic logarithmic Sobolev inequality.

Let $G$ be the product group $\mathbb{H}^1_{\omega_{0}} \times \cdots \times \mathbb{H}^1_{\omega_{0}}$ of $n$ copies of three-dimensional isotropic Heisenberg groups. For any $0<\alpha_{1}\leqslant \alpha_{2}\leqslant\cdots\leqslant \alpha_p=\alpha_{p+1}=\cdots=\alpha_{n}$, we take $H=\{\left((0,0,z_1),\cdots,(0,0,z_n)\right):z_1,\cdots,z_n
\in\mathbb{R},\sum_{i=1}^n\alpha_iz_i=0\}$ and then $H$ is a normal subgroup of $\mathbb{H}^1_{\omega_{0}} \times \cdots \times \mathbb{H}^1_{\omega_{0}}$. 

The left action of $H$ on $G$
\begin{align*}
& H \times G \rightarrow G 
\\
&
\left((\widetilde{g_1},\cdots,\widetilde{g_n}),(g_1,\cdots,g_n)\right) \longmapsto (g_1,\cdots, g_n) \circ (\widetilde{g_1},\cdots,\widetilde{g_n})
\\
& =\left((\mathbf{v}_1,z_1+\widetilde{z_1}),\cdots,(\mathbf{v}_n,z_n+\widetilde{z_n})\right) 
\end{align*}
is proper but not transitive. This action is isometric in the sub-Riemannian sense, that is, under the action $H$ an orthonormal frame is mapped to an orthonormal frame of $\left(\mathbb{H}^{1}_{\omega_0}\times\cdots\times \mathbb{H}^{1}_{\omega_0}, \mathcal{H}, \langle \cdot, \cdot \rangle_{\mathcal{H}}\right)$.

The homogeneous space $M$ is a Lie group and it is isomorphic to a $(2n+1)$-dimensional non-isotropic Heisenberg group $\mathbb{H}^n_{\omega}$ as defined in \cite[Definition 1.1]{GordinaLuo2022}. For simplicity, we will identify $M$ with $\mathbb{H}^n_{\omega}$ in the rest of this section. We choose the bi-invariant Haar measure which is the Lebesgue measure on $\mathbb{H}^n_{\omega}$ as our reference measure. Also, we obtain the following Lie group homomorphism
\begin{align}
& \pi_{\omega}:\mathbb{H}^1_{\omega_{0}} \times\cdots\times \mathbb{H}^1_{\omega_{0}} \rightarrow \mathbb{H}^{n}_{\omega}\cong M \notag
\\
& \pi_{\omega}(g_1,\cdots,g_n):=\pi_{\omega}(x_{1}, y_{1},z_{1},\cdots,x_{n},y_{n},z_{n})=(x_{1},y_{1},\cdots,z_{2},y_{2}, z),
\\
&
z=\sum_{i=1}^n\alpha_iz_i \notag
\end{align}
from \cite[Section 5]{GordinaLuo2022}. Also \cite{BaudoinGordinaSarkar2023} shows that $\varphi$ is a submersion. 

In this case,  $\Delta_{\mathcal{H}}^{M}$ is the sub-Laplacian $\Delta_{\mathcal{H}}^{\omega}$ on $\mathbb{H}^n_{\omega}$ and $\mu_t^{M}$ is the heat kernel measure $\mu_t^{\omega}$ on $\mathbb{H}^n_{\omega}$, which agrees with \cite[Proposition 5.1]{GordinaLuo2022}.  Applying Theorem~\ref{thm.LSIonHomogeneousSpaceII}, we have

\begin{corollary}[Theorem 4.5 in \cite{GordinaLuo2022}]
There is a natural sub-Riemannian structure $\left(\mathbb{H}^n_{\omega}, \mathcal{H}^{\omega}, \langle\cdot,\cdot\rangle^{\omega}_{\mathcal{H}}\right)$ on $\mathbb{H}^n_{\omega}$ induced by the transitive action by $\mathbb{H}^1_{\omega_{0}} \times\cdots\times \mathbb{H}^1_{\omega_{0}}$.
Then $\operatorname{LSI}_C(\mathbb{H}^n_{\omega},\mathcal{H}^{\omega},\mathcal{D}\left(\mathcal{E}_{\mathbb{H}^n_{\omega}}\right),\mu_t^{\omega})$ holds, and the logarithmic Sobolev constant can be chosen to be $C(\omega_0,t)=C\left(\omega_0\right)t$, which is the same as the logarithmic Sobolev constant on $\mathbb{H}^1_{\omega_0}$.
\end{corollary}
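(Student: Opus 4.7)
The plan is to recognize this corollary as a direct specialization of Theorem~\ref{thm.LSIonHomogeneousSpaceII}, so the work is essentially to verify the hypotheses and the identifications. First I would check that the prescribed subgroup
\[
H = \Bigl\{\bigl((0,0,z_1),\ldots,(0,0,z_n)\bigr) : z_i \in \mathbb{R},\ \sum_{i=1}^n \alpha_i z_i = 0\Bigr\}
\]
is a closed (indeed normal) Lie subgroup of $G = \mathbb{H}^1_{\omega_0} \times \cdots \times \mathbb{H}^1_{\omega_0}$. This is immediate from Lemma~\ref{eqn.ClosedLieSubgroup}: the Lie algebra $\mathfrak{h}$ sits entirely inside the center $V_2 = \operatorname{Span}\{\widetilde{Z_1},\ldots,\widetilde{Z_n}\}$, so $\widetilde{V_1} = 0$ and the bracket condition $[\widetilde{V_1},\widetilde{V_1}] \subseteq \widetilde{V_2}$ is trivial. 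Normality is clear since $\mathfrak{h}$ is central in $\mathfrak{g}$.

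Next I would identify the quotient $M = H\backslash G$ with the non-isotropic Heisenberg group $\mathbb{H}^n_\omega$ via the Lie group homomorphism
\[
\pi_\omega(x_1,y_1,z_1,\ldots,x_n,y_n,z_n) = \bigl(x_1,y_1,\ldots,x_n,y_n,\textstyle\sum_i \alpha_i z_i\bigr).
\]
The kernel of $\pi_\omega$ is exactly $H$, so by the first isomorphism theorem $\pi_\omega$ descends to a Lie group isomorphism $H\backslash G \to \mathbb{H}^n_\omega$. I would then verify that under this identification the induced sub-Riemannian structure of Proposition~\ref{prop.SubRiemannianOnHomogeneousSpaces} coincides with the standard one on $\mathbb{H}^n_\omega$: since $d\pi_\omega$ sends the horizontal frame $\{\widetilde{X_i},\widetilde{Y_i}\}$ of $G$ to a horizontal frame on $\mathbb{H}^n_\omega$ which is linearly independent everywhere (the horizontal directions are transverse to $H$), this is direct from the construction. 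Similarly, since $H$ is compact-free and central, the pushforward measure $\pi_\omega{}_\sharp \mu_t^G$ agrees with the standard heat kernel measure $\mu_t^\omega$ on $\mathbb{H}^n_\omega$, matching Definition~\ref{df.HeatKernelMeasureonQuotientSpace} with the construction in \cite[Proposition 5.1]{GordinaLuo2022}.

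Finally I would invoke Theorem~\ref{thm.LSIonHomogeneousSpaceII}. That theorem already encapsulates the two nontrivial inputs: H.-Q.~Li's logarithmic Sobolev inequality on $\mathbb{H}^1_{\omega_0}$ (Theorem~\ref{t.HQLi}) and the tensorization Proposition~\ref{prop.LSITensorization} which furnishes a dimension-free constant $C(\omega_0)t$ on the product group $G$. Since all the hypotheses have been checked, the conclusion $LSI_C(\mathbb{H}^n_\omega,\mathcal{H}^\omega,\mathcal{D}(\mathcal{E}_{\mathbb{H}^n_\omega}),\mu_t^\omega)$ with constant $C(\omega_0,t) = C(\omega_0)t$ follows immediately. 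The only step I would expect to require any care is confirming that the induced sub-Riemannian metric on $\mathbb{H}^n_\omega$ from Proposition~\ref{prop.SubRiemannianOnHomogeneousSpaces} really is the standard left-invariant one built from the basis $\{\widetilde{X_i^\omega},\widetilde{Y_i^\omega}\}$; this amounts to checking that the restriction $d\pi_{\omega}\vert_{\mathcal{H}^G_e} : \mathcal{H}^G_e \to \mathcal{H}^{\mathbb{H}^n_\omega}_e$ is an isometry, which holds because $\mathfrak{h} \cap \mathcal{H}^G_e = 0$ and the horizontal frame is mapped bijectively onto the standard horizontal frame of $\mathbb{H}^n_\omega$.
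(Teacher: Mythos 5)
Your proposal is correct and follows essentially the same route as the paper: identify $H$ as the (normal, central) kernel of the homomorphism $\pi_\omega$, identify $M=H\backslash G$ with $\mathbb{H}^n_\omega$ together with its induced sub-Riemannian structure and heat kernel measure (the paper cites \cite[Proposition 5.1]{GordinaLuo2022} for the latter identification, which you argue directly), and then apply Theorem~\ref{thm.LSIonHomogeneousSpaceII}, whose constant $C(\omega_0)t$ comes from Theorem~\ref{t.HQLi} and the tensorization in Proposition~\ref{prop.LSITensorization}. The paper treats the corollary as an immediate consequence of that theorem after this setup, so your additional verifications (kernel computation, transversality of $\mathcal{H}^G_e$ to $\mathfrak{h}$) only make explicit what the paper leaves implicit.
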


\subsection{The Heisenberg-like group} \label{sec.ExampleHeisenbergType}

The Heisenberg-like group is a homogeneous space under the transitive action of $\mathbb{H}^1_{\omega_{0}} \times \cdots \times \mathbb{H}^1_{\omega_{0}}$. For the rest of this section, we identify this homogeneous space with a quotient space and present a hypoelliptic logarithmic Sobolev inequality.

Let $G$ be the product group $\mathbb{H}^1_{\omega_{0}} \times \cdots \times \mathbb{H}^1_{\omega_{0}}$ of $n$ copies of three-dimensional isotropic Heisenberg groups as in Section~\ref{sec.ExampleNonisotropic}.

For any nonzero $\alpha_i^k$ where $i=1,\cdots,n$ and $k=1,\cdots,m$ with $m\in\mathbb{N}^{+}$, we take $H=\{\left((0,0,z_1),\cdots,(0,0,z_n)\right):\sum_{i=1}^{n} \alpha^j_iz_i=0,j=1,\cdots,m,z_i
\in\mathbb{R},i=1,\cdots,n\}$ and then $H$ is a normal subgroup of $\mathbb{H}^1_{\omega_{0}} \times \cdots \times \mathbb{H}^1_{\omega_{0}}$. 

The left action of $H$ on $G$
\begin{align*}
& H \times G \rightarrow G 
\\
&
\left((\widetilde{g_1},\cdots,\widetilde{g_n}),(g_1,\cdots,g_n)\right) \longmapsto (g_1,\cdots,g_n) \circ (\widetilde{g_1},\cdots,\widetilde{g_n})
\\
& =\left((\mathbf{v}_1,z_1+\widetilde{z_1}),\cdots,(\mathbf{v}_n,z_n+\widetilde{z_n})\right)
\end{align*}
is proper but not transitive. This action is isometric in the sub-Riemannian sense, that is, under the action $H$, an orthonormal frame is mapped to an orthonormal frame of $\left(\mathbb{H}^{1}_{\omega_0}\times\cdots\times \mathbb{H}^{1}_{\omega_0}, \mathcal{H}, \langle \cdot, \cdot \rangle_{\mathcal{H}}\right)$.

The homogeneous space $M$ is a Lie group and it is isomorphic to a Heisenberg-like group $\mathbb{H}^{(n,m)}_{\omega}$. For simplicity, we will identify $M$ with $\mathbb{H}^{(n,m)}_{\omega}$ for the rest of this section. 

\begin{definition}
A \emph{Heisenberg-like group} $\mathbb{H}^{(n,m)}_{\omega}$ is the set $\mathbb{R}^{2n}\times\mathbb{R}^m$ equipped with the group law given by
\begin{align}
& \left( \mathbf{v}, \mathbf{z} \right)\star \left( \mathbf{v}^{\prime}, \mathbf{z}^{\prime} \right)=\left(\mathbf{v}+\mathbf{v}^{\prime},\mathbf{z}+\mathbf{z}^{\prime}+\frac{1}{2}\omega\left( \mathbf{v}, \mathbf{v}^{\prime} \right)\right), \label{GroupLaw2}
\\
& \mathbf{v}=\left( x_{1},y_{1},\cdots,x_{n},y_{n}  \right), \mathbf{v}^{\prime}=\left( x_{1}^{\prime},y_{1}^{\prime},\cdots,x_{n}^{\prime},y_{n}^{\prime} \right) \in \mathbb{R}^{2n},
\notag
\\
&
\mathbf{z}=\left( z_{1},\cdots,z_{m}  \right), \mathbf{z}^{\prime}=\left( z_{1}^{\prime},\cdots,z_{m}^{\prime} \right) \in \mathbb{R}^{m}, \notag
\\
& \omega: \mathbb{R}^{2n} \times \mathbb{R}^{2n} \longrightarrow \mathbb{R}^m
\notag
\end{align}
and $\omega$ is a non-degenerate skew-symmetric bilinear form.
\end{definition}

\begin{notation}
Note that $\omega$ contains information on $m$ and $n$. To abuse notation, we will use $\mathbb{H}_{\omega}$ instead of $\mathbb{H}_{\omega}^{(n,m)}$ later on.
\end{notation}
For each $k=1,\cdots,m$, we define $\omega_k:\mathbb{R}^{2n} \times \mathbb{R}^{2n} \longrightarrow \mathbb{R}$ as $\omega_k\left(\mathbf{v},\mathbf{v}^{\prime}\right)=z_k$ for any $\mathbf{v},\mathbf{v}^{\prime}\in\mathbb{R}^{2n}$ with $\omega\left(\mathbf{v},\mathbf{v}^{\prime}\right)=\left(z_1,\cdots,z_m\right)\in \mathbb{R}^m$. Note that $\omega_k:\mathbb{R}^{2n} \times \mathbb{R}^{2n} \longrightarrow \mathbb{R}$ is a symplectic form on $\mathbb{R}^{n}$ and thus it has the explicit form
\begin{align*}
\omega_k\left( \mathbf{v}, \mathbf{v}^{\prime} \right):=\sum_{i=1}^{n} \alpha^k_i\left(x_{i}y_{i}^{\prime}-x_{i}^{\prime}y_{i}\right)
\end{align*}
where $\alpha^k_{1}, \alpha^k_{2}, \cdots, \alpha^k_{n}$ are nonzero  constants. Then \eqref{GroupLaw2} has the explicit form below
\begin{align}
\left( \mathbf{v}, \mathbf{z} \right)\star \left( \mathbf{v}^{\prime}, \mathbf{z}^{\prime} \right)=\left(\mathbf{v}+\mathbf{v}^{\prime},z_1+z_1^{\prime}+\frac{1}{2}\omega_1\left( \mathbf{v}, \mathbf{v}^{\prime} \right),\cdots,z_m+z_m^{\prime}+\frac{1}{2}\omega_m\left( \mathbf{v}, \mathbf{v}^{\prime} \right)\right), \label{GroupLaw3}
\end{align}
which coincides with \cite[Definition 3.6.1]{BonfiglioliLanconelliUguzzoniBook}. When $m=1$ and $0 \leqslant \alpha^1_1 \leqslant \alpha^1_2 \leqslant \cdots \leqslant \alpha^1_n$, we get the non-isotropic Heisenberg group.

We choose the bi-invariant Haar measure which is the Lebesgue measure on $\mathbb{H}_{\omega}$ as our reference measure. Also, we obtain the following Lie group homomorphism
\begin{align*}
& \pi_{\omega}:\mathbb{H}^1_{\omega_{0}} \times \cdots \times \mathbb{H}^1_{\omega_{0}} \rightarrow \mathbb{H}_{\omega}\cong M,
\\
& \pi_{\omega}(g_1,\cdots,g_n):=\pi_{\omega}(x_{1},y_{1},z_{1},\cdots,x_{n},y_{n},z_{n})
\\
& =(x_{1},y_{1},\cdots,x_{n},y_{n},\sum_{i=1}^{n} \alpha^1_iz_i,\cdots, \sum_{i=1}^{n} \alpha^m_iz_i)
\end{align*}
for any $(g_1,\cdots,g_n)\in \mathbb{H}^1_{\omega_{0}} \times \cdots \times \mathbb{H}^1_{\omega_{0}}$.

In this case, $\Delta_{\mathcal{H}}^{M}$ is the sub-Laplacian $\Delta_{\mathcal{H}}^{\omega}$ on $\mathbb{H}_{\omega}$ and $\mu_t^{M}$ is the heat kernel measure $\mu_t^{\omega}$ on $\mathbb{H}_{\omega}$. Applying Theorem~\ref{thm.LSIonHomogeneousSpaceII} gives us the following result. 

\begin{proposition}
On the Heisenberg-like group $\mathbb{H}_{\omega}$, there is a natural sub-Riemannian structure $\left(\mathbb{H}_{\omega},\mathcal{H}^{\omega},\langle\cdot,\cdot\rangle^{\omega}_{\mathcal{H}}\right)$ induced by the transitive action by $\mathbb{H}^1_{\omega_{0}} \times\cdots\times \mathbb{H}^1_{\omega_{0}}$. Then, $\operatorname{LSI}_C(\mathbb{H}_{\omega},\mathcal{H}^{\omega},\mathcal{D}\left(\mathcal{E}_{\mathbb{H}_{\omega}}\right),
\mu_t^{\omega})$ holds, and the logarithmic Sobolev constant can be chosen to be $C(\omega_0,t)=C\left(\omega_0\right)t$, which is the same as the logarithmic Sobolev constant on $\mathbb{H}^1_{\omega_0}$. Therefore the logarithmic Sobolev constant $C\left(\omega,t\right)=C\left(\omega_0,t\right)=C\left(\omega_0\right)t$ is independent of $\omega$, $m$ and $n$, and therefore of the dimension of $\mathbb{H}_{\omega}$. 
\end{proposition}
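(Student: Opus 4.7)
The plan is to apply Theorem~\ref{thm.LSIonHomogeneousSpaceII} directly with $G = \mathbb{H}^{1}_{\omega_{0}} \times \cdots \times \mathbb{H}^{1}_{\omega_{0}}$ ($n$ copies) and $H$ as specified. First I would verify that $H = \{((0,0,z_{1}),\ldots,(0,0,z_{n})) : \sum_{i=1}^{n} \alpha^{j}_{i} z_{i} = 0, \, j = 1,\ldots,m\}$ is a closed normal Lie subgroup. Since $H$ lies entirely in the center of $G$, normality is automatic; closedness follows because $H$ is the zero set of a continuous linear map from $\mathbb{R}^{n}$ to $\mathbb{R}^{m}$. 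The Lie algebra $\mathfrak{h}$ has the form required by Lemma~\ref{eqn.ClosedLieSubgroup}, with $\widetilde{V_{1}} = \{0\}$ and $\widetilde{V_{2}}$ equal to the linear subspace cut out by the $m$ equations $\sum_{i=1}^{n} \alpha^{j}_{i} z_{i} = 0$.

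Next I would identify $M = H \backslash G$ with $\mathbb{H}_{\omega}$ as Lie groups via the explicit map $\pi_{\omega}$ displayed above. A direct computation using the group law \eqref{GroupLaw.n=1} on $G$ and \eqref{GroupLaw3} on $\mathbb{H}_{\omega}$ shows that $\pi_{\omega}$ is a surjective Lie group homomorphism with kernel exactly $H$; the first isomorphism theorem then yields $M \cong \mathbb{H}_{\omega}$. Under this identification, the differentials $d\pi_{\omega}(\widetilde{X_{i}})$ and $d\pi_{\omega}(\widetilde{Y_{i}})$ are precisely the canonical left-invariant horizontal generators of $\mathbb{H}_{\omega}$. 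Because the action of $H$ is isometric in the sub-Riemannian sense (its elements act by translations in the central directions only), the sub-Riemannian structure $(\mathcal{H}^{M}, \langle \cdot, \cdot \rangle^{M}_{\mathcal{H}})$ constructed in Proposition~\ref{prop.SubRiemannianOnHomogeneousSpaces} agrees with the standard left-invariant structure $(\mathcal{H}^{\omega}, \langle \cdot, \cdot \rangle^{\omega}_{\mathcal{H}})$ on $\mathbb{H}_{\omega}$. In particular $\Delta^{M}_{\mathcal{H}} = \Delta^{\omega}_{\mathcal{H}}$ and $\mu^{M}_{t} = \mu^{\omega}_{t}$.

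Finally, I would combine Proposition~\ref{prop.LSITensorization}, which provides $LSI$ on $G$ with the constant $C(\omega_{0}, t) = C(\omega_{0}) t$ that is independent of $n$ by tensorization, with Theorem~\ref{thm.LSIonHomogeneousSpaceII} to transfer this inequality to $M \cong \mathbb{H}_{\omega}$ with exactly the same constant. Since the constant depends only on the single factor $\mathbb{H}^{1}_{\omega_{0}}$ and not on the choice of the subgroup $H$ (i.e.\ on the coefficients $\alpha^{j}_{i}$ that encode $\omega$, $m$, and $n$), dimension independence is immediate.

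I do not anticipate a serious obstacle: all analytic content is packaged in Theorem~\ref{thm.LSIonHomogeneousSpaceII} and in H.-Q.\ Li's Theorem~\ref{t.HQLi}. The only point that requires real verification is the coincidence of the quotient sub-Riemannian structure with the intrinsic structure on $\mathbb{H}_{\omega}$, which reduces to checking that $d\pi_{\omega}$ sends the distinguished orthonormal horizontal frame on $G$ to the distinguished orthonormal horizontal frame on $\mathbb{H}_{\omega}$. This is immediate from the fact that the first $2n$ coordinates of $\pi_{\omega}$ are the identity, so only the central coordinates are affected by the quotient.
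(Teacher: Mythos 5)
Your proposal is correct and follows essentially the same route as the paper: take $G=\mathbb{H}^1_{\omega_0}\times\cdots\times\mathbb{H}^1_{\omega_0}$, observe that $H$ is a closed (central, hence normal) subgroup, identify $M=H\backslash G$ with $\mathbb{H}_{\omega}$ via the homomorphism $\pi_{\omega}$, and apply Theorem~\ref{thm.LSIonHomogeneousSpaceII} (which rests on the tensorization in Proposition~\ref{prop.LSITensorization} and the transfer principle of Theorem~\ref{thm.LSIonHomogeneousSpace}) to obtain the constant $C(\omega_0)t$ independent of $\omega$, $m$, and $n$. Your additional checks (that $\ker\pi_{\omega}=H$ and that $d\pi_{\omega}$ carries the distinguished horizontal frame of $G$ to that of $\mathbb{H}_{\omega}$, so the quotient sub-Riemannian structure coincides with the intrinsic one) are exactly the verifications the paper performs, stated slightly more explicitly.
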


\section{Compact Heisenberg manifolds} \label{sec.ExamplesNilmanifold}

Many \emph{compact nilmanifolds} can be regarded as homogeneous spaces under the transitive action of a connected nilpotent Lie group $G$. Such a homogeneous space $M$ is not necessarily a Lie group. In this case, we take the isotropy subgroup $H$ to be a discrete lattice subgroup of $G$. Note that different choices of $H$ in this case can affect the topology of $M$ significantly. For example, depending on $H$, the fundamental group of $M$ could be completely different, e.~g. \cite[Corollary 2.5]{GordonWilson1986}.

A compact nilmanifold $M$ has a sub-Riemannian structure $\left(M,\mathcal{H}^{M},\langle\cdot,\cdot\rangle^{M}_{\mathcal{H}}\right)$ as described in Section~\ref{sec.SubRiemannianStructureonHomogoneousSpaces}. We denote by $\mu_t^{G}$ the hypoelliptic heat kernel measure on $G$ associated to the sub-Laplacian $\Delta_{\mathcal{H}}^{G}$. Then $\Delta_{\mathcal{H}}^{M}$ is the sub-Laplacian on $M$ and $\mu_t^{M}$ is the heat kernel measure on $M$. 

A very important class of compact nilmaniolds is the \emph{compact Heisenberg manifolds}. That is, we take $G$ to be an $(2n+1)$-dimensional isotropic Heisenberg group and $H$ to be a discrete lattice subgroup. In this case, the homogeneous space $M$ has a natural strongly pseudo-convex CR structure and can be realized as the boundary of a bounded domain in a line bundle over an Abelian variety. e.~g. \cite{Folland2004}.

Theorem~\ref{thm.LSIonHomogeneousSpace} allows us to prove a hypoelliptic logarithmic Sobolev inequality on $M$ and show that the constant is independent of its dimension.

\begin{theorem} \label{thm.LSIonHeisenbergManifold}
There is a natural sub-Riemannian structure $\left(M, \mathcal{H}^{M}, \langle\cdot, \cdot\rangle^{M}_{\mathcal{H}}\right)$ on the compact Heisenberg manifold $M$ induced by the transitive action by the $(2n+1)$-dimensional isotropic Heisenberg group, and the logarithmic Sobolev inequality $LSI_C\left(M,\mathcal{H}^{M},\mathcal{D}\left(\mathcal{E}_{M}\right),\mu_t^{M}\right)$ holds with the constant $C\left(M,\mathcal{H}^{M},\mu_t^{M}\right)$. Moreover, we can choose
\begin{align*}
C\left(M,\mathcal{H}^{M},\mu_t^{M}\right)=C(\omega_0,t)=C\left(\omega_0\right)t,
\end{align*}
which is the same as the constant for the isotropic Heisenberg group $\mathbb{H}^1_{\omega_{0}}$. Thus the logarithmic Sobolev constant $C\left(M,\mathcal{H}^{M},\mu_t^{M}\right)$ on $M$ is independent of its dimension.
\end{theorem}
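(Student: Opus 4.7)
The plan is to identify the compact Heisenberg manifold $M$ with a quotient space $H\backslash G$, where $G$ is the $(2n+1)$-dimensional isotropic Heisenberg group and $H$ is the prescribed discrete lattice subgroup, and then to invoke Theorem~\ref{thm.LSIonHomogeneousSpace} with this pair. Every discrete subgroup of a Lie group is automatically closed, so the closedness hypothesis on $H$ is free, and the quotient map $\pi \colon G \to M$ is the smooth normal covering map already described in Section~\ref{sec.HomogeneousSpaces}. With this identification the natural sub-Riemannian structure of Proposition~\ref{prop.SubRiemannianOnHomogeneousSpaces}, the heat kernel measure $\mu_t^M = \pi_{\sharp}\mu_t^G$ of Definition~\ref{df.HeatKernelMeasureonQuotientSpace}, and the Dirichlet form $\mathcal{E}_M$ of Theorem~\ref{thm.DirichletFormRelation} are all available on $M$ without further work.

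To apply Theorem~\ref{thm.LSIonHomogeneousSpace} I still need an LSI on $G$ itself with constant $C(\omega_0,t)$. The key observation is that the $(2n+1)$-dimensional isotropic Heisenberg group is precisely the special case of the non-isotropic Heisenberg group $\mathbb{H}^n_{\omega}$ of Section~\ref{sec.ExampleNonisotropic} in which the weights $\alpha_1 = \cdots = \alpha_n$ are equal. Consequently Theorem~\ref{thm.LSIonHomogeneousSpaceII} already supplies the desired LSI with constant $C(\omega_0,t) = C(\omega_0)t$, a constant traceable through the tensorization in Proposition~\ref{prop.LSITensorization} all the way back to the three-dimensional result of H.-Q.~Li (Theorem~\ref{t.HQLi}), and hence independent of $n$. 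Feeding this into Theorem~\ref{thm.LSIonHomogeneousSpace} gives
\[
C(M,\mathcal{H}^M,\mu_t^M) = C(G,\mathcal{H}^G,\mu_t^G) = C(\omega_0)t,
\]
which is precisely the dimension-free constant claimed in the statement.

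The main subtlety, rather than a sharp obstacle, is to confirm that the object called the compact Heisenberg manifold in the CR-geometric literature (see, e.g., \cite{Folland2004}) really is $H\backslash G$ in the sense of this paper for the relevant discrete lattice $H \subset G$, and that the sub-Riemannian structure built from the transitive $G$-action via Proposition~\ref{prop.SubRiemannianOnHomogeneousSpaces} coincides with the one obtained by descending the canonical left-invariant horizontal frame through the covering $\pi$. Once this identification is in place, the constant is preserved without any deterioration, since the proof of Theorem~\ref{thm.LSIonHomogeneousSpace} uses only the $L^{2}$-isometry $\Vert f\Vert_{L^2(M,d\mu_t^M)} = \Vert f\circ\pi\Vert_{L^2(G,d\mu_t^G)}$ and the form identity $\mathcal{E}_M(f) = \mathcal{E}_G(f\circ\pi)$, both of which remain valid across the covering map $\pi$.
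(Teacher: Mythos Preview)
Your proposal is correct and follows essentially the same approach as the paper: the paper's entire argument is the one-line invocation ``Theorem~\ref{thm.LSIonHomogeneousSpace} allows us to prove a hypoelliptic logarithmic Sobolev inequality on $M$,'' with the needed LSI on the $(2n+1)$-dimensional isotropic Heisenberg group supplied by the non-isotropic result in Section~\ref{sec.ExampleNonisotropic} (the Corollary quoting \cite[Theorem~4.5]{GordinaLuo2022}, itself a consequence of Theorem~\ref{thm.LSIonHomogeneousSpaceII}). Your write-up is in fact more detailed than the paper's, spelling out why $H$ is closed and which ingredients of Theorem~\ref{thm.LSIonHomogeneousSpace} carry the constant across the covering.
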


Moreover, note that the logarithmic Sobolev constant on any compact Heisenberg manifold can be chosen to be the same as those of some non-compact homogeneous spaces that we considered in Section~\ref{sec.Examples}.

\section{$\operatorname{SU}(2)$ and related quotient spaces} \label{sec.ExampleSU(2)Type}

Another type of examples are built on $\operatorname{SU}(2)$, which is another model space in sub-Riemannian geometry. The transitive action on such homogeneous spaces is by $\operatorname{SU}(2)$ or their product groups.

The Lie group $\operatorname{SU}(2)$ is the group of $2\times 2$ complex unitary
matrices of determinant $1$, i.e.
\begin{align*}
G=\operatorname{SU}(2)=
\left\{\begin{pmatrix}
z_1 & z_2 \\
-\overline{z_2} & \overline{z_1}
\end{pmatrix}:z_1,z_2\in \mathbb{C},\vert z_1\vert^2+\vert z_2\vert^2=1 \right\}.   
\end{align*}
Its Lie algebra $\mathfrak{su}(2)$ consists of $2\times 2$ complex skew-adjoint
matrices with trace $0$. A basis of $\mathfrak{su}(2)$ is formed by the Pauli matrices
\begin{align*}
X=\begin{pmatrix}
0 & 1 \\ -1 & 0
\end{pmatrix}, Y=\begin{pmatrix}
0 & i \\ i & 0
\end{pmatrix}, Z=\begin{pmatrix}
i & 0 \\ 0 & -i
\end{pmatrix},
\end{align*}
for which the following relationships hold
\begin{align*}
[X,Y]=2Z,[Y,Z]=2X,[Z,X]=2Y.
\end{align*}
We denote $\widetilde{X},\widetilde{Y},\widetilde{Z}$ the left-invariant vector fields on $\operatorname{SU}(2)$ corresponding to the Pauli
matrices $X, Y, Z$. Then $\operatorname{SU}(2)$ can be equipped with a natural sub-Riemannian structure $\left(\operatorname{SU}(2),\mathcal{H},\langle\cdot,\cdot\rangle_{\mathcal{H}}\right)$ where $\mathcal{H}_g=\operatorname{Span}\{\widetilde{X}(g),\widetilde{Y}(g)\}$ at any $g\in \operatorname{SU}(2)$ and $\{\widetilde{X},\widetilde{Y}\}$ forms an orthonormal frame for $\langle\cdot,\cdot\rangle_{\mathcal{H}}$. The sub-Laplacian on $\operatorname{SU}(2)$ has the form $\Delta_{\mathcal{H}}^{\operatorname{SU}(2)}=(\widetilde{X})^2+(\widetilde{Y})^2$.

Let $\mu_t^{\operatorname{SU}(2)}$ be the hypoelliptic heat kernel measure on $\operatorname{SU}(2)$ associated to the sub-Laplacian $\Delta_{\mathcal{H}}^{SU(2)}$. The logarithmic Sobolev inequality respect to the heat kernel measure is known to hold in this case (see \cite[Theorem 1.4 and p. 2651]{BaudoinBonnefont2012}).

For $n\geqslant 1$, let $G$ be the product group $\operatorname{SU}(2) \times \cdots \times \operatorname{SU}(2)$ of $n$ copies of $\operatorname{SU}(2)$. The product group has a product sub-Riemannian structure $\left(\operatorname{SU}(2) \times \cdots \times \operatorname{SU}(2),\mathcal{H}, \langle \cdot, \cdot \rangle_{\mathcal{H}}\right)$. The group $G$ is unimodular, and therefore we choose the bi-invariant Haar measure $\mu^G=\mu^G_R=\mu^G_L$ on $\operatorname{SU}(2) \times \cdots \times \operatorname{SU}(2)$ as our reference measure on $\operatorname{SU}(2) \times \cdots \times \operatorname{SU}(2)$. 

The hypoelliptic heat kernel measure $\mu_t^{G}$ associated to the sub-Laplacian $\Delta_{\mathcal{H}}$ on the product group is the product measure $\mu_t^{\operatorname{SU}(2)} \otimes\cdots\otimes \mu_t^{\operatorname{SU}(2)}$. The logarithmic Sobolev inequality with respect to the heat kernel measure can be obtained via a tensorization argument, which was first used in \cite{Gross1975c}. More convenient to our setting is \cite[Proposition 18]{Schechtman2003a} giving the following result for the product group $\operatorname{SU}(2)\times\cdots\times  \operatorname{SU}(2)$. 

Before we present the result, we would like to discuss which function space we work with on the product group $\operatorname{SU}(2)\times\cdots\times  \operatorname{SU}(2)$. In 
\cite{BaudoinBonnefont2012},  the logarithmic Sobolev inequality was proved for functions in $C^{\infty}_c\left(\operatorname{SU}(2)\right)$. The tensorization argument allows us to prove the inequality for functions in $C^{\infty}_c\left(\operatorname{SU}(2)\right)\otimes \cdots \otimes C^{\infty}_c\left(\operatorname{SU}(2)\right)$, which is dense in $\mathcal{D}\left(\mathcal{E}_{\operatorname{SU}(2)\times \cdots\times \operatorname{SU}(2)}\right)$. Then the consideration of functions can be extended to $\mathcal{D}\left(\mathcal{E}_{\operatorname{SU}(2)\times \cdots\times \operatorname{SU}(2)}\right)$ thanks to the closedness of $\mathcal{E}_{\operatorname{SU}(2)\times \cdots\times \operatorname{SU}(2)}$. 

\begin{proposition} \label{prop.LSITensorizationII} 
On the product group $\operatorname{SU}(2)\times \cdots\times \operatorname{SU}(2)$, the logarithmic Sobolev inequality \eqref{LSI} with respect to the heat kernel measure $\mu_t^{G}$ holds with a constant $C\left(n,t\right)$, where the constant can be chosen to be $C\left(n,t\right)=C\left(\operatorname{SU}(2),t\right)$ which is independent of $n$.
\end{proposition}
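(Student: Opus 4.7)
The plan is to invoke the classical tensorization theorem for logarithmic Sobolev inequalities, originally due to Gross \cite{Gross1975c} and formulated cleanly in Proposition 18 of \cite{Schechtman2003a}, which asserts that if $n$ Markov triples each satisfy $LSI$ with constants $C_1, \ldots, C_n$, then the product triple satisfies $LSI$ with constant $\max_i C_i$. The structural observation underpinning this is that the sub-Laplacian on $G = \operatorname{SU}(2) \times \cdots \times \operatorname{SU}(2)$ decomposes as
\begin{align*}
\Delta_{\mathcal{H}} = \sum_{i=1}^{n}\bigl((\widetilde{X}_i)^2 + (\widetilde{Y}_i)^2\bigr),
\end{align*}
where $\widetilde{X}_i, \widetilde{Y}_i$ act only on the $i$-th factor, and these partial operators pairwise commute. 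Consequently the semigroup factorizes, $P_t^{G} = P_t^{\operatorname{SU}(2)} \otimes \cdots \otimes P_t^{\operatorname{SU}(2)}$, so $\mu_t^G$ is the product measure $(\mu_t^{\operatorname{SU}(2)})^{\otimes n}$ and the Dirichlet form decomposes as $\mathcal{E}_{\mu_t^G}(f) = \sum_{i=1}^{n} \int_{G}\bigl(|\widetilde{X}_i f|^2 + |\widetilde{Y}_i f|^2\bigr)\, d\mu_t^G$.

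First I would verify the standard subadditivity of entropy for product measures: for $f \in C^\infty_c(\operatorname{SU}(2)) \otimes \cdots \otimes C^\infty_c(\operatorname{SU}(2))$,
\begin{align*}
\operatorname{Ent}_{\mu_t^G}(f^2) \leqslant \sum_{i=1}^{n} \int_{G} \operatorname{Ent}_{\mu_t^{\operatorname{SU}(2)}, i}(f^2)\, d\mu_t^G,
\end{align*}
where the inner entropy is taken with respect to the $i$-th variable with all other variables frozen. This is proved by induction on $n$ using the additivity of conditional entropy and is valid for any product probability measure. Applying the single-factor LSI of Baudoin--Bonnefont \cite[Theorem 1.4]{BaudoinBonnefont2012} to each inner entropy with constant $C(\operatorname{SU}(2), t)$, then using Fubini to interchange the sum and the outer integral, yields
\begin{align*}
\operatorname{Ent}_{\mu_t^G}(f^2) \leqslant C(\operatorname{SU}(2), t) \sum_{i=1}^{n} \int_{G} \bigl(|\widetilde{X}_i f|^2 + |\widetilde{Y}_i f|^2\bigr) d\mu_t^G = C(\operatorname{SU}(2), t)\, \mathcal{E}_{\mu_t^G}(f),
\end{align*}
so the constant $C(n, t) = C(\operatorname{SU}(2), t)$ is independent of $n$.

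Finally, to extend the inequality from tensor-product test functions to the full Dirichlet form domain $\mathcal{D}(\mathcal{E}_{\mu_t^G})$, I would observe that finite linear combinations of pure tensors $f_1(g_1) \cdots f_n(g_n)$ with $f_i \in C^\infty_c(\operatorname{SU}(2))$ are dense in $C^\infty_c(G)$ in the uniform topology by Stone--Weierstrass, and a standard mollification argument upgrades this to density in $\mathcal{D}(\mathcal{E}_{\mu_t^G})$ under $\|\cdot\|_{\mathcal{E}_{\mu_t^G}}$. The conclusion then follows from Proposition~\ref{prop.LimitInLSI} together with the closedness of $\mathcal{E}_{\mu_t^G}$. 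The main obstacle is mostly bookkeeping: one must ensure the subadditivity-of-entropy step is justified by Fubini on a sufficiently large class of test functions, which is immediate since the hypoelliptic heat kernel densities are strictly positive and $C^\infty$ on $G$. No new geometric input is needed beyond the single-factor LSI.
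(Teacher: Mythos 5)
Your proposal is correct and follows essentially the same route as the paper: the paper likewise obtains the result by tensorization of the single-factor inequality of Baudoin--Bonnefont (citing \cite{Gross1975c} and \cite[Proposition 18]{Schechtman2003a}), proving the inequality first for functions in $C^{\infty}_c(\operatorname{SU}(2))\otimes\cdots\otimes C^{\infty}_c(\operatorname{SU}(2))$ and then extending to $\mathcal{D}\left(\mathcal{E}_{\mu_t^G}\right)$ by density and the closedness of the Dirichlet form. You merely spell out the subadditivity-of-entropy step that the paper delegates to the cited references.
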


The homogeneous space characterization theorem, Theorem~\ref{t.HomSpaceCharacterization}, tells us that the homogeneous space we consider here is determined by a closed subgroup $H$ of $G$ as its isotropy subgroup. We first do not specify the subgroup $H$  and present our result on hypoelliptic logarithmic Sobolev inequalities. 

Let $H$ be a closed subgroup of $\operatorname{SU}(2)\times \cdots\times \operatorname{SU}(2)$. By \cite[Proposition 10 in Chapter VII \S2]{BourbakiBookIntegrationII2004}, $H$ is unimodular too. We choose its bi-invariant Haar measure $\mu^H=\mu^H_R=\mu^H_L$ as the measure on $H$. From the construction of the homogeneous space, we see that $M=H\backslash G$ is a homogeneous space with a natural sub-Riemannian structure as described in Section~\ref{sec.SubRiemannianStructureonHomogoneousSpaces}.

We now study how the hypoelliptic logarithmic Sobolev constant of $M$ depends on its underlying geometry and the dimension.

\begin{theorem} \label{thm.LSIonHomogeneousSpaceIII}
There is a natural sub-Riemannian structure $\left(M,\mathcal{H}^{M},\langle\cdot,\cdot\rangle^{M}_{\mathcal{H}}\right)$ on $M$ induced by the transitive action by $\operatorname{SU}(2)\times \cdots\times \operatorname{SU}(2)$ and the logarithmic Sobolev inequality $LSI_C\left(M,\mathcal{H}^{M},\mathcal{D}\left(\mathcal{E}_{M}\right),\mu_t^{M}\right)$ holds with the constant $C\left(M,\mathcal{H}^{M},\mu_t^{M}\right)$. Moreover, we can choose
\begin{align*}
C\left(M,\mathcal{H}^{M},\mu_t^{M}\right)=C(\operatorname{SU}(2),t),
\end{align*}
which is the same as the constant for $\operatorname{SU}(2)$. Thus the logarithmic Sobolev constant $C\left(M,\mathcal{H}^{M},\mu_t^{M}\right)$ on the $M$ is independent of its dimension.
\end{theorem}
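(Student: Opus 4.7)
The plan is to combine two pieces that are already in hand: the tensorized logarithmic Sobolev inequality on the product group from Proposition~\ref{prop.LSITensorizationII}, and the general quotient-to-homogeneous-space transfer result from Theorem~\ref{thm.LSIonHomogeneousSpace} (equivalently, part~(4) of Theorem~\ref{thm.MainTheorem}). This is structurally identical to the proof of Theorem~\ref{thm.LSIonHomogeneousSpaceII} in the Heisenberg setting: tensorize up, then project down.

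First I would start from the base case, namely the LSI on a single copy of $\operatorname{SU}(2)$ with constant $C(\operatorname{SU}(2),t)$, which is the result of Baudoin--Bonnefont recalled just before Proposition~\ref{prop.LSITensorizationII}. Applying Proposition~\ref{prop.LSITensorizationII}, the hypoelliptic heat kernel measure on the product group $G=\operatorname{SU}(2)\times\cdots\times\operatorname{SU}(2)$ is the product $\mu_t^{\operatorname{SU}(2)}\otimes\cdots\otimes\mu_t^{\operatorname{SU}(2)}$, and the product LSI holds with constant $C(\operatorname{SU}(2),t)$ on the algebraic tensor product $C^{\infty}_c(\operatorname{SU}(2))\otimes\cdots\otimes C^{\infty}_c(\operatorname{SU}(2))$. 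Since this space is dense in $\mathcal{D}(\mathcal{E}_{\mu_t^G})$ and the form $\mathcal{E}_{\mu_t^G}$ is closed, Proposition~\ref{prop.LimitInLSI} promotes the inequality to all of $\mathcal{D}(\mathcal{E}_{\mu_t^G})$, and in particular it holds for all $f\in C^{\infty}_c(G)$ as required by the hypothesis of Theorem~\ref{thm.LSIonHomogeneousSpace}.

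Next I would feed this into the main result. Since $G$ is a connected Lie group (a product of connected compact Lie groups, hence unimodular and with bi-invariant Haar measure serving as the reference $\mu_R^G$), equipped with the product sub-Riemannian structure $(G,\mathcal{H},\langle\cdot,\cdot\rangle_{\mathcal{H}})$, Proposition~\ref{prop.SubRiemannianOnHomogeneousSpaces} immediately gives the natural induced sub-Riemannian structure $(M,\mathcal{H}^M,\langle\cdot,\cdot\rangle^M_{\mathcal{H}})$ on $M=H\backslash G$. Theorem~\ref{thm.LSIonHomogeneousSpace} then applies with $C(G,\mathcal{H}^G,\mu_t^G)=C(\operatorname{SU}(2),t)$ and transfers the inequality to $M$ with the constant preserved, via the quotient map $\pi\colon G\to M$, the isometry \eqref{eqn.L^2NormRelation}, the Dirichlet-form identity \eqref{eqn.DirichletFormRelationII}, and the change-of-variables formula for the pushforward $\mu_t^M=\pi_{\sharp}\mu_t^G$.

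The main obstacle is essentially absent: all serious analytic work has been done upstream (the single-factor LSI, the tensorization argument, and the quotient transfer). The only routine verifications are that the closed subgroup $H$ inherits unimodularity from $G$ so that a bi-invariant Haar measure on $H$ exists as a reference (cited from \cite[Proposition 10, Chapter VII \S2]{BourbakiBookIntegrationII2004}), and that the tensor algebra $C^{\infty}_c(\operatorname{SU}(2))^{\otimes n}$ is dense in $\mathcal{D}(\mathcal{E}_{\mu_t^G})$ so that Proposition~\ref{prop.LimitInLSI} can be invoked. Once those are noted, the proof is a one-line application: the dimension-independence of the constant on $G$ persists on $M$ because the quotient step in Theorem~\ref{thm.LSIonHomogeneousSpace} does not depend on $H$ at all.
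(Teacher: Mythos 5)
Your proposal is correct and follows essentially the same route as the paper: the paper presents Theorem~\ref{thm.LSIonHomogeneousSpaceIII} as an immediate consequence of the tensorized inequality in Proposition~\ref{prop.LSITensorizationII} (extended from the dense algebraic tensor product to $\mathcal{D}\left(\mathcal{E}_{\mu_t^G}\right)$ by closedness) combined with the quotient transfer of Theorem~\ref{thm.LSIonHomogeneousSpace}, which is exactly the two-step argument you give.
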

For the rest of this section, we consider some concrete examples of homogeneous spaces. We specify their isotropy subgroups as a closed Lie subgroup $H$ of $G$ and then identify them with a quotient space. This enables us to prove a dimension-independent logarithmic Sobolev inequality on these spaces.

\subsection{Hopf fibration} \label{sec.HopfFibration}

The Hopf fibration is another important example, for more details we refer to \cite{BaudoinDemniWangNotes2022}. We use the Hopf fibration  to describe the complex projective space $\mathbb{C}\mathbb{P}^1$ as a homogeneous space under the transitive action by $\operatorname{SU}(2)$. In the rest of this section, we identify this homogeneous space with a quotient space and present the corresponding hypoelliptic logarithmic Sobolev inequality.

Let $G=\operatorname{SU}(2)\cong \mathbb{S}^3$ and 
\[
H=\operatorname{U}(1)=
\left\{\begin{pmatrix}
e^{i\theta} & 0 \\ 0 & e^{-i\theta}
\end{pmatrix}:\theta\in \mathbb{R}\right\}=\operatorname{exp}\left(\operatorname{Span}\{Z\}\right)\cong \mathbb{S}^1.
\]  
Then $H$ is not a normal subgroup of $G$. The left action of $\operatorname{U}(1)$ on $\operatorname{SU}(2)$ 
\begin{align*}
& H\times G \rightarrow G
\\
&
\left(\begin{pmatrix}
e^{i\theta} & 0 \\ 0 & e^{-i\theta}
\end{pmatrix},\begin{pmatrix}
z_1 & z_2 \\
-\overline{z_2} & \overline{z_1}
\end{pmatrix}\right) \mapsto \begin{pmatrix}
e^{i\theta}z_1 & e^{-i\theta}z_2 \\
-e^{i\theta}\overline{z_2} & e^{-i\theta}\overline{z_1}
\end{pmatrix}
\end{align*}
is proper but not transitive. This action is isometric in the sub-Riemannian sense, that is, under the action $H$, an orthonormal frame is mapped to an orthonormal frame of $\left(\operatorname{SU}(2),\mathcal{H},\langle\cdot,\cdot\rangle_{\mathcal{H}}^{\operatorname{SU}(2)}\right)$.

The homogeneous space $M$ is no 
longer a Lie group but it is topologically isomorphic to the complex projective space $\mathbb{C}\mathbb{P}^1$. For simplicity, we will identify $M$ with $\mathbb{C}\mathbb{P}^1$ for the rest of this section. Also, we have the following smooth map
\begin{align}
& \varphi:\operatorname{SU}(2) \rightarrow \mathbb{C}\mathbb{P}^1\cong M \notag
\\
& \varphi(g):=\varphi(z_1, z_2)=\left(2z_1\overline{z_2},\vert z_1\vert^2-\vert z_2\vert^2\right). \notag
\end{align}
Moreover, $\left(\mathbb{C}\mathbb{P}^1,\mathcal{H}^{\mathbb{C}\mathbb{P}^1},\langle\cdot,\cdot\rangle^{\mathbb{C}\mathbb{P}^1}_{\mathcal{H}}\right)$ is a Riemannian manifold, which is a special case of sub-Riemannian manifolds, and $\Delta_{\mathcal{H}}^{\mathbb{C}\mathbb{P}^1}$ is the Laplacian on $M \cong \mathbb{C}\mathbb{P}^1$. 

In this case, let  $\mu_t^{\operatorname{SU}(2)}$ be the hypoelliptic heat kernel measure on $\operatorname{SU}(2)$ associated to the sub-Laplacian $\Delta_{\mathcal{H}}^{\operatorname{SU}(2)}$ and $\mu_t^{M}$ the heat kernel measure $\mu_t^{\mathbb{C}\mathbb{P}^1}$ on $\mathbb{C}\mathbb{P}^1$ associated to the Laplacian on $M \cong \mathbb{C}\mathbb{P}^1$. Applying Theorem~\ref{thm.LSIonHomogeneousSpaceIII}, we obtain the following result.  This result also implies \cite[Corollary 4.5]{Gross1992}.

\begin{proposition}
On the complex projective space $\mathbb{C}\mathbb{P}^1$, there is a natural sub-Riemannian structure $\left(\mathbb{C}\mathbb{P}^1,\mathcal{H}^{\mathbb{C}\mathbb{P}^1},\langle\cdot,\cdot\rangle^{\mathbb{C}\mathbb{P}^1}_{\mathcal{H}}\right)$ induced by the transitive action by $SU(2)$, which indeed coincides with the original Riemannian structure on $\mathbb{C}\mathbb{P}^1$. Then $\operatorname{LSI}_C\left(\mathbb{C}\mathbb{P}^1, \mathcal{H}, \mathcal{D}\left(\mathcal{E}_{\mathbb{C}\mathbb{P}^1}\right), \mu_t^{\mathbb{C}\mathbb{P}^1}\right)$ holds with the logarithmic Sobolev constant $C(\operatorname{SU}(2),t)$, which is the same as the logarithmic Sobolev constant on $\operatorname{SU}(2)$.
\end{proposition}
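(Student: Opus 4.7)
The plan is to realize this statement as a direct specialization of Theorem~\ref{thm.LSIonHomogeneousSpaceIII} to the case $n=1$, with $G=\operatorname{SU}(2)$ and $H=\operatorname{U}(1)$. First, I would verify the hypotheses of that theorem in this concrete setting. The subgroup $\operatorname{U}(1)=\operatorname{exp}(\operatorname{Span}\{Z\})$ is a one-dimensional compact (hence closed) Lie subgroup of $\operatorname{SU}(2)$, so the homogeneous space characterization Theorem~\ref{t.HomSpaceCharacterization} identifies $M=\operatorname{U}(1)\backslash \operatorname{SU}(2)$ equivariantly with $\mathbb{CP}^1$ through the Hopf map $\varphi(z_1,z_2)=(2z_1\overline{z_2},|z_1|^2-|z_2|^2)$.

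Next I would apply Proposition~\ref{prop.SubRiemannianOnHomogeneousSpaces} to produce the natural sub-Riemannian structure $\left(\mathbb{CP}^1,\mathcal{H}^{\mathbb{CP}^1},\langle\cdot,\cdot\rangle^{\mathbb{CP}^1}_{\mathcal{H}}\right)$ induced by the transitive action. The key geometric observation is that the Lie algebra of $\operatorname{U}(1)$ is $\mathfrak{h}=\operatorname{Span}\{Z\}$, which is transverse to the horizontal subspace $\mathcal{H}=\operatorname{Span}\{X,Y\}\subseteq \mathfrak{su}(2)$. By Lemma~\ref{l.3.5}, $d\pi_g(\widetilde X)$ and $d\pi_g(\widetilde Y)$ are linearly independent at every point $\pi(g)\in\mathbb{CP}^1$, so $\mathcal{H}^{\mathbb{CP}^1}$ has constant rank equal to $\dim \mathbb{CP}^1 = 2$. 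Hence the induced structure is actually Riemannian, and the normalization $\{d\pi_g(\widetilde X),d\pi_g(\widetilde Y)\}$ orthonormal makes it the standard (round/Fubini-Study) metric on $\mathbb{CP}^1$ up to a scaling factor, since this metric is $\operatorname{SU}(2)$-invariant and $\operatorname{SU}(2)$ acts transitively and isometrically on the Fubini-Study $\mathbb{CP}^1$.

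The logarithmic Sobolev inequality on $\operatorname{SU}(2)$ with respect to $\mu_t^{\operatorname{SU}(2)}$ is known from \cite{BaudoinBonnefont2012} (as cited in the text) with constant $C(\operatorname{SU}(2),t)$. Feeding this into Theorem~\ref{thm.LSIonHomogeneousSpaceIII} (with $n=1$), or equivalently directly into the main Theorem~\ref{thm.LSIonHomogeneousSpace}, produces $\operatorname{LSI}_C(\mathbb{CP}^1,\mathcal{H}^{\mathbb{CP}^1},\mathcal{D}(\mathcal{E}_{\mathbb{CP}^1}),\mu_t^{\mathbb{CP}^1})$ with constant $C(\operatorname{SU}(2),t)$. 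One also has to check that the hypoelliptic heat kernel measure $\mu_t^{M}$ defined by pushforward in Definition~\ref{df.HeatKernelMeasureonQuotientSpace} coincides with the heat kernel measure of the Laplace--Beltrami operator on $\mathbb{CP}^1$; this follows from the intertwining identity \eqref{eqn.LaplacianRelation} between $\Delta_{\mathcal{H}}^{\operatorname{SU}(2)}$ and $\Delta_{\mathcal{H}}^{\mathbb{CP}^1}$ together with the uniqueness of the solution to the heat equation established in Section~\ref{sec.HeatSemigrouponHomogeneousSpaces}.

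The main obstacle, as so often in this line of arguments, is the identification of the induced structure with the classical one, namely confirming that the horizontal distribution on $\operatorname{SU}(2)$ is transverse to $\mathfrak{h}$ so that $d\pi$ produces a genuine Riemannian metric rather than a degenerate sub-Riemannian one, and that the resulting metric agrees (up to normalization) with the standard Fubini--Study metric on $\mathbb{CP}^1$. Once this is in place the logarithmic Sobolev inequality is an immediate transfer via the pushforward machinery already developed, and in particular the resulting constant is dimension-free in the sense of being the constant from $\operatorname{SU}(2)$ itself.
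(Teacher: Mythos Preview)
Your proposal is correct and follows essentially the same approach as the paper: the paper simply sets up $G=\operatorname{SU}(2)$, $H=\operatorname{U}(1)$, identifies $M\cong\mathbb{CP}^1$ via the Hopf map, notes that the induced structure is Riemannian with $\Delta_{\mathcal{H}}^{\mathbb{CP}^1}$ the Laplacian, and then states that the proposition follows by applying Theorem~\ref{thm.LSIonHomogeneousSpaceIII}. Your write-up is in fact more detailed than the paper's, as you spell out the transversality of $\mathfrak{h}=\operatorname{Span}\{Z\}$ to $\mathcal{H}=\operatorname{Span}\{X,Y\}$ via Lemma~\ref{l.3.5} and the identification of $\mu_t^M$ with the Laplace--Beltrami heat kernel measure, both of which the paper leaves implicit.
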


\subsection{$\operatorname{SO}(3)$} \label{sec.ExampleSO(3)}
The special orthogonal group $\operatorname{SO}(3)$ is a homogeneous space under the transitive action of $\operatorname{SU}(2)$. For the rest of this section, we describe this homogeneous space as a quotient space and present the corresponding hypoelliptic logarithmic Sobolev inequality. 

Consider $G=\operatorname{SU}(2)$ as in the previous section and $H=\mathbb{Z}_2$. The homogeneous space $M$ is a Lie group and it is isomorphic to $\operatorname{SO}(3)$. For simplicity, we will identify $M$ with $\operatorname{SO}(3)$ for the rest of this section. Also, $\mathfrak{su}(2)$ is isomorphic to the Lie algebra $\mathfrak{so}(3)$ of $\operatorname{SO}(3)$ (see \cite[Example 3.29]{HallLieBook}) and there exists a two-to-one and onto Lie group homomorphism $\varphi:\operatorname{SU}(2) \rightarrow \operatorname{SO}(3) \cong M$ (see \cite[Proposition 1.19]{HallLieBook}). This means that $\operatorname{SU}(2)$ is a universal cover of $\operatorname{SO}(3)$.

Using the approach in Section~\ref{sec.SubRiemannianStructureonHomogoneousSpaces}, we see that there is a natural sub-Riemannian structure $\left(\operatorname{SO}(3),\mathcal{H}^{\operatorname{SO}(3)},
\langle\cdot,\cdot\rangle^{\operatorname{SO}(3)}_{\mathcal{H}}\right)$ on $\operatorname{SO}(3)$. For more details, we refer to \cite[Section 7.7.2]{AgrachevBarilariBoscainBook2020}. Let  $\mu_t^{\operatorname{SU}(2)}$ be the hypoelliptic heat kernel measure on $\operatorname{SU}(2)$ associated to the sub-Laplacian $\Delta_{\mathcal{H}}^{\operatorname{SU}(2)}$ and $\mu_t^{M}$ the hypoelliptic heat kernel measure $\mu_t^{\operatorname{SO}(3)}$ on $\operatorname{SO}(3)$ associated to the sub-Laplacian $\Delta_{\mathcal{H}}^{\operatorname{SO}(3)}$ on $\operatorname{SO}(3)$.
Then Theorem~\ref{thm.LSIonHomogeneousSpaceIII} implies the following result. 

\begin{proposition}
The logarithmic Sobolev inequality holds on the sub-Riemannian manifold $\left(\operatorname{SO}(3),\mathcal{H}^{\operatorname{SO}(3)},
\langle\cdot,\cdot\rangle^{\operatorname{SO}(3)}_{\mathcal{H}}\right)$ with the logarithmic Sobolev constant $C(\operatorname{SU}(2),t)$, which is the same as the logarithmic Sobolev constant on $\operatorname{SU}(2)$.
\end{proposition}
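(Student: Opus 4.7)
The plan is to verify that this statement is a direct application of Theorem~\ref{thm.LSIonHomogeneousSpaceIII} in the case $n=1$, with the specific closed subgroup $H = \mathbb{Z}_2 = \{I, -I\} \subseteq \operatorname{SU}(2)$. First, I would check the hypotheses: $\operatorname{SU}(2)$ is equipped with the left-invariant sub-Riemannian structure $\left(\operatorname{SU}(2),\mathcal{H},\langle\cdot,\cdot\rangle_{\mathcal{H}}\right)$ described at the beginning of Section~\ref{sec.ExampleSU(2)Type}, and $\mathbb{Z}_2$ is a (discrete, hence) closed subgroup of $\operatorname{SU}(2)$, so Theorem~\ref{t.HomSpaceCharacterization} identifies the homogeneous space $M = \mathbb{Z}_2 \backslash \operatorname{SU}(2)$ with $\operatorname{SO}(3)$ as already recalled.

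Next, I would invoke Proposition~\ref{prop.SubRiemannianOnHomogeneousSpaces} to obtain the induced natural sub-Riemannian structure $\left(\operatorname{SO}(3),\mathcal{H}^{\operatorname{SO}(3)},\langle\cdot,\cdot\rangle^{\operatorname{SO}(3)}_{\mathcal{H}}\right)$. Because the two-to-one Lie group homomorphism $\varphi\colon\operatorname{SU}(2)\to\operatorname{SO}(3)$ is a local diffeomorphism, the differential $d\varphi$ sends the frame $\{\widetilde{X},\widetilde{Y}\}$ to linearly independent vectors at every point, so $\operatorname{SO}(3)$ has constant rank, and the induced distribution is the usual left-invariant rank-two distribution on $\operatorname{SO}(3)$ associated with the Pauli matrices $X,Y$. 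Correspondingly, $\Delta_{\mathcal{H}}^{\operatorname{SO}(3)}$ is the standard sub-Laplacian on $\operatorname{SO}(3)$, and $\mu_t^{\operatorname{SO}(3)}$ is its hypoelliptic heat kernel measure.

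Since the logarithmic Sobolev inequality with respect to $\mu_t^{\operatorname{SU}(2)}$ holds on $\operatorname{SU}(2)$ with constant $C(\operatorname{SU}(2),t)$ by \cite[Theorem 1.4]{BaudoinBonnefont2012}, all the assumptions of Theorem~\ref{thm.LSIonHomogeneousSpaceIII} (equivalently, Theorem~\ref{thm.LSIonHomogeneousSpace}) are satisfied. Applying that theorem with $G = \operatorname{SU}(2)$ and $H = \mathbb{Z}_2$, I would conclude that $LSI_C(\operatorname{SO}(3),\mathcal{H}^{\operatorname{SO}(3)},\mathcal{D}(\mathcal{E}_{\operatorname{SO}(3)}),\mu_t^{\operatorname{SO}(3)})$ holds with constant
\[
C\left(\operatorname{SO}(3),\mathcal{H}^{\operatorname{SO}(3)},\mu_t^{\operatorname{SO}(3)}\right) = C(\operatorname{SU}(2),t),
\]
which is exactly the claim.

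Since everything reduces to unpacking an already-proved theorem, there is essentially no substantive obstacle. The only point that might deserve explicit comment is the fact that $H = \mathbb{Z}_2$ is discrete rather than a positive-dimensional Lie subgroup. However, Section~\ref{sec.HomogeneousSpaces} was set up precisely to allow this situation (following the remark that the case $k = 0$ is permitted and that discrete actions are automatically proper by \cite[p.~557]{LeeBook2003SmoothManifold}), so no additional verification is needed beyond noting that the quotient map $\varphi$ coincides with the covering map $\pi\colon\operatorname{SU}(2)\to \mathbb{Z}_2\backslash\operatorname{SU}(2) \cong \operatorname{SO}(3)$.
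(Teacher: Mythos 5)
Your proposal is correct and follows essentially the same route as the paper: identify $\operatorname{SO}(3)$ with $\mathbb{Z}_2\backslash\operatorname{SU}(2)$ via Theorem~\ref{t.HomSpaceCharacterization}, equip it with the induced sub-Riemannian structure from Proposition~\ref{prop.SubRiemannianOnHomogeneousSpaces}, and apply Theorem~\ref{thm.LSIonHomogeneousSpaceIII} with $G=\operatorname{SU}(2)$ and the discrete closed subgroup $H=\mathbb{Z}_2$. Your added remarks on the covering map being a local diffeomorphism and on discreteness of $H$ being covered by the general framework are consistent with, though slightly more explicit than, the paper's own discussion.
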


\subsection{$\operatorname{SO}(4)$} \label{sec.ExampleSO(4)}

The special orthogonal group $\operatorname{SO}(4)$ is a homogeneous space under the transitive action of $\operatorname{SU}(2)\times \operatorname{SU}(2)$. In the rest of this section, we describe this homogeneous space as a quotient space and present the corresponding hypoelliptic logarithmic Sobolev inequality. 

Consider $G=\operatorname{SU}(2)\times \operatorname{SU}(2)$ and $H=\mathbb{Z}_2$, then the homogeneous space $M$ is a Lie group and it is isomorphic to $\operatorname{SO}(4)$. This means that $\operatorname{SU}(2)\times \operatorname{SU}(2)$ is a universal cover of $\operatorname{SO}(4)$. For simplicity, we will identify $M$ with $\operatorname{SO}(4)$. From Section~\ref{sec.SubRiemannianStructureonHomogoneousSpaces}, there is a natural sub-Riemannian structure $\left(\operatorname{SO}(4),\mathcal{H}^{\operatorname{SO}(4)},\langle\cdot,\cdot\rangle^{\operatorname{SO}(4)}_{\mathcal{H}}\right)$ on $\operatorname{SO}(4)$.

In this case, $\mu_t^M$ is the hypoelliptic heat kernel measure $\mu_t^{\operatorname{SO}(4)}$ on $\operatorname{SO}(4)$ associated to the sub-Laplacian $\Delta_{\mathcal{H}}^{\operatorname{SO}(4)}$ on $\operatorname{SO}(4)$. Applying Theorem~\ref{thm.LSIonHomogeneousSpaceIII}, we  obtain the following result.

\begin{proposition}
There is a natural sub-Riemannian structure on $\operatorname{SO}(4)$ induced by the transitive action by $\operatorname{SU}(2) \times \operatorname{SU}(2)$ and the logarithmic Sobolev inequality holds with the logarithmic Sobolev constant $C(\operatorname{SU}(2),t)$, which is the same as the logarithmic Sobolev constant on $\operatorname{SU}(2)$.
\end{proposition}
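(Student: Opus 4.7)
The plan is to apply Theorem~\ref{thm.LSIonHomogeneousSpaceIII} with $G = \operatorname{SU}(2)\times \operatorname{SU}(2)$ and $H = \mathbb{Z}_2$, so the main work is to verify the identification $M = H\backslash G \cong \operatorname{SO}(4)$ and confirm that the machinery of Section~\ref{sec.HomogeneousSpaces} applies. First I would recall the classical double cover homomorphism $\Phi: \operatorname{SU}(2)\times \operatorname{SU}(2) \to \operatorname{SO}(4)$, constructed by letting $(q_1, q_2)\in \operatorname{SU}(2)\times \operatorname{SU}(2)$ act on $\mathbb{H}\cong \mathbb{R}^4$ by $x \mapsto q_1 x q_2^{-1}$ using the quaternionic identification $\operatorname{SU}(2)\cong \operatorname{Sp}(1)$. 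This map is a surjective Lie group homomorphism with kernel $\{\pm(I,I)\}\cong \mathbb{Z}_2$, which is a closed (discrete, hence normal) subgroup of $\operatorname{SU}(2)\times \operatorname{SU}(2)$. Consequently $\Phi$ descends to a Lie group isomorphism between $\mathbb{Z}_2\backslash (\operatorname{SU}(2)\times \operatorname{SU}(2))$ and $\operatorname{SO}(4)$, which allows us to invoke Theorem~\ref{t.HomSpaceCharacterization} to identify $\operatorname{SO}(4)$ equivariantly with this coset space.

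Having made this identification, I would apply Proposition~\ref{prop.SubRiemannianOnHomogeneousSpaces} to equip $\operatorname{SO}(4)$ with the natural sub-Riemannian structure $\left(\operatorname{SO}(4),\mathcal{H}^{\operatorname{SO}(4)},\langle\cdot,\cdot\rangle^{\operatorname{SO}(4)}_{\mathcal{H}}\right)$ induced by the transitive action of $\operatorname{SU}(2)\times \operatorname{SU}(2)$, whose horizontal distribution is the pushforward under $d\pi$ of the product horizontal distribution on $\operatorname{SU}(2)\times \operatorname{SU}(2)$. Because $H = \mathbb{Z}_2$ is discrete of dimension zero, the differential $d\pi_g$ is a linear isomorphism on each $T_g G$, so the pushforward horizontal distribution has constant rank $4$ and the pushforward metric is well-defined; in particular the resulting sub-Laplacian $\Delta^{\operatorname{SO}(4)}_{\mathcal{H}}$ is a bona fide hypoelliptic operator via \eqref{df.SubLaplacianonQuotientSpace} and \eqref{eqn.LaplacianRelation}. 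The hypoelliptic heat kernel measure $\mu_t^{\operatorname{SO}(4)}$ is then defined as in Definition~\ref{df.HeatKernelMeasureonQuotientSpace}.

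Next I would recall Proposition~\ref{prop.LSITensorizationII}, which by the tensorization argument yields $LSI_C$ on $\operatorname{SU}(2)\times \operatorname{SU}(2)$ with respect to $\mu_t^G = \mu_t^{\operatorname{SU}(2)}\otimes \mu_t^{\operatorname{SU}(2)}$ with a constant that can be chosen to be $C(\operatorname{SU}(2),t)$, independent of the number of factors. Together with the identification above, the hypotheses of Theorem~\ref{thm.LSIonHomogeneousSpaceIII} are satisfied in full, so Theorem~\ref{thm.LSIonHomogeneousSpaceIII} delivers $LSI_C(\operatorname{SO}(4),\mathcal{H}^{\operatorname{SO}(4)},\mathcal{D}(\mathcal{E}_{\operatorname{SO}(4)}),\mu_t^{\operatorname{SO}(4)})$ with
\[
C\left(\operatorname{SO}(4),\mathcal{H}^{\operatorname{SO}(4)},\mu_t^{\operatorname{SO}(4)}\right) = C(\operatorname{SU}(2),t),
\]
matching the constant for $\operatorname{SU}(2)$.

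The only subtle point, and the one I would spend the most care on, is the verification that $\mathbb{Z}_2$ embedded as $\{\pm(I,I)\}$ really is the correct isotropy subgroup so that the induced sub-Riemannian structure on the quotient coincides with the usual left-invariant one on $\operatorname{SO}(4)$ that is implicit in statements like \cite[Section 7.7.2]{AgrachevBarilariBoscainBook2020}; this is straightforward since $\pm(I,I)$ acts trivially on $\mathbb{H}$ and both vectors in the horizontal frame on $\operatorname{SU}(2)\times \operatorname{SU}(2)$ are left-invariant, so the quotient metric is automatically left-invariant and $\operatorname{SU}(2)\times \operatorname{SU}(2)$-invariant. Everything else is a direct application of the general theorem, and the dimension-independence of the constant is inherited from Proposition~\ref{prop.LSITensorizationII} without additional work.
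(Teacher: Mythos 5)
Your proposal is correct and follows essentially the same route as the paper: identify $\operatorname{SO}(4)$ with $\mathbb{Z}_2\backslash(\operatorname{SU}(2)\times\operatorname{SU}(2))$ via the standard double cover, equip it with the induced sub-Riemannian structure from Section~\ref{sec.SubRiemannianStructureonHomogoneousSpaces}, and apply Theorem~\ref{thm.LSIonHomogeneousSpaceIII} together with the tensorized inequality of Proposition~\ref{prop.LSITensorizationII}. The paper treats this as an immediate corollary and omits the explicit quaternionic construction of the covering homomorphism and the identification of the kernel as $\{\pm(I,I)\}$, which you supply as useful but standard detail.
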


\begin{acknowledgement}
The  authors would like to thank Fabrice~Baudoin and Tai~Melcher for helpful discussions during the preparation of this work.
\end{acknowledgement}

\bibliographystyle{amsplain}
\providecommand{\bysame}{\leavevmode\hbox to3em{\hrulefill}\thinspace}
\providecommand{\MR}{\relax\ifhmode\unskip\space\fi MR }
\providecommand{\MRhref}[2]{%
  \href{http://www.ams.org/mathscinet-getitem?mr=#1}{#2}
}
\providecommand{\href}[2]{#2}

\end{document}